% AMS-LaTeX document
%
% A newcomer's guide to zeta functions of groups and rings
% by C. Voll
%

\documentclass[11pt,leqno,draft]{amsart}
%\usepackage[official]{eurosym}

%include showkeys to display label references in output

\usepackage{array,float,graphicx}%,epic,eepic, showkeys}
\usepackage{amssymb, amsmath, amsthm, mathrsfs}%, showkeys}

% page dimens
\addtolength{\textheight}{+0.2cm}
%\addtolength{\textwidth}{+0.2cm}
% \addtolength{\topmargin}{-0.5cm}
% \addtolength{\footskip}{0m}
%\addtolength{\oddsidemargin}{-1.cm}
%\addtolength{\evensidemargin}{-0cm}
%\addtolength{\marginparwidth}{-1cm}
 
%\setlength{\extrarowheight}{0.05cm}

% theorems etc
% these are the numbering conventions from our TAMS-paper:

\newtheorem{theorem}{Theorem}[section] 

\newtheorem{proposition}[theorem]{Proposition} 
\newtheorem{lemma}[theorem]{Lemma}

\theoremstyle{definition}
\newtheorem{definition}[theorem]{Definition}
\newtheorem{example}[theorem]{Example}
\newtheorem{problem}[theorem]{Problem}
\newtheorem{conjecture}[theorem]{Conjecture}
\newtheorem{exercise}{Exercise}

\theoremstyle{remark}
\newtheorem{remark}[theorem]{Remark}

\numberwithin{equation}{section}

\newtheorem*{acknowledgements}{Acknowledgements}

\def \ol {\overline}
\def \Q {\ensuremath{\mathbb{Q}}}
\def \C {\ensuremath{\mathbb{C}}}

\def \N {\ensuremath{\mathbb{N}}}

\def \mcE {\ensuremath{\mathcal{E}}}
\def \mcM {\ensuremath{\mathcal{M}}}

\def \mcR {\ensuremath{\mathcal{R}}}
\def \mcS {\ensuremath{\mathcal{S}}}
\def \mcU {\ensuremath{\mathcal{U}}}

\def \O {\mathcal{O}}

\def \Qp {\mathbb{Q}_p}
\def \Z {\mathbb{Z}}

\def \Zp  {\mathbb{Z}_p}

\def \Fp  {\mathbb{F}_p}
\def \Fq  {\mathbb{F}_q}

\def \bff {{\bf f}}

\def \bfG {{\bf G}}

\def \bfl {{\bf \ell}}
\def \bfm {{\bf m}}

\def \bfn {{\bf n}}
\def \mcN {\ensuremath{\mathcal{N}}}

\def \bfr {{\bf r}}

\def \bfX {{\bf X}}
\def \bfx {{\bf x}}
\def \bfY {{\bf Y}}
\def \bfy {{\bf y}}
\def \bfY {{\bf Y}}
\def \bfbeta {\boldsymbol{\beta}}

\def \L {\Lambda}
\def \GL {\text{GL}}

\def \wt {\widetilde}

\def \mcI {\mathcal{I}}

\def \zirr {\zeta^{\text{irr}}}

\def \T {\mathcal{T}}

\def \zirr {\zeta^{\rm{irr}}}
\def \G {\Gamma}

\def \tl {\triangleleft}

\def \nl {\triangleleft}
\def \sltwo {\mathfrak{sl}_2}
\def \SLn {\text{SL}_n}
\def \trnzp {\text{Tr}(n,\Zp)}
\def \trthreezp {\text{Tr}(3,\Zp)}
\def \ul {\underline}

\def \tuL {\textup{L}}
\def \bsalpha {\ensuremath{\boldsymbol{\alpha}}}
\def \bsbeta {{\boldsymbol{\beta}}}
\def \bsgamma {{\boldsymbol{\gamma}}}
\def \bsdelta {{\boldsymbol{\delta}}}

% Move %-sign down/up to make marginal notes disappear/appear: 

%\def \m {\marginpar}

%------------------------------------------------------------------------
%------------------------------------------------------------------------

\begin{document}

\title[Zeta functions of groups and
rings]{A newcomer's guide to zeta functions of groups and rings}

\date{\today} \author{Christopher Voll} \address{Email: C.Voll.98 at
cantab.net } \address{School of Mathematics, University of
Southampton, Highfield, SO17 1BJ, United Kingdom.}

\keywords{Subgroup growth, representation growth, nilpotent groups,
$p$-adic integration, Kirillov theory, Igusa's local zeta function,
local functional equations}

\begin{abstract}
These notes grew out of lectures given at the LMS\--EPSRC Short Course
on \emph{Asymptotic Methods in Infinite Group Theory}, University of
Oxford, 9-14 September 2007, organised by Dan Segal.
\end{abstract}

\maketitle

\tableofcontents

\section{Introduction}
\subsection{Zeta functions of nilpotent groups}
A finitely generated group $G$ has only finitely many subgroups of
each finite index. The zeta function of such a group is the Dirichlet
generating function encoding these numbers. If, for $m\in \N$, there
are $a_m=a_m(G)$ subgroups of index $m$ in $G$, the zeta function of
$G$ is defined as
\begin{equation}
\zeta_{G}(s):=\sum_{m=1}^\infty a_mm^{-s}=\sum_{H\leq_f
G}|G:H|^{-s}. \label{zeta definition}
\end{equation}
Here $s$ is a complex variable. Zeta functions were introduced into
infinite group theory as tools to study groups of \emph{polynomial
subgroup growth}. In fact, it is well-known that a series like
\eqref{zeta definition} converges on the complex right-half plane
$\{s\in\C|\;\mathfrak{Re}(s)>\alpha\}$ if and only if the numbers
$$s_m:=s_m(G):=\sum_{i\leq m}a_i$$ grow at most polynomially of degree
$\alpha$, i.e.~$s_m=O(1+m^\alpha)$. We therefore call
\begin{equation}\label{alpha definition}
\alpha_G:=\inf\{\alpha|\;\exists c>0\;\forall m:\;\sum_{i\leq
m}a_i<c(1+m^\alpha)\}
\end{equation}
the abscissa of convergence of $\zeta_G(s)$.\footnote{We have chosen
  this definition of $\alpha_G$ to ensure that $\alpha_G=-\infty$ if
  $G$ is finite.} We call groups with this property groups with
\emph{polynomial subgroup growth (PSG)}. The subgroup growth of any
group $G$ is the same as the subgroup growth of $G/R(G)$, where
$R(G):=\bigcap_{N\triangleleft_fG}N$, the finite residual of $G$, is
the intersection of the group's normal subgroups of finite index. In
studying subgroup growth, we may thus assume without loss of
generality that the group $G$ is residually finite, i.e.~that its
finite residual is trivial. Finitely generated, residually finite
groups of polynomial subgroup growth have been characterised as the
virtually soluble groups of finite
rank~(\cite{LubotzkyMannSegal/93}). This class of groups includes the
class of finitely generated, torsion-free nilpotent (or
$\T$-)groups. It was this class of PSG-groups for which zeta functions
were first introduced as a means to study asymptotic and arithmetic
aspects of subgroup growth (cf.~\cite{GSS/88}).

Let $G$ be a $\T$-group. It is not difficult to see that, owing to the
nilpotency of $G$, the zeta function $\zeta_G(s)$ has an Euler factorisation
\begin{equation}\label{euler zeta}
\zeta_G(s)=\prod_{p \text{ prime}}\zeta_{G,p}(s)
\end{equation}
into local (or Euler) factors $\zeta_{G,p}(s):=\sum_{i=0}^\infty
a_{p^i}p^{-is}$, indexed by the primes~$p$, enumerating subgroups of
$p$-power index. This generalises the familiar Euler product
decomposition satisfied by the Riemann zeta function
\begin{equation}\label{euler riemann}
\zeta(s):=\sum_{m=1}^\infty m^{-s}=\prod_{p\text{ prime}}\zeta_p(s),
\end{equation}
where $\zeta_p(s):=\frac{1}{1-p^{-s}}$. While~\eqref{euler riemann}
reflects the Fundamental Theorem of Arithmetic that every positive
integer can be written as the product of prime powers in an
essentially unique way, the identity~\eqref{euler zeta} reflects the
fact that every \emph{finite} nilpotent group is the direct product of
its Sylow $p$-subgroups. In fact, $\zeta(s)$ is the zeta function of
the infinite cyclic group, making~\eqref{euler riemann} a special case
of~\eqref{euler zeta}. Indeed, it is well known that, for all
$n\in\N$, there is a unique subgroup of index $n$ in $\Z$, namely
$n\Z$. It is instructive to see how this generalises to abelian groups
of higher rank.

\begin{example}\label{example abelian}
For $n\in\N$, let $\Z^n$ be the free abelian group of rank $n$. Then
\begin{equation}\label{formula abelian}
 \zeta_{\Z^n}(s)=\zeta(s)\zeta(s-1)\cdots\zeta(s-(n-1)).
\end{equation}
The monograph~\cite{LubotzkySegal/03} contains no fewer than five
proofs of this beautiful formula. We will add another, new one, in
Section~\ref{subsection local funeqs}. We observe that this formula
allows us to give precise asymptotic information about the numbers
$s_m(\Z^n)$ of subgroups of index at most~$m$ in $\Z^n$. Indeed, one
can deduce from \eqref{formula abelian} that
$$s_m(\Z^n)\sim n^{-1}\zeta(n)\zeta(n-1)\dots\zeta(2)m^n\quad\text{ as
}m\rightarrow\infty.$$ For example, using the identity
$\zeta(2)=\pi^2/6$, we see that
$$s_m(\Z^2)\sim (\pi^2/12)\,m^2\quad\text{ as }m\rightarrow\infty.$$
%It would have been difficult to obtain this result without the tool of
%zeta functions of groups.
\end{example}

\subsection{Zeta functions of rings} 
We will see in Section~\ref{section linearisation} below that the
study of zeta functions of nilpotent groups may -- at least to a
certain extent -- be reduced to the study of zeta functions of
suitable rings. By a \emph{ring} we mean an additive group of finite
rank, carrying a bi-additive product, not necessarily commutative or
associative. Given a ring $L$, its (subring) zeta function is defined
as the Dirichlet generating series
$$\zeta_L(s)=\sum_{m=1}^\infty b_mm^{-s}=\sum_{H\leq_fL}|L:H|^{-s},$$
where, for $m\in\N$, $b_m=b_m(L)$ denotes the number of subrings of
index~$m$ in $L$ and $s$ is again a complex variable. By properties of
the underlying additive group of $L$ alone (essentially the Chinese
Remainder Theorem), this zeta function also satisfies an Euler product
decomposition
\begin{equation}\label{euler rings}
\zeta_{L}(s)=\prod_{p\text{ prime}}\zeta_{L,p}(s)
\end{equation}
into Euler factors $\zeta_{L,p}(s):=\sum_{i=0}^\infty b_{p^i}p^{-is}$,
enumerating subrings of finite $p$-power index. It is worth pointing
out that, for each prime $p$, the Euler factor $\zeta_{L,p}(s)$ is the
zeta function of the $\Zp$-algebra $L_p:=L\otimes\Zp$, where $\Zp$ is
the ring of $p$-adic integers.

In the study of nilpotent groups, nilpotent Lie rings play an
important role (which motivated our choice of notation `$L$' for a
general ring). A \emph{Lie ring} is a finitely generated abelian group
with a bi-additive product $[\,,]$ (called `Lie-bracket') satisfying
the Jacobi-identity
$$\forall x,y,z\in L:\; [x,[y,z]] + [y,[z,x]] + [z,[x,y]] = 0 $$ and,
for all $x\in L$, $[x,x]=0$. The lower central series of $L$ is
defined inductively via $\gamma_1(L):=L$,
$\gamma_{i}(L):=[\gamma_{i-1}(L),L]$ for $i\geq2$. We say that a Lie
ring $L$ is nilpotent of class $c$ if $\gamma_{c+1}(L)=\{0\}$ but
$\gamma_{c}(L)\not=\{0\}$. For example, a Lie ring is nilpotent of
class~$1$ if and only if it is abelian, and nilpotent of class~$2$ if
and only if the derived ring is central, i.e.~if $L':=[L,L]\leq Z(L)$.

\begin{example} \label{example sl2}
  Let $\sltwo(\Z)$ be the Lie ring of traceless integral
  $2\times2$-matrices with Lie bracket $[x,y]:=xy-yx$. It has a
  $\Z$-basis consisting of the matrices
$$e:=\left(\begin{matrix}0&1\\0&0\end{matrix}\right),\quad
f:=\left(\begin{matrix}0&0\\1&0\end{matrix}\right), \quad
h:=\left(\begin{matrix}1&0\\0&-1\end{matrix}\right) $$ which satisfy
the relations $[h,e]=2e$, $[h,f]=-2f$, $[e,f]=h$. A non-trivial
computation shows that, for odd prime $p$,
$$\zeta_{\sltwo(\Z),p}(s)=\zeta_{\sltwo(\Zp)}(s)=\zeta_p(s)\zeta_p(s-1)\zeta_p(2s-1)\zeta_p(2s-2)\zeta_p(3s-1)^{-1}$$
whereas, for $p=2$,
$$\zeta_{\sltwo(\Z),2}(s)=\zeta_{\sltwo(\Z_2)}(s)=\zeta_2(s)\zeta_2(s-1)\zeta_2(2s-1)\zeta_2(2s-2)(1+6\cdot
2^{-2s}-8\cdot 2^{-3s}).$$ This was first proved
in~\cite{duSTaylor/02}. We will sketch an alternative proof in
Section~\ref{section 3D}.
\end{example}

\subsection{Linearisation}\label{section linearisation} 
Whilst it is possible to analyse the Euler factors of zeta functions
of nilpotent groups directly (\cite[Section 2]{GSS/88}), it is often
useful to exploit the fact that the study of subgroup growth of
nilpotent groups can be \emph{linearised}, i.e.~reduced to the study
of subring growth of suitable (nilpotent Lie) rings associated with
these groups. Let $G$ be a $\T$-group. The Malcev correspondence
assigns to $G$ a $\Q$-Lie algebra $\mathcal{L}=\mathcal{L}(G)$ which
contains a Lie subring $L=L(G)$. The dimension of $\mathcal{L}$ as a
$\Q$-vector space (and thus the torsion-free rank of $L$ as a
$\Z$-module) coincides with the \emph{Hirsch length} $h(G)$ of $G$,
the number of infinite cyclic factors in a polycyclic series for
$G$. It can also be shown that $L$ is nilpotent of class $c$, where
$c$ is the nilpotency class of $G$. It has the property that, for
almost all (i.e.~all but finitely many) primes $p$,
\begin{equation}\label{equation malcev subgroups}
 \zeta_{G,p}(s)=\zeta_{L,p}(s)
\end{equation} 
(see \cite[Section 4]{GSS/88} for details). 

The exclusion of a finite number of primes is a recurrent phenomenon
in the theory of zeta functions of nilpotent groups and of rings.

If $G$ is nilpotent of class $1$, i.e.~abelian, there is of course
nothing to do: we choose $(L,+)=(G,\cdot)$, with trivial ring
structure. If $G$ is nilpotent of class $2$, i.e.~if $G'\leq Z(G)$, we
may choose
\begin{equation}\label{definition lie ring class 2}
L:=Z(G)\oplus G/Z(G),
\end{equation}
with Lie bracket induced from taking commutators in the groups. It
satisfies the identities~\eqref{equation malcev subgroups} for all
primes, i.e.~$\zeta_{G}(s)=\zeta_{L}(s)$ in this case.

We illustrate the passage from nilpotent groups to nilpotent Lie rings
with an important and prototypical example and some of its
generalisations.

\begin{example}\label{example heisenberg}
The group $$G:=\left(\begin{matrix} 1&\Z&\Z\\0&1&\Z\\0&0&1
\end{matrix}\right)$$ is called the discrete Heisenberg group of
$3\times3$-upper-unitriangular matrices over the integers. It can
easily be seen to be nilpotent of class $2$ and of Hirsch length
$3$. In fact, its centre $Z(G)$ coincides with the derived group~$G'$,
which is the infinite cyclic subgroup generated by the matrix
$$\left(\begin{matrix}1&0&1\\0&1&0\\0&0&1\end{matrix}\right).$$ It is
not hard to see that the Lie ring $L$ constructed in~\eqref{definition
lie ring class 2} has a presentation
$$L=\langle x,y,z|\;[x,y]=z, [x,z]=[y,z]=e\rangle.$$ It can be shown
that
\begin{equation}\label{zeta heisenberg}
\zeta_{G}(s)=\zeta_{L}(s)=\zeta(s)\zeta(s-1)\zeta(2s-3)\zeta(2s-2)\zeta(3s-3)^{-1}.
\end{equation}
This was first proved in~\cite{GSS/88}. We will prove this in
Proposition~\ref{proposition heisenberg} and sketch another proof in
Section~\ref{section 3D}.
\end{example}

\begin{example}\label{example F23}
  The Heisenberg group has many aspects that may be generalised. For
  instance, it is the free nilpotent group of nilpotency class $2$ on
  two generators. In general, given integers $c,d\geq2$, the free
  nilpotent group $F_{c,d}$ on $d$ generators and nilpotency class $c$
  may be defined as the quotient
  $$F_{c,d}:=F_d/\gamma_{c+1}(F_d)$$ of the free group $F_d$ on $d$
  letters by the $c+1$-th term of its lower central series. The groups
  $F_{2,d}$, for example, have a presentation
  $$F_{2,d}=\langle x_1,\dots,x_d,y_{11},y_{12},\dots,y_{d-1\,
  d}|\;[x_i,x_j]=y_{ij}, \text{ all other $[\,,]$ trivial}\rangle.$$
  The associated Lie rings $L_{2,d}$ have identical presentations.
  
Computing explicit formulae for zeta functions of groups is in general
  very difficult, even if the groups have quite a transparent
  structure. For the zeta functions $\zeta_{F_{c,d}}(s)$, explicit
  formulae are only known for the cases $(1,d)$
  (cf.~Example~\ref{example abelian}) and
  $(c,d)\in\{(2,2),(2,3),(3,2)\}$. For example
  (cf.~\cite[2.7.1]{duSautoyWoodward/08}),
\begin{align*}\zeta_{F_{2,3}}(s)=&\zeta_{\Z^3}(s)\zeta(2s-4)\zeta(2s-5)\zeta(2s-6)\zeta(3s-6)\cdot\\
&\zeta(3s-7)\zeta(3s-8)\zeta(4s-8)^{-1}\prod_{p \text{
prime}}W_{2,3}(p,p^{-s}),
\end{align*}
where 
\begin{align*}
W_{2,3}(X,Y)=&1+X^3Y^2+X^4Y^2+X^5Y^2-X^4Y^3-X^5Y^3\\&-X^6Y^3-X^7Y^4-X^9Y^4-X^{10}Y^5-X^{11}Y^5\\&-X^{12}Y^5+X^{11}Y^6+X^{12}Y^6+X^{13}Y^6+X^{16}Y^8.
\end{align*}
\end{example}

We have seen that the theory of zeta functions of nilpotent groups
can, to a great extent, be reduced to the study of the zeta functions
of nilpotent Lie rings. It is worth recalling, however, that the
theory of zeta functions of rings we are about to present applies to
much more general rings.

\bigskip
\subsection{Layout of the paper} 
In Section~\ref{section local} we study local and global aspects of
subring zeta functions of rings, reviewing some of the methods
available to study these functions. By the linearisation results
outlined in the introduction, these yield, as corollaries, theorems
about (generic local) zeta functions of $\T$-groups. We put particular
emphasis on connections with the theory of linear homogeneous
diophantine equations and on local functional equations.

Some of the manifold generalisations and variations of the concept of
the zeta function of a group or ring are reviewed in
Section~\ref{section variations}. We concentrate on ideal (or normal)
zeta functions of rings (or nilpotent groups, respectively) and
representation zeta functions of nilpotent, arithmetic and $p$-adic
analytic groups.

In Section~\ref{section open problems} we present a collection of what
we believe are major open questions and conjectures in the area.

\bigskip
We use the following notation.
\medskip

\begin{tabular}{l|l}
$\N$ & the set $\{1,2,\dots\}$ of natural numbers
  \\ $I=\{i_1,\dots,i_l\}_<$ & the finite set of natural numbers
  $i_1<\dots<i_l$\\ $I_0$ & the set $I\cup\{0\}$ for $I\subseteq\N$ \\
$[k]$ & the set $\{1,\dots,k\}$, $k\in\N$ \\ 
$S_n$ & the symmetric group on $n$ letters\\ $M^t$&
the transpose of a matrix $M$\\ 
$v_p$ & the $p$-adic valuation ($p$ a
prime)\\

$\Zp$ & the ring of $p$-adic integers\\
$\Qp$ & the field of $p$-adic numbers\\
$|x|_p$& the $p$-adic absolute value of a $p$-adic number,\\&defined by $|x|_p:=p^{-v_p(x)}$\\
$\|\mathcal{S}\|_p$& the $p$-adic absolute value of a set $\mathcal{S}$ of $p$-adic\\&numbers, defined by $\|\mcS\|_p:=\max\{|s|_p|\;s\in\mcS\}$\\
$[\Lambda]$& the homothety class $\Qp^*\Lambda$ of a (full) lattice
$\Lambda$ in $\Qp^n$\\
$\delta_P$ & the `Kronecker delta' which is equal to $1$ if\\ & the
property $P$ holds and equal to $0$ otherwise
\end{tabular}
\medskip

\noindent Given a set $\bff$ of polynomials and a polynomial $g$, we
write $g\bff$ for $\{gf|\,f\in\bff\}$, and $(\bff)$ for the ideal
generated by~$\bff$.

\section{Local and global zeta functions of groups and rings}\label{section local}
Let $L$ be a ring. Given equation~\eqref{euler rings}, the problem of
studying the zeta function $\zeta_L(s)$ is reduced to the problem of
understanding the Euler factors $\zeta_{L,p}(s)$, $p$ prime, and the
analytic properties of their Euler product. The following are natural
questions:
\begin{enumerate}
\item What do the local factors $\zeta_{L,p}(s)$ have in common?
What is their structure?
\item How do the local factors vary with the prime $p$?
\end{enumerate}
In the following subsections we will explore some of the existing
methods to analyse local zeta functions of rings, and will address
both of these questions.

\subsection{Rationality and variation with the prime}\label{subsection rat and uni}
In all the examples we have seen, the local factors all shared a
number of features. In particular, they were all rational functions in
the parameter $p^{-s}$. This is no coincidence:

\begin{theorem}\cite[Theorem 3.5]{GSS/88}\label{theorem
rationality} For all primes $p$, the local zeta function
$\zeta_{L,p}(s)$ is a rational function in $p^{-s}$, i.e.~there is a
rational function $W_p(Y)=P_p(Y)/Q_p(Y)\in\Q(Y)$ such that
$$W_p(p^{-s})=\zeta_{L,p}(s).$$
\end{theorem}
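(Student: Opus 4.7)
The plan is to express $\zeta_{L,p}(s)$ as a $p$-adic integral of Igusa type and appeal to a general rationality theorem for such integrals. Concretely, I would work over $L_p:=L\otimes\Zp$ and fix a $\Zp$-basis $e_1,\dots,e_n$ of $L_p$. Every $M\in M_n(\Zp)$ with $\det M\neq 0$ has rows spanning a finite-index sublattice $\Lambda_M\leq L_p$ with $|L_p:\Lambda_M|=p^{v_p(\det M)}$, and two matrices yield the same lattice iff they differ by left multiplication by an element of $\GL_n(\Zp)$. Since $\mu(\GL_n(\Zp))=\prod_{i=1}^n(1-p^{-i})$ under the additive Haar measure $\mu$ on $M_n(\Zp)\cong\Zp^{n^2}$, summing over full-rank sublattices yields
\begin{equation*}
\zeta_{L,p}(s)\;=\;\frac{1}{\prod_{i=1}^n(1-p^{-i})}\int_{\mcV_p}|\det M|_p^{s}\, d\mu(M),
\end{equation*}
where $\mcV_p\subseteq M_n(\Zp)$ denotes the set of full-rank matrices whose row-span is closed under the ring product of $L_p$.

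The next step is to show that $\mcV_p$ is a $\Qp$-definable (in particular, $p$-adic semi-algebraic) subset of $\Zp^{n^2}$. Writing the structure constants of $L$ in the basis $e_1,\dots,e_n$, the pairwise products of the rows of $M$ expand into polynomial vectors in the entries $m_{ij}$. The lattice $\Lambda_M$ is closed under multiplication iff each such product lies in the $\Zp$-span of the rows of $M$, equivalently iff $M^{-1}$ applied to it has entries in $\Zp$. Clearing denominators via Cramer's rule converts this into a finite list of polynomial divisibility conditions $P_\alpha(M)\in(\det M)\Zp$, which defines $\mcV_p$ by a first-order formula in the language of valued fields.

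Finally, I would invoke the rationality theorem of Denef which, via Macintyre's quantifier elimination for $\Qp$, asserts that every $p$-adic integral of the form $\int_{\mcV}|f(x)|_p^{s}\,|dx|$ with $\mcV$ a $\Qp$-definable set and $f$ a polynomial is a rational function of $p^{-s}$. Applied to our integral with $f=\det$, this produces a rational $W_p(Y)\in\Q(Y)$ satisfying $W_p(p^{-s})=\zeta_{L,p}(s)$, as required. The main obstacle is the middle step: explicitly expressing the subring closure condition as polynomial divisibilities and verifying that the resulting locus is definable. Once this translation is in place, the rationality conclusion follows from the general machinery for Igusa-type $p$-adic integrals.
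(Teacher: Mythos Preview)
Your approach is correct and is precisely the strategy of \cite[Theorem~3.5]{GSS/88}, which the present survey alludes to by saying that the proof ``uses deep results from the theory of $p$-adic integration''. One small slip: under the additive Haar measure on $M_n(\Zp)$ the coset $\GL_n(\Zp)M$ has measure $|\det M|_p^{\,n}\prod_{i=1}^n(1-p^{-i})$, so the integrand must be $|\det M|_p^{\,s-n}$ rather than $|\det M|_p^{\,s}$; this shift in $s$ of course does not affect the rationality conclusion. The paper's own exposition in Section~\ref{subsection counting} carries out the analogous computation with upper-triangular representatives in $\trnzp$ rather than full matrices, which changes the normalising constants but leads to the same definable-integral setup to which Denef's theorem applies.
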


The proof of this theorem uses deep results from the theory of
$p$-adic integration, which we survey to some degree below.

Theorem~\ref{theorem rationality} asserts that the sequence
$(b_{p^i}(L))$ of the numbers of subrings of $L$ of index $p^i$
satisfies a strong regularity property: it is easy to see that a
generating function of the form $\sum_{i=0}^\infty b_{p^i}t^i$ is
rational in the variable~$t$ if and only if there is a finite linear
recurrence relation on the coefficients $b_{p^i}$, the length of which
is determined by the degree of the denominator (cf.~\cite[Theorem
  4.1.1]{Stanley/97}). In other words, the numbers of finite index
subalgebras of $L_p$ are already determined by the numbers of
subalgebras in some finite quotient of~$L_p$.

\emph{A priori}, Theorem~\ref{theorem rationality} does not give us
any information on the shape of the rational functions $W_p$. In
particular, it does not tell us how the lengths of these recurrence
relations depend on the prime, or when they set in. In the examples
above we observe that the denominators are all of the form
$\prod_{i\in I}(1-p^{a_i-b_is})$ for suitable non-negative integers
$a_i,b_i$. This, too, is a general phenomenon.

\begin{theorem}\cite{duS/94}\label{theorem duS denominators}
For each $n\in\N$ there exists a finite index set $I_n$, and finitely
    many pairs $(a_i,b_i)_{i\in I_n}$ of natural numbers such that, if
    $L$ is a ring of additive rank $n$, for all primes $p$ the
    denominator polynomial $Q_p(Y)\in\Q[Y]$ in Theorem~\ref{theorem
    rationality} can be taken to divide $\prod_{i\in
    I_n}(1-p^{a_i}Y^{b_j})$.
\end{theorem}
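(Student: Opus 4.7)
The plan is to represent $\zeta_{L,p}(s)$ as a $p$-adic integral whose domain and integrand have combinatorial complexity bounded purely in terms of $n$, and then apply a uniform form of Denef's rationality theorem for such integrals.

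Following the integral representation developed in~\cite{GSS/88}, I would first identify each full $\Zp$-sublattice of $L_p\cong\Zp^n$ with a representative upper-triangular matrix $M=(m_{ij})\in\trnzp$. In these coordinates one obtains a formula of the shape
$$\zeta_{L,p}(s)=(1-p^{-1})^{-n}\int_{\mathcal{V}_p(L)}|m_{11}|_p^{s-1}|m_{22}|_p^{s-2}\cdots|m_{nn}|_p^{s-n}\,d\mu,$$
where $\mu$ is the normalised additive Haar measure on $\trnzp$ and $\mathcal{V}_p(L)\subseteq\trnzp$ is the $p$-adic semi-algebraic set cut out by the condition that the row-span of $M$ be closed under the product of $L_p$. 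This closure condition translates into a finite system of polynomial congruences in the $m_{ij}$; crucially, the \emph{number} and \emph{degrees} of these polynomials depend only on $n$, while only their coefficients depend on the structure constants of $L$.

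The next step is to invoke a uniform form of Denef's rationality theorem: any $p$-adic integral of the form $\int_{V_p}|f|_p^{s}\,d\mu$ with $V_p\subseteq\Zp^m$ semi-algebraic evaluates to a rational function in $p^{-s}$ of shape $P(p^{-s})/\prod_{i\in I}(1-p^{a_i-b_is})$, where the index set $I$ and the pairs $(a_i,b_i)$ can be chosen to depend only on $m$ and on the degrees and number of the defining polynomials, and \emph{not} on their coefficients. Applied to the integral above, where both $m=n(n+1)/2$ and the syntactic complexity of the integrand and domain are functions of $n$ alone, this furnishes the desired finite set $I_n$ and pairs $(a_i,b_i)\in\N\times\N$ depending only on $n$, such that $Q_p(Y)$ divides $\prod_{i\in I_n}(1-p^{a_i}Y^{b_i})$ for every rank-$n$ ring $L$ and every prime $p$.

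The hard part will be securing this coefficient-uniformity in Denef's theorem. The classical proof via Hironaka's resolution of singularities reads the denominator factors $(1-p^{a_i-b_is})$ off from the numerical data of the exceptional divisors of a chosen resolution, and \emph{a priori} this data varies as the coefficients of the defining polynomials vary. To overcome this I would either apply resolution to a universal family parametrising all admissible structure constants for $L$ and extract generic numerical data, or appeal to a cell-decomposition argument in the sense of Denef, in which the combinatorial shape of the decomposition is controlled by the syntactic complexity of the defining formula alone. Either route delivers a uniform denominator polynomial of the type claimed.
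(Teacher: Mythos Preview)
The paper does not actually prove this theorem: it is quoted from du~Sautoy's 1994 paper, and the only comment offered is that ``the proof of Theorem~\ref{theorem duS denominators} relies on non-constructive methods from model theory. No procedure is known to describe explicitly (even just a reasonably small superset of) the factors of the denominator of the local zeta functions of a given ring.'' So there is no in-paper proof to compare against; one can only compare your outline to what is known about du~Sautoy's original argument.

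Your overall plan is sound and your second route---controlling the denominator via a cell-decomposition/definability argument in which the combinatorics depend only on the syntactic complexity of the formula---is precisely the model-theoretic mechanism du~Sautoy used. The integral representation you write down is the standard one, and your observation that the number and degrees of the defining polynomial conditions depend only on $n$ (with only the coefficients recording the structure constants of $L$) is exactly the leverage needed.

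Your first proposed route, however, has a genuine gap as stated. Resolving a universal family over the space of structure constants and then ``extracting generic numerical data'' does not suffice: the numerical data of a resolution can and does jump on closed subloci of the base, so generic invariants say nothing about special rings $L$ or special primes $p$. One can sometimes salvage this by stratifying the parameter space and resolving over each stratum, but making that uniform in $p$ is delicate and in effect reproduces the model-theoretic argument by other means. If you pursue this direction you should be explicit that a finite constructible stratification is required and that the numerical data on each stratum is bounded; the phrase ``generic numerical data'' is not enough.
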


Theorem~\ref{theorem duS denominators} implies that the degrees in
$p^{-s}$ of the denominator polynomials $Q_p(Y)$ are bounded when $L$
ranges over all rings of a given rank $n$. In particular, there is a
uniform upper bound on the lengths of the recurrence relations
satisfied by the sequences $(b_{p^i}(L))_i$ for fixed $L$ as~$p$
ranges over the primes. It also shows that the coefficients of
$Q_p(Y)$ are polynomials in~$p$, so that the denominators of the Euler
factors are really polynomials in $p$ and $p^{-s}$. The proof of
Theorem~\ref{theorem duS denominators} relies on non-constructive
methods from model theory. No procedure is known to describe
explicitly (even just a reasonably small superset of) the factors of
the denominator of the local zeta functions of a given ring.

The numerators of the Euler factors have, in general, a far more
complicated and interesting structure. In all of the examples we have
encountered so far, the coefficients of the polynomials $P_p(Y)$, too,
were -- at least for almost all primes~$p$ -- polynomials in $p$. It
was known already to the authors of~\cite{GSS/88} that this is not a
general feature. Their paper contains examples of zeta functions of
nilpotent groups whose local factor at the prime $p$ depends on how
the rational prime $p$ behaves in a number field. The right framework
to explain this phenomenon, however, was not discovered until much
later.

\begin{theorem}\cite[Theorem 1.3]{duSG/00}\label{theorem duSG}
Let $L$ be a ring. There are smooth algebraic varieties $V_t$,
$t\in[m]$, defined over $\Q$, and rational functions
$W_t(X,Y)\in\Q(X,Y)$ such that, for almost all primes $p$,

\begin{equation}\label{formula local}
\zeta_{L,p}(s)=\sum_{t=1}^m c_t(p)W_t(p,p^{-s}),
\end{equation}
where $c_t(p)$ denotes the number of $\Fp$-rational
points\footnote{The formulation given here follows from the original
formulation in~\cite{duSG/00} by the inclusion-exclusion principle.} of
$\ol{V_t}$, the reduction modulo $p$ of $V_t$.
\end{theorem}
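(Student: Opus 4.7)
The plan is to express $\zeta_{L,p}(s)$ as a $p$-adic integral, resolve the singularities of the polynomials that cut out its domain, and then integrate on the resolved space so that the integral decomposes into a sum whose coefficients count $\Fp$-rational points on the strata of a normal crossings divisor. This is the same general strategy by which Denef proved the rationality of Igusa's local zeta functions.

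First I would set up the integral representation. Choose a $\Z$-basis of $L$. Subrings of index $p^N$ in $L_p = L \otimes \Zp$ of finite index correspond to upper-triangular matrices $M = (m_{ij}) \in \text{Tr}(n,\Zp)$ (up to the action of $\GL_n(\Zp)$ on the rows) whose rows span a $\Zp$-submodule closed under the Lie/ring multiplication. Closure under multiplication is expressed by polynomial divisibility conditions on the $m_{ij}$ involving the structure constants of $L$. Following \cite{GSS/88}, this yields
\begin{equation*}
\zeta_{L,p}(s) \;=\; (1-p^{-1})^{-n}\int_{\mathcal{M}(L_p)} |m_{11}|_p^{s-1}|m_{22}|_p^{s-2}\cdots|m_{nn}|_p^{s-n}\,d\mu,
\end{equation*}
where $\mathcal{M}(L_p)\subset \text{Tr}(n,\Zp)$ is defined by a finite conjunction of $p$-adic conditions of the form $v_p(f_i(\bfm))\geq v_p(g_i(\bfm))$ with $f_i,g_i \in \Z[\bfm]$. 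Integrals of this shape are called \emph{cone integrals}.

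Next I would apply Hironaka's resolution of singularities, over $\Q$, to the product of all the polynomials $f_i,g_i,m_{ii}$ that appear in the integrand and the domain. This gives a proper birational morphism $h: Y \to \bfA^{\binom{n+1}{2}}_\Q$ from a smooth $\Q$-variety $Y$, with $h^{-1}(\prod f_i g_i \prod m_{ii} = 0) = \bigcup_{t\in T} E_t$ a divisor with simple normal crossings. Away from finitely many primes, this resolution has good reduction modulo $p$, and the induced map on $\Zp$-points is a measure-preserving bijection outside a set of measure zero. Pulling back and using the standard local coordinate computation on each chart converts the cone integral into a sum, indexed by subsets $I \subseteq T$, of explicit geometric series in $p$ and $p^{-s}$ whose prefactor is $|\ol{E_I^\circ}(\Fp)|$, where $E_I^\circ = \bigcap_{t \in I}E_t \setminus \bigcup_{t\notin I}E_t$ is the open stratum.

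Carrying this out yields, for almost all $p$, an identity of the form
\begin{equation*}
\zeta_{L,p}(s) \;=\; \sum_{I \subseteq T} |\ol{E_I^\circ}(\Fp)|\cdot W_I(p,p^{-s})
\end{equation*}
with $W_I(X,Y) \in \Q(X,Y)$ independent of $p$. Declaring the $V_t$ to be the (smooth, quasi-projective) $\Q$-varieties $E_I^\circ$ and using inclusion-exclusion to replace $|\ol{E_I^\circ}(\Fp)|$ by the counting function of closed subvarieties (or vice versa) gives precisely the shape claimed in \eqref{formula local}.

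The main obstacle is controlling the passage from the $p$-adic integral on $\mathcal{M}(L_p)$ to its transform under $h$: one must verify that for almost all primes the resolution, the strata $E_I$, and their intersections all have good reduction modulo $p$, that the Jacobian of $h$ contributes only monomial factors in the local coordinates, and that the combinatorial geometric series arising from integrating $|u|_p^a$ on $\Zp$ assemble into the rational functions $W_t(p,p^{-s})$ with coefficients independent of $p$. This is where the bulk of the technical work lies; everything else is bookkeeping once the resolution is in place.
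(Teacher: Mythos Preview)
Your proposal is correct and follows essentially the same approach as the one the paper sketches (and that the cited reference \cite{duSG/00} carries out in full): write $\zeta_{L,p}(s)$ as a cone integral over upper-triangular matrices, apply a resolution of singularities over~$\Q$ to monomialise the polynomials defining the integrand and the domain, and then, for primes of good reduction, decompose the pulled-back integral along the strata $E_I^\circ$ of the normal crossings divisor so that the $\Fp$-point counts of these strata appear as coefficients of explicit rational functions in $p$ and $p^{-s}$. The paper does not give its own proof of this theorem beyond this outline at the end of Section~\ref{subsection lhde}, so there is nothing further to compare.
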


We will remark on the proof of this theorem at the end of
Section~\ref{subsection lhde}.

In general, the numbers of $\Fp$-rational points of the
reduction modulo $p$ of varieties defined over $\Q$ will not be
polynomials in $p$, as the following example shows.

\begin{example}\label{example ec}
Let $E$ be the elliptic curve defined by the equation $y^2=x^3-x$. For
a prime $p$ we denote by $c(p)$ the number of $\Fp$-rational points of
$\ol{V_t}$, the reduction modulo $p$ of $E$, i.e.
$$c(p):=|\{(x,y)\in\Fp^2|\;y^2=x^3-x\}|.$$ It is known (\cite[\S
  18.4]{IrelandRosen/82}\footnote{The discrepancy with the formula
  given in \cite[\S 18.4, Theorem 5]{IrelandRosen/82} comes from the
  fact that there $c(p)$ refers to the number of \emph{projective}
  points of $E$, which includes also a point at infinity
  (cf.~Example~\ref{example ec projective}). This should also have
  been taken into account in~\cite[Example 1]{duSSegal/00}.})  that,
if $p\equiv3\mod(4)$, then $c(p)=p$. If, however, $p\equiv1\mod(4)$,
then $c(p)=p-(\pi+\ol{\pi})$, where $\pi$ is the complex number
satisfying $p=\pi{\ol{\pi}}$ and $\pi\equiv1\mod(2+2i)$.
\end{example}

It is not clear \emph{a priori} that varieties with such `wild'
arithmetical behaviour can occur in the description of zeta functions
of rings given in~\eqref{formula
  local}. In~\cite{duS-ecI/01,duS-ecII/01} du Sautoy gave an example
of a class-$2$-nilpotent Lie ring (or, equivalently,
class-$2$-nilpotent group) whose local zeta functions involve the
cardinalities $c(p)$ associated with the elliptic curve in Example
\ref{example ec}. In particular, he proved that the zeta function of
this Lie ring is not `finitely uniform'. We say that $\zeta_{L}(s)$ is
\emph{finitely uniform} if there are finitely many rational functions
$W_i(X,Y)\in\Q(X,Y)$, $i\in I$, a finite index set, such that for
every prime $p$ there exists an $i=i(p)$ such that
$\zeta_{L,p}(s)=W_i(p,p^{-s})$. We say that $\zeta_{L}(s)$ is
\emph{uniform} if it is finitely uniform for $|I|=1$ and \emph{almost
  uniform} if there exists a rational function $W(X,Y)$ such that
$\zeta_{L,p}(s)=W(p,p^{-s})$ for almost all~$p$. We will revisit du
Sautoy's example in Section~\ref{section variations}, where we will
look at the \emph{ideal} zeta function of this particular Lie ring,
counting only ideals of finite index. For this variant, we will be
able to give an explicit formula for the local zeta functions,
illustrating Theorem~\ref{theorem duSG analytic} (or rather its
analogue for ideal zeta functions of rings) in this particular
case. It seems worth pointing out, however, that so far all the zeta
functions $\zeta_{L}(s)$ of rings~$L$ for which explicit formulae are
known are finitely uniform.

For future reference we study in some detail an important sample
family of varieties with very `uniform' reduction behaviour modulo
$p$. They play a key role in explicit formulae for zeta functions of
rings.

\subsection{Flag varieties and Coxeter groups}\label{subsection flags}

Let $V$ denote an $n$-dimensional vector space over a field~$k$. For
each $i\in[n-1]$, the set $G_{n,i}(k)$ of subspaces of $V$ of
dimension $i$ can be given the structure of a smooth projective
variety over $k$, called the $i$-th \emph{Grassmannian} of $V$. Given
a prime power $q$ we obtain $G_{n,i}(\Fq)$. We define, for $1\leq i<n$, the
polynomial
$$\binom{n}{i}_X:=\prod_{j = 0}^{i-1}
(X^{n-j}-1)/(X^{i-j}-1)\in\Z[X].$$ It is not hard to prove that
$|G_{n,i}(\Fq)|=\binom{n}{i}_q\in\Z[q]$. For example, the cardinality
$|\mathbb{P}^{n-1}(\Fq)|$ of the $n-1$-dimensional projective space of
lines in~$\Fq^n$ is given by
$\binom{n}{1}_q=(q^n-1)/(q-1)=1+q+\dots+q^{n-1}$.

More generally, let $I=\{i_1,\dots,i_l\}_<$, be a subset of $[n-1]$. A
\emph{flag of type~$I$} in $V$ is a sequence $(V_i)_{i\in I}$ of
subspaces of $V$ satisfying
$$\{0\}\subsetneq V_{i_1}\subsetneq V_{i_2}\subsetneq\cdots\subsetneq
V_{i_l}\subsetneq V$$ and, for all $i\in I$, $\dim(V_i)=i$. A flag is
called \emph{complete} if it is of type $I=[n-1]$. The set of flags of
type $I$ can be given the structure of a smooth projective variety
over $k$. If $k=\Fq$, we obtain the variety of flags of type $I$ in
$\Fq^n$. We define the polynomial
\begin{equation}\label{definition flag polynomials}
\binom{n}{I}_X:=\binom{n}{i_l}_X\binom{i_l}{i_{l-1}}_X\dots\binom{i_2}{i_1}_X\in\Z[X].
\end{equation}
The numbers $\binom{n}{I}_q$ are called \emph{$q$-binomial
coefficients} or \emph{Gaussian polynomials}. One easily proves
inductively that the number of flags of type $I$ in $\Fq^n$ is given
by the polynomial $\binom{n}{I}_q\in\Z[q]$. For example, the number of
complete flags in $\Fq^3$ is given by $(1+q+q^2)(1+q)=1+2q+2q^2+q^3$.

For further applications we shall need an expression for the
polynomials~$\binom{n}{I}_{X}$ in terms of Coxeter group theoretic
notions.

\begin{definition}
Let $S_n$ be the symmetric group of $n$ letters with standard
(Coxeter) generators $s_1,\dots,s_{n-1}$ (in cycle notation these are
the transpositions $s_i=(i\;i+1)$). Let $w\in S_n$. The
\emph{(Coxeter) length} $\ell(w)$ is the length of a shortest word in
the generators $s_i$ representing $w$. The \emph{(left) descent type}
$D_L(w)$ is the set $\{i\in[n-1]|\;w(i+1)<w(i)\}$.
\end{definition}
\noindent It can be shown that
\begin{equation}\label{descent characterisation}
D_L(w)=\{i\in[n-1]|\;\ell(s_iw)<\ell(w)\}.
\end{equation}

\begin{proposition}
\label{proposition binomial length} 
Let $q$ be a prime power. For all $I\subseteq[n-1]$
$$\binom{n}{I}_q=\sum_{w\in S_n, \,D_L(w)\subseteq I} q^{\ell(w)}.$$
\end{proposition}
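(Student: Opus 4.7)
The plan is to prove this identity in two steps: first, a parabolic-coset decomposition of $S_n$ that rewrites the sum as a quotient of Gaussian factorials; second, a telescoping match with the product formula defining the flag polynomial. No geometric input is needed, although one could alternatively invoke the Schubert cell decomposition of the flag variety of type $I$ over $\mathbb{F}_q$ to reach the same conclusion.

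For the first step, set $W(I) := \langle s_j : j \in [n-1] \setminus I \rangle$, the Young subgroup isomorphic to $S_{i_1} \times S_{i_2 - i_1} \times \cdots \times S_{n - i_l}$ (with conventions $i_0 := 0$, $i_{l+1} := n$). I would invoke the standard Coxeter-theoretic fact that every $w \in S_n$ has a unique factorisation $w = v u$ with $v \in W(I)$ and $u$ a minimum-length representative of its $W(I)$-coset, and that this factorisation is length-additive: $\ell(w) = \ell(v) + \ell(u)$. Using~\eqref{descent characterisation}, the element $u$ is coset-minimal precisely when $\ell(s_j u) > \ell(u)$ for every $j \in [n-1] \setminus I$, i.e.\ when $D_L(u) \subseteq I$.

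For the second step, I sum $q^{\ell(w)}$ over $S_n$ and factor according to the decomposition above to obtain
$$[n]_q! \;=\; \sum_{w \in S_n} q^{\ell(w)} \;=\; \left(\sum_{u \in S_n,\, D_L(u) \subseteq I} q^{\ell(u)}\right) \prod_{k=1}^{l+1} [i_k - i_{k-1}]_q! \, ,$$
where $[m]_q! := \prod_{j=1}^{m}(q^j - 1)/(q - 1)$ and I apply the classical Poincaré-polynomial identity $\sum_{w \in S_m} q^{\ell(w)} = [m]_q!$ both to $S_n$ itself and to each $S_{i_k - i_{k-1}}$ factor of $W(I)$. Solving for the inner sum yields $[n]_q!/\prod_{k} [i_k - i_{k-1}]_q!$, which, via $\binom{n}{i}_q = [n]_q!/([i]_q! [n-i]_q!)$, telescopes into exactly the product~\eqref{definition flag polynomials} defining $\binom{n}{I}_q$.

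The main point needing care is not depth but convention-tracking: one must verify that the left-descent definition $D_L(w) = \{i : w(i+1) < w(i)\}$, combined with~\eqref{descent characterisation}, really does characterise minimum-length representatives of the $W(I)$-cosets appearing on the relevant side of the factorisation $w = vu$, rather than of the cosets on the other side. This reduces to the single observation that $u$ is minimal in its coset iff left-multiplication by each simple reflection $s_j$ with $j \notin I$ strictly increases length, which by~\eqref{descent characterisation} is precisely the condition $j \notin D_L(u)$, that is $D_L(u) \subseteq I$. Everything else is a routine Poincaré-series calculation.
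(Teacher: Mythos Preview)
Your argument is correct, but it takes a genuinely different route from the paper's proof. The paper argues geometrically: it identifies $\binom{n}{I}_q$ with $|\GL_n(\Fq)/B_I(\Fq)|$ for the appropriate parabolic $B_I$, invokes the Bruhat decomposition $\GL_n=\bigcup_{w\in S_n}BwB$, and uses that each Schubert cell $B(\Fq)wB(\Fq)/B(\Fq)$ is an affine space of dimension $\ell(w)$; the passage from $B$ to $B_I$ then selects, among the Schubert cells in each $B_I$-coset, the unique one of minimal dimension, and these are exactly the cells indexed by $w$ with $D_L(w)\subseteq I$.

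By contrast, you work entirely inside the Coxeter group, using the parabolic factorisation $w=vu$ with length additivity and the Poincar\'e polynomial $\sum_{w\in S_m}q^{\ell(w)}=[m]_q!$. Your approach is more elementary and yields a formal identity in $\Z[q]$ (not merely a point-count for prime powers $q$); it also transports verbatim to any finite Coxeter group with its standard parabolics. The paper's approach, on the other hand, makes the link to the flag-variety interpretation of $\binom{n}{I}_q$ explicit and supplies the Schubert-cell picture that is conceptually useful later when the $\binom{n}{I}_{p^{-1}}$ appear as weights in the decomposition of $\zeta_{L,p}(s)$. You yourself flagged the geometric alternative; that is precisely what the paper does.
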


\begin{proof}
We first prove the proposition for $I=[n-1]$. In this case,
$\binom{n}{[n-1]}_q$ gives the number of complete flags
$(V_i)_{i\in[n-1]}$ in the finite vector space $\Fq^n$. These may be
also viewed as the cosets $\GL_n(\Fq)/B(\Fq)$, where $B$ denotes the
Borel subgroup of upper-triangular matrices in $\GL_n$. It is
well-known that the algebraic group $\GL_n$ satisfies a Bruhat
decomposition
$$\GL_n=\bigcup_{w\in S_n} B w B $$ (where we identify permutations
in $S_n$ with permutation matrices in $\GL_n$, acting from the left on
unit column vectors, say).  Therefore
$$\GL_n(\Fq)/B(\Fq)=\bigcup_{w\in S_n} B(\Fq) w B(\Fq)/B(\Fq).$$ The
disjoint pieces $\Omega_w(\Fq):=B(\Fq) w B(\Fq)/B(\Fq)$, $w\in S_n$,
are called \emph{Schubert cells}. It can be shown that each Schubert
cell $\Omega_w(\Fq)$ is an affine space over $\Fq$ of dimension given
by the length $\ell(w)$. Indeed, a complete set of representatives of
$B(\Fq) w B(\Fq)/B(\Fq)$, of size $q^{\ell(w)}$, is obtained in the
following way: start with the permutation matrix corresponding to
$w$. Substitute an arbitrary entry in $\Fq$ for each of the zeros of
this matrix which is not positioned anywhere below or to the right of
a~$1$. We conclude that
\begin{align*}
\binom{n}{[n-1]}_q&=|\GL_n(\Fq)/B(\Fq)|=\left|\bigcup_{w\in S_n}  B(\Fq) w B(\Fq)/B(\Fq)\right|\\
&=\sum_{w\in S_n}|\Omega_w(\Fq)|=\sum_{w\in S_n} q^{\dim(\Omega_w)}=\sum_{w\in S_n}q^{\ell(w)}
\end{align*}
This proves the proposition in the special case $I=[n-1]$.

\begin{example}
Let $n=5$. The Schubert cell $\Omega_w$ indexed by the element
$$w=(1532)\in S_5$$ may be identified with the set of matrices of the
form
$$\left(\begin{matrix} *&*&1&0&0\\ *&*&0&*&1\\ *&1&0&0&0\\
  *&0&0&1&0\\1&0&0&0&0\end{matrix}\right),$$ where $*$ may take any
  value in $\Fq$. Note that there are $7$ $*$'s, reflecting the fact
  that $\ell(w)=7$. Indeed, a shortest word representing $w$ is
  $$s_2s_3s_1s_4s_3s_2s_1.$$ The descent type of $w$ is
  $D_L(w)=\{2,4\}$.
\end{example}

In the general case, given $I=\{i_1,\dots,i_l\}_<\subseteq[n-1]$,
$\binom{n}{I}_q$ is the number of flags $(V_i)_{i\in I}$,
$\dim(V_{i})=i$, in $\Fq^n$. These are in $1-1$-correspondence with
cosets $\GL_n(\Fq)/B_I(\Fq)$, where $B_I(\Fq)$ is the parabolic
subgroup consisting of matrices of the form
$$\left(\begin{matrix}
  \gamma_{i_1}&*&*&*\\0&\gamma_{i_2-i_1}&*&*\\\vdots&\ddots&\ddots&\vdots\\0&0&0&\gamma_{n-i_l}\end{matrix}\right),$$
  where $\gamma_i\in\GL_i(\Fq)$.
  
  Among the Schubert cells $\Omega_{w}(\Fq)$ which are being
  identified by passing to cosets of $B_I(\Fq)$ there is a unique one
  with minimal dimension. It is not hard to see that these cells are
  exactly the cells indexed by elements $w\in S_n$ with
  $D_L(w)\subseteq I$, and that they constitute a set of
  representatives for the cosets $\GL_n(\Fq)/B_I(\Fq)$. We obtain
\begin{align*}
\binom{n}{I}_{q}&=|\GL_n(\Fq)/B_I(\Fq)|=\left|\bigcup _{w\in
S_n}B(\Fq)wB(\Fq)/B_I(\Fq)\right|\\ &=\sum_{w\in S_n, D_L(w)\subseteq
I}|\Omega_w(\Fq)|=\sum_{w\in S_n, D_L(w)\subseteq I}q^{\ell(w)}.
\end{align*}
This proves Proposition~\ref{proposition binomial length} in general.
\end{proof}

\subsection{Counting with $p$-adic integrals}\label{subsection counting}
The idea to employ tools from the theory of $p$-adic integration to
count subgroups and subrings is as old as the subject. It was first
put to work in~\cite{GSS/88}, and was further developed
in~\cite{duSG/00} and ~\cite{Voll/06a}. All of these $p$-adic
integrals are in some sense generalisations of Igusa's local zeta
function, which we describe first. This will allow us to give a first
proof of formula~\eqref{formula abelian} for the zeta functions of
abelian groups.  We will also show how a formulation in terms of
$p$-adic integrals enables us to express the local zeta functions of
the Heisenberg Lie ring (cf.~Example~\ref{example heisenberg}) in
terms of the generating function associated with a polyhedral cone (or,
equivalently, a system of linear homogeneous diophantine equations),
which we may evaluate to confirm formula~\eqref{zeta heisenberg}. We
will study these in some detail in Section~\ref{subsection lhde}.

The $p$-adic integrals we consider are all variants of Igusa's local
zeta function. Given a polynomial $f\in\Z[x_1,\dots,x_n]$, Igusa's
local zeta function associated with $f$ is the $p$-adic integral
$$Z_f(s):=\int_{\Zp^n}|f(\bfx)|_p^{s}d\mu^{(n)}.$$ Here, $\Zp$ are the
$p$-adic integers, $\mu^{(n)}$ is the (additive) Haar measure on $\Zp^n$
(normalised such that $\mu^{(n)}(\Zp^n)=1$), $|\;\;|_p$ denotes the $p$-adic
norm (defined by $|a|_p:=p^{-v_p(a)}$, where $v_p(a)=r$ if $a=p^rb$
with $p\nmid b$), and $s$ is a complex variable. (For a reminder
about the Haar measure on $\Zp^n$,
see~\cite[Section~1.6]{duS-ennui/02}.) %The evaluation of this
                                       %$p$-adic
%integral is helped by the fact that the domain of integration may be
%partitioned into countably many subsets on which the integrand is
%constant.

Igusa's local zeta function associated with the polynomial $f$ is a good
tool to understand the sequence $(N_m)$, where $N_m$ denotes the
number of solutions of the congruence $f(\bfx)\equiv0\mod(p^m)$. These
numbers may be encoded in a Poincar\'e series
$$P_f(t):=\sum_{m=0}^\infty p^{-nm}N_mt^m.$$
This Poincar\'e series is related to the $p$-adic integral via the formula
\begin{equation}\label{equation igusa}
P_f(p^{-s})=\frac{1-p^{-s}Z_f(s)}{1-p^{-s}}.
\end{equation}
Indeed, $p^{-nm}N_m$ is the measure of the set
$\{\bfx\in\Zp^n|\;v_p(f(\bfx))\geq m\}$ and thus 
$$\mu^{(n)}(\{\bfx\in\Zp^n|\;v_p(f(\bfx))=m\})=p^{-nm}N_m-p^{-n(m+1)}N_{m+1}.$$
Thus
\begin{align*}
Z_f(s)&=\sum_{m=0}^\infty \mu^{(n)}(\{\bfx\in\Zp^n|\;v_p(f(\bfx))=m\})p^{-sm}\\
&=\sum_{m=0}^\infty \left(p^{-nm}N_m-p^{-n(m+1)}N_{m+1}\right)p^{-sm}\\
&=P_f(p^{-s})-p^s\left(P_f(p^{-s})-1\right)\\
&=(1-p^s)P_f(p^{-s})+p^s,
\end{align*}
which is equivalent to~\eqref{equation igusa}. As an example of the
above formula, consider the integral
$$Z(s):=\int_{\Zp}|x|_p^sd\mu^{(1)}.$$ Observing that the associated
Poincar\'e series equals 
$$P(p^{-s})=\sum_{m=0}^\infty
(p^{-1-s})^m=\zeta_p(s+1)=\frac{1}{1-p^{-1-s}}$$ we deduce that
\begin{equation}\label{equation igusa x}
Z(s)=\frac{1-p^{-1}}{1-p^{-1-s}}=(1-p^{-1 })\zeta_p(s+1).
\end{equation}

We now explore how $p$-adic integrals may be used to count subgroups,
by giving a first proof of formula~\eqref{formula abelian}. Recall
that we may consider $\Z^n$ as a ring with trivial multiplication, so
counting subgroups and counting subrings is the same thing in this
case. It suffices to prove that, for each prime~$p$,
\begin{equation}\label{formula abelian local}
\zeta_{\Z^n,p}(s)=\zeta_{\Zp^n}(s)=\zeta_p(s)\zeta_p(s-1)\cdots\zeta_p(s-(n-1)).
\end{equation}

The first equation is clear. For the second equation, assume that
$\Zp^n=\Zp e_1\oplus\dots\oplus\Zp e_n$ as $\Zp$-module, and set
$\G:=\text{GL}_n(\Zp)$. Then subgroups of $\Zp^n$ of finite index may
be identified with right $\G$-cosets of $n\times n$-matrices over
$\Zp$ with non-zero determinant. Indeed, every such subgroup may be
generated by $n$ generators, whose coordinates with respect to the
chosen basis may be encoded in the rows of an $n\times n$-matrix
over~$\Zp$. Two such matrices $M_1$ and $M_2$ correspond to the same
subgroup if and only if there is an element $\gamma\in\G$ such that
$M_1=\gamma M_2$. In fact, one sees easily that these matrices may be
chosen to lie in the set $\trnzp$ of upper-triangular matrices over
$\Zp$, so that subgroups $H$ correspond to cosets $\mcU M$, where
$M\in\trnzp$ and $\mcU:=\Gamma\cap\trnzp$.

Now choose, for each $H\leq_f\Zp^n$, a representative $M_H$ in $\mcU
M=:\mcM(H)$, the $\mcU$-coset in $\trnzp$ corresponding to $H$. Notice
that
\begin{equation}\label{equation index=determinant}
|\Zp^n:H|=|\det(M_H)|_p^{-1},
\end{equation}
 and that
\begin{equation}\label{equation measure}
\mu(\mcM(H))=(1-p^{-1})^n\prod_{i=1}^n|(M_H)_{ii}|_p^{i}
\end{equation}
where
$\mu$ denotes the additive Haar measure on
$\trnzp\cong\Zp^{\binom{n+1}{2}}$, normalised so that $\mu(\trnzp)=1$.
We thus obtain a partition 
\begin{equation}\label{equation disjoint union}
\trnzp=\bigcup_{H\leq_f\Zp^n}\mcM(H)\cup\text{Tr}^0(n,\Zp),
\end{equation}
(where $\text{Tr}^0(n,\Zp)$ denotes the set of $n\times
n$-upper-triangular matrices over $\Zp$ with zero determinant, of
Haar-measure zero) and compute
\begin{align*}
\sum_{H\leq\Zp^n}|\Zp^n:H|^{-s}&=\sum_H
  |\det(M_H)|_p^s&&\eqref{equation index=determinant}\\ &=\sum_H
  \mu(\mathcal{M}(H))^{-1}\mu(\mathcal{M}(H))\prod_{i=1}^n|(M_H)_{ii}|_p^s\\
  &=\sum_H(1-p^{-1})^{-n}\prod_{i=1}^n|(M_H)_{ii}|_p^{-i}\int_{\mathcal{M}(H)}\prod_{i=1}^n|(M_H)_{ii}|_p^s
  d\mu&&\eqref{equation measure}\\
  &=(1-p^{-1})^{-n}\int_{\trnzp}\prod_{i=1}^n|M_{ii}|_p^{s-i}
  d\mu&&\eqref{equation disjoint union}\\
  &=(1-p^{-1})^{-n}\int_{\Zp^n}\prod_{i=1}^n |x_i|_p^{s-i}d\mu^{(n)}\\
  &=(1-p^{-1})^{-n}\prod_{i=1}^n\int_{\Zp}|x|_p^{s-i}d\mu^{(1)}&&\text{Fubini}\\
  &=\prod_{i=1}^n\zeta_p(s-(i-1)),&&\eqref{equation igusa x}
\end{align*}
which proves~\eqref{formula abelian local}.

Note that we managed to compute each of the local factors of
$\zeta_{\Z^n}(s)$ by expressing it as an integral over the affine
space $\trnzp$ of upper-triangular matrices. The integrand in this
integral is a simple function of the diagonal entries of the
matrices. How does this approach vary if we consider rings with
nontrivial multiplication? For arbitrary rings $L$, our above analysis
carries through up to (and including) equation~\eqref{equation
measure}. In general, however, not every coset $\mcU M$ will
correspond to a subring of $L$. We therefore need to describe
conditions for such a coset to define a subring.

Let us return to Example~\ref{example heisenberg} of the discrete
Heisenberg group. Its associated Lie ring $L$ has a $\Z$-basis
$(x,y,z)$, where $[x,y]=z$ is the only non-trivial relation. To
compute its local zeta function at the prime $p$, we need to count
subalgebras in the $\Zp$-algebra $L_p:=\Zp\otimes L$. The rows of a
matrix $M=(M_{ij})\in\trthreezp$ encode the generators of a full
additive sublattice of $\Zp^3$. To determine whether such a matrix gives
rise to a subalgebra we need to check whether this sublattice
is closed under taking Lie brackets of its generators. In this case it
is easy to see that the only condition we need to check is
\begin{equation*}\label{heisenberg condition}
[M_{11}x+M_{12}y+M_{13}z,M_{22}y+M_{23}z]\in \langle M_{33}z\rangle_{\Zp}.
\end{equation*}
Using the commutator relation $[x,y]=z$ and the bilinearity of the Lie
bracket~$[\,,]$, we see that this condition is equivalent to
\begin{equation}\label{div condition heisenberg}
M_{33}\mid M_{11}M_{22}.
\end{equation}
Note that this divisibility condition is equivalent to the inequality
of $p$-adic valuations
$$ v_p(M_{33})\leq v_p(M_{11})+v_p(M_{22}).$$ We thus obtain
\begin{align*}
\sum_{H\leq
L_p}|L_p:H|^{-s}&=(1-p^{-1})^{-3}\int_{\left\{\substack{M\in\trthreezp|\\\;M_{33}\mid
M_{11}M_{22}}\right\}}\prod_{i=1}^3|M_{ii}|_p^{s-i}d\mu^{(6)}\\&=(1-p^{-1})^{-3}\int_{\{\bfx\in\Zp^3|\;x_3\mid x_1x_2\}}|x_{1}|_p^{s-1}|x_{2}|_p^{s-2}|x_{3}|_p^{s-3}d\mu^{(3)}\\&=\sum_{\{\bfm\in\N_0^3|\;m_3\leq
m_1+m_2\}}(p^{-s})^{m_1}(p^{1-s})^{m_2}(p^{2-s})^{m_3}\label{sum heisenberg}.
\end{align*}

It is not hard to compute this sum explicitly (see
Proposition~\ref{proposition heisenberg} below). It is useful,
however, to observe that it may be interpreted as a generating
function associated with a system of linear homogeneous diophantine
equations.

\subsection{Linear homogeneous diophantine equations} \label{subsection lhde}
 Let $\Phi$ be an $r\times m$ matrix over $\Z$ (without loss of
generality of rank $r$), and consider the system of linear equations
\begin{equation}\label{lhde}
\Phi\bsalpha={\bf 0},
\end{equation}
where $\bsalpha^t=(\alpha_1,\dots,\alpha_m)$ and ${\bf
0}\in\N_0^r$. The set of \emph{non-negative} integral solutions of
\eqref{lhde} form a commutative monoid
$\mcE:=\{\bsalpha\in\N_0^m|\;\Phi\bsalpha={\bf 0}\}$ with identity
under addition. One approach to study this monoid is to investigate
the generating function
$$E(\bfX):=E_\Phi(\bfX):=\sum_{\bsalpha\in
\mcE}\bfX^{\bsalpha},$$ where
$\bfX^{\bsalpha}=X_1^{\alpha_1}\dots X_m^{\alpha_m}$ is a
monomial in variables $X_1,\dots,X_m$.

The generating functions $E_\Phi(\bfX)$ have been intensely studied by
Stanley and others (\cite[Chapter 4.6]{Stanley/97}, \cite[Chapter
I]{Stanley/96}). It can be proved, for example, that $E_\Phi(\bfX)$ is
always a rational function in the variables $X_1,\dots,X_m$, with
denominator of the form $\prod_{{\bfbeta}\in CF(E)}(1-\bfX^{\bfbeta})$, where
$\beta$ ranges over the finite set $CF(E)$ of \emph{completely
fundamental solutions} to $\Phi$. (A solution $\bsbeta$ to
\eqref{lhde} is called \emph{fundamental} if, whenever
${\bsbeta}=\bsgamma+\bsdelta$ for $\bsgamma,\bsdelta\in \mcE$,
$\bsgamma=\bsdelta$ or $\bsdelta=\bsbeta$. A solution ${\bsbeta}$ to
\eqref{lhde} is called \emph{completely fundamental} if, whenever
$n{\bsbeta}=\bsgamma +\bsdelta$ for $\bsgamma,\bsdelta\in \mcE$, then
$\bsgamma=n_1{\bsbeta}$ for some $0\leq n_1\leq n$.)

We will also consider the closely related generating function
$$\ol{E}(\bfX):=\ol{E}_\Phi(\bfX):=\sum_{{\bsalpha}\in\ol{\mcE}}\bfX^{\bsalpha},$$
where $\ol{\mcE}:=\{{\bsalpha}\in\N^m|\;\Phi{\bsalpha}={\bf 0}\}$, the
semigroup of the \emph{positive} integral solutions
of~\eqref{lhde}. $\ol{E}(\bfX)$ is also a rational function in the
variables $X_1,\dots,X_m$. The following result of Stanley will be of
great importance in applications to zeta functions of rings. We denote
by $1/\bfX$ the vector of inverted variables $(1/X_1,\dots,1/X_m)$.

\begin{theorem}\cite[Theorem 4.6.14]{Stanley/97}\label{theorem stanley} 
Assume that $\ol{\mcE}\not=\varnothing$ and set $d:=\dim(\mathscr{C})$, where
$\mathscr{C}$ is the cone of non-negative \emph{real} solutions
to~\eqref{lhde}. Then 
\begin{equation}\label{stanley reciprocity}
\ol{E}(\bfX)=(-1)^d E(1/\bfX).
\end{equation}
\end{theorem}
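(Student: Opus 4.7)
The plan is to recast the statement geometrically and then reduce it to the case of a simplicial cone, where the identity follows from an explicit computation with the fundamental parallelepiped. Geometrically, $\mcE = \mathscr{C} \cap \Z^m$ is the set of lattice points of~$\mathscr{C}$; the hypothesis $\ol{\mcE} \neq \varnothing$ implies that $\mathscr{C}$ spans $\ker(\Phi)$ as a linear subspace, so that the relative interior $\mathscr{C}^\circ$ coincides with $\mathscr{C} \cap \mathbb{R}_{>0}^m$ and hence $\ol{\mcE} = \mathscr{C}^\circ \cap \Z^m$. Writing $f_\mathscr{C}(\bfX) := E(\bfX)$ and $f_{\mathscr{C}^\circ}(\bfX) := \ol{E}(\bfX)$, the claim becomes the identity of rational functions $f_{\mathscr{C}^\circ}(\bfX) = (-1)^d f_\mathscr{C}(1/\bfX)$.

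First I would dispatch the case where $\mathscr{C} = \sigma$ is \emph{simplicial}, say $\sigma = \sum_{i=1}^d \mathbb{R}_{\geq 0} \mathbf{v}_i$ with $\mathbf{v}_1, \ldots, \mathbf{v}_d \in \Z^m$ linearly independent. Each lattice point of~$\sigma$ decomposes uniquely as $\mathbf{p} + \sum_{i=1}^d n_i \mathbf{v}_i$ with $n_i \in \N_0$ and $\mathbf{p}$ in the half-open parallelepiped $\Pi := \{\sum_i t_i \mathbf{v}_i : 0 \leq t_i < 1\}$, which yields
$$ f_\sigma(\bfX) = \frac{\sum_{\mathbf{p} \in \Pi \cap \Z^m} \bfX^{\mathbf{p}}}{\prod_{i=1}^d (1 - \bfX^{\mathbf{v}_i})}. $$
The analogous parametrisation of $\sigma^\circ$ with $0 < t_i \leq 1$ produces $f_{\sigma^\circ}(\bfX)$, and the involution $\mathbf{p} \mapsto \mathbf{v}_1 + \cdots + \mathbf{v}_d - \mathbf{p}$ bijects the two sets of parallelepiped representatives. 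Combining this bijection with the identity $1 - \bfX^{-\mathbf{v}_i} = -\bfX^{-\mathbf{v}_i}(1 - \bfX^{\mathbf{v}_i})$, which collectively absorbs the $d$ sign flips introduced by $\bfX \mapsto 1/\bfX$, one obtains $f_{\sigma^\circ}(\bfX) = (-1)^d f_\sigma(1/\bfX)$.

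For general~$\mathscr{C}$, the plan is to triangulate into $d$-dimensional rational simplicial cones $\sigma_1, \ldots, \sigma_N$ meeting only along shared faces, and then apply a \emph{half-open} decomposition to avoid double counting on boundaries. Concretely, one chooses a linear functional $u \in (\mathbb{R}^m)^*$ generic with respect to the triangulation and assigns to $\sigma_i^\natural$ the facets of $\sigma_i$ visible from~$u$; this yields a disjoint partition $\mathscr{C} = \bigsqcup_i \sigma_i^\natural$, while the opposite choice $\sigma_i^\flat$ disjointly partitions $\mathscr{C}^\circ$. The parallelepiped argument of the preceding paragraph extends verbatim to half-open simplicial cones, giving $f_{\sigma_i^\flat}(\bfX) = (-1)^d f_{\sigma_i^\natural}(1/\bfX)$ for each~$i$, and summing over~$i$ delivers the theorem.

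The main obstacle is the combinatorial bookkeeping of this half-open decomposition: one has to verify that a single generic functional~$u$ simultaneously yields a partition of~$\mathscr{C}$ by the $\sigma_i^\natural$ and of $\mathscr{C}^\circ$ by the $\sigma_i^\flat$, with the reciprocity involution matching the two stratifications face by face. This is transparent in the language of indicator functions of polyhedra modulo affine subspaces (Brion--Lawrence), but can also be arranged directly by induction on the dimension of the shared faces. Beyond this bookkeeping, the entire argument collapses onto the bijection exhibited in the simplicial case.
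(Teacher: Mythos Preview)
The paper does not give its own proof of this theorem; it is quoted from Stanley's \emph{Enumerative Combinatorics} and used as a black box. So there is nothing in the paper to compare against directly.

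That said, your proposal is a correct and complete strategy. The simplicial case via the fundamental parallelepiped and the involution $\mathbf{p}\mapsto\sum_i\mathbf{v}_i-\mathbf{p}$ is exactly how Stanley handles it. For the passage to general cones, Stanley's original argument triangulates and then runs inclusion--exclusion over the face poset of the subdivision (M\"obius inversion), rather than the half-open decomposition with a generic linear functional that you propose. Your route (in the spirit of Lawrence--Varchenko / Brion) is arguably cleaner: it trades the M\"obius bookkeeping for the visibility bookkeeping you correctly identify as the only nontrivial point, and it has the pleasant feature that the reciprocity on each half-open simplicial piece is literally the same parallelepiped computation as in the closed case. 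Either approach works; yours is just a more modern packaging of the same idea.
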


\begin{example} \label{example trivial cone}
If $r=0$ we obtain $\mcE=\N_0^m$, with completely fundamental
solutions $\{(1,0,\dots,0),\dots,(0,\dots,0,1)\}$, yielding
$$E(\bfX)=\sum_{{\bsalpha}\in\N_0^m}\bfX^{\bsalpha}=\prod_{i=1}^m\frac{1}{1-X_i}\quad\text{
and
}\quad\ol{E}(\bfX)=\sum_{{\bsalpha}\in\N^m}\bfX^{\bsalpha}=\prod_{i=1}^m\frac{X_i}{1-X_i}.$$
\end{example}

\begin{example} \label{example cone}
Consider the matrix $\Phi=(1,1,-1,-1)$. It can be shown that the
(completely) fundamental solutions of the equation
$\alpha_1+\alpha_2-\alpha_3-\alpha_4=0$ are $(1,0,1,0)$, $(1,0,0,1)$,
$(0,1,1,0)$ and $(0,1,0,1)$. Note that there is one non-trivial
relation between these solutions:
$$(1,0,1,0)+(0,1,0,1)=(1,0,0,1)+(0,1,1,0)\;(=(1,1,1,1)).$$ This can be
used to show (see~\cite[I.11]{Stanley/96} for details) that
\begin{equation}
\label{equation gen. fun.}
E_{\Phi}(X_1,X_2,X_3,X_4)=\frac{1-X_1X_2X_3X_4}{(1-X_1X_3)(1-X_1X_4)(1-X_2X_3)(1-X_2X_4)}.
\end{equation}
\end{example}

Note that we obtain nothing more complicated if we allow inequalities
rather than equalities in~\eqref{lhde}. Indeed, an inequality may
always be expressed in terms of an equality by introducing a slack
variable. The generating functions enumerating integral points in
rational polyhedral cones (intersections of finitely many rational
half-spaces) may therefore be expressed in terms of generating
functions associated with linear homogeneous diophantine
equations. For example, $m_3\leq m_1+m_2$ if and only if there exists
$m_4\in\N_0$ such that $m_3+m_4=m_1+m_2$ or, equivalently,
$m_1+m_2-m_3-m_4=0$. We obtain the generating function enumerating
non-negative solutions of the inequality by taking the generating
function associated with the equality by setting the variable
corresponding to the slack variable to~$1$. From Example~\ref{example
  cone} we get, for instance,

\begin{multline}
\sum_{\{\bfm\in\N_0^3|\;m_3\leq
m_1+m_2\}}X_1^{m_1}X_2^{m_2}X_3^{m_3}=E_{\Phi}(X_1,X_2,X_3,1)\\=\frac{1-X_1X_2X_3}{(1-X_1X_3)(1-X_1)(1-X_2X_3)(1-X_2)}.\label{generating function heisenberg}
\end{multline}

In Section~\ref{subsection counting} we showed how the local zeta
functions of the Heisenberg Lie rings can be expressed in terms of the
rational function given in \eqref{generating function heisenberg}. We
summarise this result in

\begin{proposition}\cite[Proposition~8.1]{GSS/88}\label{proposition heisenberg}
Let $L$ be the Heisenberg Lie ring (cf.~Example~\ref{example
heisenberg}). Then, for all primes $p$, the local zeta function of $L$
equals
\begin{multline*}\zeta_{L_p}(s)=E_\Phi(p^{-s},p^{1-s},p^{2-s},1)=\\\frac{1-p^{3-3s}}{(1-p^{-s})(1-p^{1-s})(1-p^{2-2s})(1-p^{3-2s})}=\\\zeta_p(s)\zeta_p(s-1)\zeta_p(2s-2)\zeta_p(2s-3)\zeta_p(3s-3)^{-1}.
\end{multline*}
\end{proposition}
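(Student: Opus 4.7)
The plan is to assemble the ingredients already developed in the two preceding subsections. The $p$-adic integration carried out in Section~\ref{subsection counting} reduces the local zeta function of the Heisenberg Lie ring $L$ to the sum
$$\zeta_{L_p}(s)=\sum_{\{\bfm\in\N_0^3\,|\,m_3\leq m_1+m_2\}}(p^{-s})^{m_1}(p^{1-s})^{m_2}(p^{2-s})^{m_3},$$
which is a generating series over the non-negative integral solutions of a single linear inequality and therefore fits directly into the framework of Section~\ref{subsection lhde}.

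Next I would convert the inequality $m_3\leq m_1+m_2$ to the equality $m_1+m_2-m_3-m_4=0$ by introducing a non-negative slack variable $m_4$; the coefficient matrix of this equation is exactly $\Phi=(1,1,-1,-1)$, the matrix treated in Example~\ref{example cone}. Killing the slack variable by setting $X_4=1$ in $E_\Phi(\bfX)$ recovers the desired series with $X_1=p^{-s}$, $X_2=p^{1-s}$, $X_3=p^{2-s}$. Feeding these specialisations into the explicit rational expression for $E_\Phi$ recorded in~\eqref{generating function heisenberg} yields
$$\zeta_{L_p}(s)=\frac{1-p^{3-3s}}{(1-p^{-s})(1-p^{1-s})(1-p^{2-2s})(1-p^{3-2s})}.$$

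All that remains is to match this against $\zeta_p(w)=1/(1-p^{-w})$: each denominator factor $(1-p^{a-bs})$ becomes $\zeta_p(bs-a)$, while the numerator $1-p^{3-3s}$ becomes $\zeta_p(3s-3)^{-1}$. Reading off the claimed product is then immediate. There is essentially no genuine obstacle here, since every non-trivial step has already been carried out in Section~\ref{subsection counting} or in Example~\ref{example cone}; the task is purely one of careful book-keeping for the variable substitution $X_i\mapsto p^{a_i-s}$ and for the signs and indices in the translation between polynomials in $p^{-s}$ and local Riemann zeta factors.
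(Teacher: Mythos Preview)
Your proposal is correct and follows exactly the route the paper takes: the proposition is stated there as a summary of the computation in Section~\ref{subsection counting} combined with the explicit generating function~\eqref{generating function heisenberg} obtained from Example~\ref{example cone}, and the only remaining step is the substitution $X_i\mapsto p^{i-1-s}$ that you describe. There is no separate proof in the paper beyond this assembly, so your account matches it essentially verbatim.
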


We have thus expressed the local factors of the zeta function of the
Heisenberg Lie ring in terms of the generating function associated
with a linear homogeneous equation (or, equivalently, a rational
polyhedral cone). The feasibility of this approach was a direct
consequence of the divisibility condition~\eqref{div condition
  heisenberg}. In general, things are not that simple, as the
following example shows.

\begin{example}\label{cone conditions sl2}
Let us reconsider the Lie ring $\sltwo(\Z)$ from Example~\ref{example
  sl2}. Fix a prime $p$. It is not hard (and a recommended exercise;
cf.~\cite{duSTaylor/02}) to show that the coset $\mcU M$ of a matrix
$M\in\trthreezp$ encodes the coordinates of generators of a subring of
$\sltwo(\Zp)$ if and only if
\begin{align}
v_p(M_{22})&\leq v_p(4M_{12}M_{23}),\nonumber\\
v_p(M_{22})&\leq v_p(4M_{12}M_{33})\text{ and }\nonumber\\
v_p(M_{22}M_{33})&\leq v_p(M_{11}M_{22}^2+4M_{22}M_{13}M_{23}-4M_{12}M_{23}^2).\label{div condition sl2}
\end{align}
\end{example}

In general, the condition for a coset to define a subalgebra may be
described by a finite number of inequalities in the $p$-adic values of
polynomials in the matrix entries. If these polynomials are
\emph{monomials} (as is the case for the Heisenberg Lie ring;
cf.~\eqref{div condition heisenberg}), the computation of the local
zeta function reduces to the computation of the generating function of
a rational polyhedral \emph{cone} (or system of linear homogeneous
diophantine equations).
%Example~\ref{example sl2}
%illustrates the general case that the polynomials in question are not
%monomials. In this case 
In general, a resolution of singularities -- a tool from algebraic
geometry -- may be used to remedy the situation. It allows for a
partition of the domain of integration into pieces on which the
integral may be expressed in terms of generating functions of
polyhedral cones. The pieces are indexed by the $\Fp$-points of
certain algebraic varieties defined over~$\Fp$. These kinds of
$p$-adic integrals, called \emph{cone integrals}, were introduced
in~\cite{duSG/00}. A comprehensive introduction to cone integrals may
be found in~\cite[Sections 4 and 5]{duSSegal/00}.

The description of local zeta functions of groups and rings in terms
of cone integrals has far reaching applications for the analysis of
analytic properties of global zeta functions (cf.~Section~\ref{section
global}).

\subsection{Local functional equations}\label{subsection local funeqs}

The zeta functions of the rings we have presented so far as examples
all share a remarkable property: their local factors generically
exhibit a palindromic symmetry on inversion of the prime~$p$. More
precisely, almost all of the Euler factors satisfy a local functional
equation of the form
\begin{equation}\label{funeq}
\zeta_{L,p}(s)|_{p\rightarrow
p^{-1}}=(-1)^ap^{b-cs}\zeta_{L,p}(s),
\end{equation}
where $a,b,c$ are integers which are independent of the prime~$p$.  In
the present section we explain and give an outline of the proof of the
following theorem.

\begin{theorem}\cite[Theorem A]{Voll/06a}\label{theorem voll annals}
Let $L$ be a ring of additive rank $n$. There are smooth {\rm
projective} varieties $V_t$, defined over $\Q$, and rational functions
$W_t(X,Y)$ $\in\Q(X,Y)$, $t\in[m]$, such that, for almost all primes
$p$, the following hold:
\begin{enumerate}
\item \begin{equation}\label{formula local projective}
\zeta_{L,p}(s)=\sum_{t=1}^m b_t(p)W_t(p,p^{-s}),
\end{equation}
where $b_t(p)$ denotes the number of $\Fp$-rational points of
$\ol{V_t}$, the reduction modulo $p$ of $V_t$.

\item \label{second point}Setting $b_t(p^{-1}):=p^{-\dim(V_t)}b_t(p)$ the following
functional equation holds:
\begin{equation}\label{funeq annals}
\zeta_{L,p}(s)|_{p\rightarrow p^{-1}}=(-1)^np^{\binom{n}{2}-ns}\zeta_{L,p}(s).
\end{equation}
\end{enumerate}
\end{theorem}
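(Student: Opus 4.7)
The strategy is to combine the $p$-adic integral expression of $\zeta_{L,p}(s)$ with a resolution of singularities (as in the cone integral formalism of du~Sautoy--Grunewald) and then apply Stanley's reciprocity (Theorem~\ref{theorem stanley}) on each resulting cone piece together with Poincar\'e duality for the smooth projective strata that index the pieces.

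First I would adapt the calculation of Section~\ref{subsection counting} to write
\[
\zeta_{L,p}(s)=(1-p^{-1})^{-n}\int_{\mcV(L_p)}\prod_{i=1}^n|M_{ii}|_p^{s-i}\,d\mu,
\]
where $\mcV(L_p)\subseteq\trnzp$ is cut out by finitely many divisibility conditions of the shape $f\mid g$ with $f,g\in\Z[M_{ij}]$ polynomials in the matrix entries (as in Example~\ref{cone conditions sl2}). Then I would apply Hironaka's principalisation to the ideal generated by all the relevant polynomials, obtaining a proper birational morphism $h\colon Y\to\mathbb{A}^{n(n+1)/2}$ whose exceptional divisor $\bigcup_t E_t$ has simple normal crossings, so that each $f$ and $g$ becomes a monomial in local coordinates on~$Y$. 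Pulling back the integral and stratifying $Y(\Fp)$ by the subsets $T$ of components it touches, standard arguments yield, for almost all $p$, a representation
\[
\zeta_{L,p}(s)=\sum_{T}b_T(p)\,\mcZ_T(p,p^{-s}),
\]
where $b_T(p)$ is the number of $\Fp$-points of the smooth locally closed stratum $E_T^\circ=\bigcap_{t\in T}E_t\setminus\bigcup_{t\notin T}E_t$ and $\mcZ_T(p,p^{-s})$ is the generating function of non-negative integral points in a rational polyhedral cone $\mathscr{C}_T$ with weights read off from the vanishing multiplicities of $h^*$ of the monomials and of $\prod_i M_{ii}^{s-i}$. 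Decomposing each $E_T^\circ$ further into the smooth projective varieties $\overline{E_T}$ via inclusion-exclusion puts~(1) into the claimed form \eqref{formula local projective}.

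For (2), the central point is that the substitution $p\mapsto p^{-1}$ acts on each summand $b_T(p)\,\mcZ_T(p,p^{-s})$ in two compatible ways. On the combinatorial side, $\mcZ_T(p,p^{-s})=E_{\mathscr{C}_T}(\mathbf{X})$ for appropriate monomial substitutions $\mathbf{X}=\mathbf{X}(p,p^{-s})$, and Stanley's reciprocity gives $\overline{E}_{\mathscr{C}_T}(\mathbf{X})=(-1)^{\dim\mathscr{C}_T}E_{\mathscr{C}_T}(1/\mathbf{X})$; on the geometric side, the formal prescription $b_T(p^{-1}):=p^{-\dim\overline{E_T}}b_T(p)$ implements Poincar\'e duality for the smooth projective $\overline{E_T}$. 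The bookkeeping task is then to check that, stratum by stratum, the combined effect is multiplication by the \emph{same} factor $(-1)^n p^{\binom{n}{2}-ns}$, so that the two kinds of reciprocity line up to produce the global functional equation \eqref{funeq annals}. The uniform factor arises from the ambient integration: the Jacobian of the inversion on $\trnzp$ produces $p^{\binom{n+1}{2}}$, the integrand $\prod_i|M_{ii}|_p^{s-i}$ contributes $p^{\sum(i-s)}=p^{\binom{n+1}{2}-ns}$, and the $(1-p^{-1})^{-n}$ prefactor contributes $(-1)^n p^{-n}$; together these yield the claimed $(-1)^n p^{\binom{n}{2}-ns}$.

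The step that needs the most care is the second: one must show that the combinatorial reciprocity on $\mathscr{C}_T$ correctly reproduces a passage between $E_T^\circ$ (the non-negative solutions, corresponding to the open stratum) and its closure via inclusion-exclusion of the deeper strata, which on the variety side is precisely what is encoded by $\overline{E_T}$ being smooth and projective. This is where the hypothesis of smoothness and projectivity of the $V_t$ is essential and where the argument is most delicate: the local reciprocities must be glued compatibly across the principalisation so that their contributions assemble into a single functional equation with the global constants $a=n$, $b=\binom{n}{2}$, $c=n$ of \eqref{funeq}. Once this alignment is verified, both assertions of the theorem follow simultaneously.
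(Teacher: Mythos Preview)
Your plan is essentially the du~Sautoy--Grunewald cone-integral strategy with an added hope that Stanley reciprocity on the cones and Poincar\'e duality on the strata will align to produce a uniform factor. This is exactly the approach the paper \emph{does not} take, and for good reason: the step you yourself flag as ``most delicate'' --- that the stratum-wise reciprocities all contribute the \emph{same} factor $(-1)^np^{\binom{n}{2}-ns}$ --- is not a bookkeeping exercise but the heart of the matter, and it does not follow from the affine resolution of $\mcV(L_p)\subset\trnzp$. The cone-integral formalism yields part~(1) (this is Theorem~\ref{theorem duSG}), but there is no mechanism in that framework forcing the individual pieces to transform uniformly under $p\mapsto p^{-1}$; indeed, the cones $\mathscr{C}_T$ arising from an arbitrary principalisation have varying dimensions and the exceptional strata $\overline{E_T}$ varying dimensions too, and nothing ties $(-1)^{\dim\mathscr{C}_T}$ and $p^{-\dim\overline{E_T}}$ together. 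Your heuristic computation of the global factor from ``the Jacobian of the inversion on $\trnzp$'' is not meaningful: $p\mapsto p^{-1}$ is a formal operation on the rational function, not a change of variable admitting a Jacobian.

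The paper's proof replaces your one-step affine resolution by a two-step decomposition with a built-in symmetry. First, lattices are organised by the elementary-divisor type $I\subseteq[n-1]$ of their homothety class, which brings in the Gaussian polynomials $\binom{n}{I}_{p^{-1}}$ counting $\Fp$-flags; this writes $\zeta_{L,p}(s)=(1-p^{-ns})^{-1}\sum_{I\subseteq[n-1]}\binom{n}{I}_{p^{-1}}W_I(p^{-s})$. Second, each $W_I$ is a $p$-adic integral over $(p\Zp)^{|I|}\times\GL_n(\Zp)$ in which the $\bfx$-variables already enter monomially, so principalisation is needed only in the $\bfy$-variables on the homogeneous space $\GL_n/B$. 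After principalisation, a generalised Stanley reciprocity (Proposition~\ref{proposition stanley}) gives the \emph{inversion property} $W_I|_{p\to p^{-1}}=(-1)^{|I|}\sum_{J\subseteq I}W_J$. The functional equation then follows from a purely Coxeter-theoretic identity (Proposition~\ref{proposition IP}): the interplay of the longest element $w_0\in S_n$ with the length and descent-set statistics forces $\sum_I\binom{n}{I}_{p^{-1}}W_I$ to transform by exactly $(-1)^{n-1}p^{\binom{n}{2}}$. This Coxeter-group step is the missing idea in your plan; without it there is no reason for the local reciprocities to conspire to a single global symmetry.
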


Note that the advance of Theorem~\ref{theorem voll annals} over
Theorem~\ref{theorem duSG} consists in the assertion~\eqref{second
  point}. The notation `$p\rightarrow p^{-1}$' needs some
justification. If~$b_t(p)$ is a polynomial in $p$, $b_t(p^{-1})$ is
with the rational number obtained by evaluating this polynomial at
$p^{-1}$. This follows from the fact that the varieties
$\overline{V_t}$ are smooth and projective. In general, the above
definition is motivated by properties of the numbers of $\Fp$-rational
points of such varieties, which follow from the Weil conjectures. More
precisely, let $V$ be a smooth projective variety defined over the
finite field $\Fp$. By deep properties of the Hasse-Weil zeta function
associated with~$V$, there are complex numbers $\alpha_{rj}$, $0\leq
r\leq 2\dim(V)$, $1\leq j\leq t_r$ for suitable non-negative integers
$t_r$, such that the number $b_V(p)$ of $\Fp$-rational points of~$V$
can be written as
\begin{equation}\label{frobenius eigenvalues}
b_V(p)=\sum_{r=0}^{2\dim(V)}(-1)^r \sum_{j=1}^{t_r}\alpha_{rj}.
\end{equation}
(Note that the numbers $t_r$ may well be zero; cf.~the examples given in Section~\ref{subsection flags}.)  Furthermore, for each
$r\in[2\dim(V)]_0$ the \ul{multi}sets
$$\left\{\alpha_{rj}|\;j\in [t_{2\dim(V)-r}]\right\} \text{ and
}\left\{\frac{p^{\dim(V)}}{\alpha_{rj}}|\;j\in [t_r]\right\}$$
coincide. Thus,
$$b_V(p^{-1}):=p^{-\dim(V)}b_V(p)=\sum_{r=0}^{2\dim(V)}(-1)^r
\sum_{j=1}^{t_r}\alpha_{rj}^{-1}$$
may be interpreted as the
expression we obtain by inverting the terms $\alpha_{rj}$
in~\eqref{frobenius eigenvalues} (even if they are not, in general,
powers of the prime $p$).

Before we give an outline of the proof of Theorem~\ref{theorem
voll annals}, let us revisit Example~\ref{example ec}.

\begin{example} \label{example ec projective}
Let $E$ denote the elliptic curve defined by the equation
$y^2=x^3-x$. For a prime $p$, denote this time by $b(p)$ the number of
\emph{projective} points of $E$ over $\Fp$, i.e.
$$b(p):=|\{(x:y:z)\in\mathbb{P}^2(\Fp)|\;y^2z=x^3-xz^2\}|.$$ Clearly
$b(p)=c(p)+1$, where $c(p)$ was defined in Example~\ref{example ec}:
we simply add the point $(0:1:0)$ `at infinity'. The results quoted
there imply that 
$$b(p)=
\begin{cases}1+p\text{ if }p\equiv3\mod(4)\text{ and}\\
1-(\pi+\ol{\pi})+p\text{ otherwise,}\end{cases}$$ where
$\pi\ol{\pi}=p$. Note that this last equation implies that
$\pi^{-1}=\ol{\pi}/p$ and $\ol{\pi}^{-1}=\pi/p$, so that
$$
1-(\pi^{-1}+\ol{\pi}^{-1})+p^{-1}=p^{-1}(p-(\ol{\pi}+\pi)+1)=p^{-1}b(p)=b(p)|_{p\rightarrow
p^{-1}},$$ by definition of the latter.
\end{example}

\emph{Outline of proof of Theorem~\ref{theorem voll annals}:} (For
details see~\cite[Sections~2 and~3]{Voll/06a}.) The proof falls into
two parts. The first is of a combinatorial and Coxeter group theoretic
nature. It consists in proving the following general result about
generating functions.

\begin{proposition}\label{proposition IP}
Let $n\in\N$ and, let $(W_I(p^{-s}))_{I\subseteq[n-1]}$ be a family of
functions in $p^{-s}$ with the property that
\begin{equation}\label{equation IP}
 \forall I\subseteq[n-1]:\;W_I(p^{-s})|_{p\rightarrow
 p^{-1}}=(-1)^{|I|}\sum_{J\subseteq I}W_J(p^{-s}).
\end{equation}
Then the function 
\begin{equation}\label{equation igusa funeq}
W(p^{-s}):=\sum_{I\subseteq[n-1]}\binom{n}{I}_{p^{-1}}W_I(p^{-s})
\end{equation} 
(with the polynomials $\binom{n}{I}_{X}$ defined as
in~\eqref{definition flag polynomials}) satisfies
\begin{equation}\label{equation funeq} 
  W(p^{-s})|_{p\rightarrow p^{-1}}=(-1)^{n-1}p^{\binom{n}{2}}W(p^{-s}).
\end{equation}
\end{proposition}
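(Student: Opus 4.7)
The plan is to expand $W(p^{-s})|_{p\to p^{-1}}$ directly, applying the hypothesis \eqref{equation IP} to each $W_I(p^{-s})|_{p\to p^{-1}}$ and the identity $\binom{n}{I}_{p^{-1}}|_{p\to p^{-1}} = \binom{n}{I}_p$ to the binomial factors, and then to swap the order of summation so that everything reduces to understanding the alternating sum
\[
A_J(p) := \sum_{I : J \subseteq I \subseteq [n-1]} (-1)^{|I|} \binom{n}{I}_p.
\]
The target identity \eqref{equation funeq} will follow once we establish
\[
A_J(p) = (-1)^{n-1} p^{\binom{n}{2}} \binom{n}{J}_{p^{-1}},
\]
since then $W(p^{-s})|_{p\to p^{-1}} = \sum_J W_J(p^{-s}) A_J(p) = (-1)^{n-1} p^{\binom{n}{2}} W(p^{-s})$.

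To prove this key identity, I would feed Proposition~\ref{proposition binomial length} into $A_J(p)$ and exchange the summation over $I$ and over $w \in S_n$:
\[
A_J(p) = \sum_{w \in S_n} p^{\ell(w)} \sum_{I : J \cup D_L(w) \subseteq I \subseteq [n-1]} (-1)^{|I|}.
\]
The inner sum is the standard inclusion–exclusion alternating sum over an interval in the Boolean lattice; it vanishes unless $J \cup D_L(w) = [n-1]$, in which case it equals $(-1)^{n-1}$. Thus only permutations $w$ with $D_L(w) \supseteq [n-1] \setminus J$ contribute, giving
\[
A_J(p) = (-1)^{n-1} \sum_{w : D_L(w) \supseteq [n-1]\setminus J} p^{\ell(w)}.
\]

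The remaining step is to convert this into a sum over permutations whose descent set is \emph{contained} in $J$, which will then reproduce $\binom{n}{J}_{p^{-1}}$ (up to the prefactor $p^{\binom{n}{2}}$) via Proposition~\ref{proposition binomial length}. For this I would use the involution $w \mapsto w_0 w$ on $S_n$, where $w_0$ is the longest element: a direct check shows $D_L(w_0 w) = [n-1] \setminus D_L(w)$ and $\ell(w_0 w) = \binom{n}{2} - \ell(w)$. Substituting $w = w_0 w'$ turns the condition $D_L(w) \supseteq [n-1]\setminus J$ into $D_L(w') \subseteq J$ and replaces $p^{\ell(w)}$ by $p^{\binom{n}{2}} p^{-\ell(w')}$, so Proposition~\ref{proposition binomial length} (applied with $p^{-1}$ in place of $q$) yields
\[
A_J(p) = (-1)^{n-1} p^{\binom{n}{2}} \binom{n}{J}_{p^{-1}}.
\]

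The only delicate step is this last combinatorial identity; the rest is bookkeeping and the careful application of hypothesis \eqref{equation IP} under the inversion $p \to p^{-1}$. I do not expect any real obstacle, but one has to be attentive to the fact that $W_I(p^{-s})$ is regarded as a function of $p^{-s}$ with $p$ as an independent parameter, so that the substitution $p \to p^{-1}$ acts only on the explicit $p$'s in the binomial coefficients and in the right-hand side of \eqref{equation IP}, not on the symbol $p^{-s}$.
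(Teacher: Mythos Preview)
Your approach is essentially the same as the paper's: both expand via Proposition~\ref{proposition binomial length}, collapse an alternating Boolean-interval sum, and then apply the involution by the longest element~$w_0$; the paper merely packages the alternating-sum step separately as Lemma~\ref{lemma BLMS} (proved for an arbitrary $I$ in place of your $D_L(w)$) before running the same chain of equalities. One small slip to fix: the involution that complements \emph{left} descents is right multiplication, $w\mapsto ww_0$, which gives $D_L(ww_0)=[n-1]\setminus D_L(w)$ and $\ell(ww_0)=\binom{n}{2}-\ell(w)$ as in~\eqref{length inversion}--\eqref{descent type inversion}; with left multiplication one only gets the \emph{reflected} complement $D_L(w_0w)=\{\,i:n-i\notin D_L(w)\,\}$, so your ``direct check'' would fail as written---replace $w_0w$ by $ww_0$ and the rest goes through verbatim.
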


\begin{remark}
We do not need to specify the operation $p\rightarrow p^{-1}$
in~\eqref{equation IP} at this stage; the left hand sides of these
equations could be \emph{defined} in terms of the right hand sides. We
do not assume the functions $W_I(p^{-s})$ to be rational
in~$p^{-s}$. In practice, we will apply Proposition~\ref{proposition
  IP} to families of rational functions $W_I(p^{-s})$ which are
themselves of the form~\eqref{formula local projective}, and we define
$p\rightarrow p^{-1}$ as in Theorem~\ref{theorem voll annals}. What is
understood, however, is that the inversion of the prime extends
linearly to $W(p^{-s})$, and that, of course,
$\binom{n}{I}_{p^{-1}}|_{p\rightarrow p^{-1}}=\binom{n}{I}_{p}$.
\end{remark}

\begin{proof}[Proof of Proposition~\ref{proposition IP}] 
We utilise the Coxeter group theoretic description of the numbers
$\binom{n}{I}_p$ given in Proposition~\ref{proposition binomial
length}. It is a well-known fact~(\cite[Section 1.8]{Humphreys/90})
that there is a unique longest element $w_0\in S_n$, namely the
inversion, such that, for all $w\in S_n$,
\begin{equation}\label{length inversion}
\ell(w)+\ell(ww_0)=\ell(w_0)=\binom{n}{2}
\end{equation} and
\begin{equation}\label{descent type inversion}
D_L(ww_0)=D_L(w)^c.
\end{equation}
Here, given $I\subseteq[n-1]$ we write $I^c$ for $[n-1]\setminus
I$. We also need the following Lemma.
\begin{lemma}\cite[Lemma 7]{VollBLMS/06}\label{lemma BLMS} 
Under the hypotheses of Proposition \ref{proposition IP}, for all
$I\subseteq[n-1]$,
$$\sum_{I\subseteq J}W_J(p^{-s})|_{p\rightarrow
p^{-1}}=(-1)^{n-1}\sum_{I^c\subseteq J}W_J(p^{-s}).$$
\end{lemma}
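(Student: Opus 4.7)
The plan is to substitute the hypothesis of Proposition \ref{proposition IP} into the left-hand side, swap the order of summation, and collapse the resulting inner sum via a standard alternating-sign identity.

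First, I would apply inversion termwise on the left-hand side, using equation \eqref{equation IP}: for each $J\supseteq I$,
\[
W_J(p^{-s})|_{p\to p^{-1}}=(-1)^{|J|}\sum_{K\subseteq J}W_K(p^{-s}).
\]
Summing over $J$ with $I\subseteq J\subseteq[n-1]$ gives
\[
\sum_{I\subseteq J}W_J(p^{-s})|_{p\to p^{-1}}=\sum_{K\subseteq[n-1]}W_K(p^{-s})\sum_{\substack{J\supseteq I\\ J\supseteq K}}(-1)^{|J|},
\]
where I have exchanged the two summations and grouped the outer sum by $K$. The combined condition $J\supseteq I$ and $J\supseteq K$ is just $J\supseteq I\cup K$.

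Next I would evaluate the inner alternating sum. Setting $M:=I\cup K$ and parametrising $J=M\cup J'$ with $J'\subseteq[n-1]\setminus M$,
\[
\sum_{J\supseteq M}(-1)^{|J|}=(-1)^{|M|}\sum_{J'\subseteq[n-1]\setminus M}(-1)^{|J'|}=(-1)^{|M|}(1-1)^{n-1-|M|},
\]
which vanishes unless $|M|=n-1$, i.e.\ $M=[n-1]$, in which case it equals $(-1)^{n-1}$. The condition $I\cup K=[n-1]$ is equivalent to $K\supseteq I^c$. Substituting this back produces
\[
\sum_{I\subseteq J}W_J(p^{-s})|_{p\to p^{-1}}=(-1)^{n-1}\sum_{K\supseteq I^c}W_K(p^{-s}),
\]
which is the claimed identity.

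No step here looks genuinely obstructive: the argument is pure bookkeeping with subsets, and the only ingredient beyond the hypothesis is the vanishing of $(1-1)^{n-1-|M|}$ for $|M|<n-1$. The one place to be a little careful is the interchange of summations, but this is unproblematic because both sums range over finite subsets of $[n-1]$. I would therefore expect the proof to be short and clean, with the substantive content lying entirely in the hypothesis \eqref{equation IP} that feeds the lemma.
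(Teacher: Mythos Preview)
Your proof is correct and follows essentially the same approach as the paper: substitute the hypothesis~\eqref{equation IP}, interchange the two finite sums, and collapse the inner alternating sum via $(1-1)^{|(I\cup K)^c|}$ to see that only the terms with $K\supseteq I^c$ survive. The paper's argument is line-for-line the same, merely with different letters for the dummy sets.
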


\begin{proof} We have
\begin{equation*}
\sum_{I\subseteq J}W_J(p^{-s})|_{p\rightarrow p^{-1}}=\sum_{I\subseteq
J}(-1)^{|J|}\sum_{S\subseteq J}W_S(p^{-s})=\sum_{R\subseteq[n-1]}c_R
W_R(p^{-s}),
\end{equation*}
say, where 
\begin{multline*}
c_R=\sum_{R\cup I\subseteq J}(-1)^{|J|}=(-1)^{|R\cup
I|}\sum_{S\subseteq(R\cup I)^c}(-1)^{|S|}\\=(-1)^{|R\cup
I|}(1-1)^{|R\cup I|^c}=\begin{cases} (-1)^{n-1}&\text{ if
}R\supseteq I^c,\\0&\text{ otherwise.}\end{cases}
\end{multline*}
This proves Lemma~\ref{lemma BLMS}.
\end{proof}
We compute
\begin{align*}
W(p^{-s})|_{p\rightarrow
p^{-1}}&=\sum_{I\subseteq[n-1]}\binom{n}{I}_pW_I(p^{-s})|_{p\rightarrow
p^{-1}}&&\eqref{equation igusa funeq}\\
&=\sum_{I\subseteq[n-1]}\left(\sum_{w\in S_n,\,D_L(w)\subseteq
I}p^{\ell(w)}\right)W_I(p^{-s})|_{p\rightarrow
p^{-1}}&&\text{Prop. \ref{proposition binomial length}}\\ &=\sum_{w\in
S_n}p^{\binom{n}{2}-\ell(ww_0)}\sum_{D_L(w)\subseteq
I}W_I(p^{-s})|_{p\rightarrow p^{-1}}&&\eqref{length inversion}\\
&=(-1)^{n-1}p^{\binom{n}{2}}\sum_{w\in
S_n}p^{-\ell(ww_0)}\sum_{D_L(ww_0)\subseteq I}W_I(p^{-s})&&\text{Lemma
\ref{lemma BLMS}}, \eqref{descent type inversion}\\
&=(-1)^{n-1}p^{\binom{n}{2}}\sum_{I\subseteq[n-1]}\left(\sum_{w\in
S_n,\,D_L(ww_0)\subseteq I}p^{-\ell(ww_0)}\right)W_I(p^{-s})\\
&=(-1)^{n-1}p^{\binom{n}{2}}W(p^{-s}).
\end{align*}
This proves Proposition~\ref{proposition IP}.
\end{proof}

We now proceed to the second part of the proof of Theorem~\ref{theorem
voll annals}. It consists in proving that the local zeta function
$\zeta_{L,p}(s)$ of a ring $L$ of additive rank $n$ may be written as
$$(1-p^{-ns})^{-1}W(p^{-s}),$$ where $W(p^{-s})$ is of the
form~\eqref{equation igusa funeq} for suitable (rational) functions
$W_I(p^{-s})$, satisfying the hypotheses~\eqref{equation IP} of
Proposition~\ref{proposition IP}.  This will require both
algebro-geometric and combinatorial methods (which are similar to but
markedly different from the ones used to study cone integrals). It may
be instructive to see this done in a familiar special case first.

\begin{example} \label{example igusa abelian}
  We will see below in Example~\ref{example igusa abelian II} that
  the local zeta functions of the abelian group $\Z^n$ may be written
  as
$$\zeta_{\Z^n,p}(s)=\frac{1}{1-X_n}\sum_{I\subseteq[n-1]}\binom{n}{I}_{p^{-1}}\prod_{\iota\in
I}\frac{X_\iota}{1-X_\iota},$$ where, for $i\in[n]$, $X_i:=p^{i(n-i)-is}$.
One checks immediately that the functions
\begin{equation}\label{WI abelian}
W_I(p^{-s}):=\prod_{\iota\in I}\frac{X_\iota}{1-X_\iota}
\end{equation}
satisfy~\eqref{equation IP}. Indeed, the operation $p\rightarrow
p^{-1}$ simply amounts to an inversion of the `variables' $X_i$, as
they are monomials in~$p$ and $p^{-s}$, and
$$\frac{X^{-1}}{1-X^{-1}}=-\left(1+\frac{X}{1-X}\right).$$ More
conceptually, the validity of the equations~\eqref{equation IP} may be
regarded as a consequence of Theorem~\ref{theorem stanley} in the
special case studied in Example~\ref{example trivial cone}, as we may
view $W_I(p^{-s})$ as obtained from the rational generating function
in variables $X_i$, counting positive integral solutions of an (empty)
set of linear homogeneous diophantine equations in $|I|$ variables,
where the variables $X_i$ are substituted by certain monomials in $p$
and $p^{-s}$. A variation of this basic idea will be crucial for the
proof of Theorem~\ref{theorem voll annals}.
\end{example}

Given a prime $p$, our task is to enumerate full additive sublattices
of the $n$-dimensional $\Zp$-algebra $L_p\subset\Qp\otimes L_p$ which
are subalgebras, i.e.~which are closed under multiplication. It is
easy to see that, given such a lattice $\L$, there is a unique lattice
$\L_0$ in the \emph{homothety class} $[\L]:=\Qp^*\L$ of $\L$ such that
the subalgebras contained in $[\L]$ are exactly the multiples
$p^m\L_0$, $m\in\N_0$. Indeed, given any sublattice $\L$ of $L_p$, and
$e\in\Z$, clearly $(p^e\L)^2\subseteq p^e\L$ if and only if $p^e\L^2
\subseteq \L$. Let $e_0:=\min\{e\in\Z|\;p^{e}\L\subseteq L_p\text{ and
}p^{e}\L^2 \subseteq \L\}$, and set $\L_0:=p^{e_0}\L$. Evidently,
$\L_0$ only depends on the homothety class of~$\L$. We thus have
$$\zeta_{L_p}(s)=(1-p^{-ns})^{-1}\sum_{[\L]}|L_p:\L_0|^{-s}.$$ We set
$$W(p^{-s}):=\sum_{[\L]}|L_p:\L_0|^{-s}.$$ It remains to show that
$W(p^{-s})$ is of the form~\eqref{equation igusa funeq}, with rational
functions $W_I(p^{-s})$ to which Proposition~\ref{proposition IP} is
applicable. We will achieve this by first partitioning the set of
homothety classes of lattices into finitely many parts, indexed by the
subsets $I$ of $[n-1]$, reflecting (aspects of) their elementary
divisor types. On each of these parts, we will describe the indices
$|L_p:\L_0|$ in terms of algebraic congruences, and then encode the
numbers of solutions to these congruences in terms of a suitable
$p$-adic integral $W_I(p^{-s})$ so that the family
$(W_I(p^{-s}))_{I\subseteq[n-1]}$ satisfies the `inversion
properties'~\eqref{equation IP}. The proof of the latter will require
sophisticated methods from algebraic geometry, which we can only
sketch here.

The reader will note the analogy with the proof of
equation~\eqref{equation igusa}, which also proceeded by expressing
the numbers of certain congruences in terms of the Haar measure of
suitable sets.

We recall from Section~\ref{subsection counting} that lattices in
$L_p$ are in $1-1$-correspondence with cosets $\G M$, where
$\G=\GL_n(\Zp)$ and $M\in\trnzp$, where the rows of $M$ encode
coordinates of generators of $\L$ with respect to a fixed basis
$(l_1,\dots,l_n)$ for $L_p$ as $\Zp$-module. For $r\in[n]$, let $C_r$
denote the matrix of the linear map given by right-multiplication with
the basis element~$l_r$ with respect to this basis. It is then not
hard to show (cf.~the proof of \cite[Theorem 5.5]{duSG/00}) that the
lattice corresponding to the coset $\G M$ is a sub\emph{algebra} if
and only if
\begin{equation}\label{subalgebra condition}
\forall i,j\in[n]:\; M_i\sum_{r\in[n]} C_r m_{jr} \in\langle
M_k|\;k\in[n]\rangle_{\Zp},
\end{equation}
where $M_i$ denotes the $i$-th row of $M$. This condition is easy to
check if $M$ may be chosen to be diagonal; in this case,
condition~\eqref{subalgebra condition} is satisfied if, for all
$k\in[n]$, the $k$-th entries of all the vectors on the left hand side
are divisible by $M_{kk}$, the $k$-th diagonal entry of $M$. In
general, however, the coset $\G M$ will not contain a diagonal
element. One way around this is to choose a different basis
for~$L_p$. Indeed, by the Elementary Divisor Theorem, the coset $\G M$
does contain an element of the form $D\alpha^{-1}$, where
$\alpha\in\G$ and
$$
D=D(I,\bfr_0)=p^{r_0}\,\text{diag}(\underbrace{\underbrace{p^{\sum_{\iota\in
I}r_\iota},\dots,p^{\sum_{\iota\in
I}r_\iota}}_{i_1},\dots,p^{r_{i_l}},\dots,p^{r_{i_l}}}_{i_l},1,\dots,1)$$
for a set $I=\{i_1,\dots,i_l\}_<\subseteq[n-1]$ and a vector
$(r_0,r_{i_1},\dots,r_{i_l})=:\bfr_0\in\N_0\times\N^l$ (both depending
only on $\G M$). Setting $\bfr:=(r_{i_1},\dots,r_{i_l})$, we say that
the homothety class $[\Lambda]$ of $\Lambda$ is of type $(I,\bfr)$ (or
sometimes, by abuse of notation, of type $I$) and write
$\nu([\Lambda])=(I,\bfr)$ (or $\nu([\Lambda])=I$, respectively). The
matrix $\alpha$ is only unique up to right-multiplication by an
element of
$$\G_{I,\bfr}:=\left\{\left(\begin{tabular}{c|c|c|c|c}
$\gamma_{i_1}$&$*$&$\cdots$&$*$&$*$\\\hline
$ p^{r_{i_1}}*$ &$\gamma_{i_2-i_1}$&$\ddots$&$\vdots$&\\\hline
$p^{r_{i_1}+r_{i_2}}*$&$p^{r_{i_2}}*$&$\ddots$&$*$&$\vdots$\\\hline
$\vdots$&$\vdots$&$\ddots$&$\gamma_{i_l-i_{l-1}}$&$*$\\\hline
$p^{r_{i_1}+\dots+r_{i_l}}*$&$p^{r_{i_2}+\dots+r_{i_l}}*$&$\cdots$&$p^{r_{i_l}}*$&$\gamma_{n-i_l}$
\end{tabular}\right)\right\},$$
where $\gamma_i\in\G_i:=\GL_i(\Zp)$, and $*$ stands for an arbitrary
matrix with entries in~$\Zp$ of the respective size. As an immediate
and useful corollary, we deduce a formula for the number of lattices
of given type $(I,\bfr)$:
\begin{equation}\label{equation index}
\left|\left\{[\L]|\;\nu([\L])=(I,\bfr)\right\}\right|=|\G:\G_{I,\bfr}|=\mu(\G)/\mu(\G_{I,\bfr})=\binom{n}{I}_{p^{-1}}p^{\sum_{\iota\in
    I}r_\iota \iota(n-\iota)}.
\end{equation} 
Here, $\mu$ denotes the Haar measure on the group $\G$ normalised so
that $\mu(\G)=(1-p^{-1})\cdots(1-p^{-n})$. It is a crucial observation
that it coincides with the restriction of the additive Haar measure on
$\text{Mat}_n(\Zp)\cong\Zp^{n^2}$, normalised so that
$\mu(\text{Mat}_n(\Zp))=1$.

We consider the $n\times n$-matrix of $\Z$-linear forms
$$\mcR(\bfy)=(\tuL_{ij}(\bfy))\in\text{Mat}_n(\Z[\bfy]),$$ where
$\tuL_{ij}(\bfy):=\sum_{k\in[n]}\lambda_{ij}^ky_k$, encoding the
structure constants $\lambda_{ij}^k$ of $L$ with respect to the chosen
basis, that is $l_il_j=\sum_{k\in[n]}\lambda_{ij}^kl_k$. 
Right-multiplication by $\alpha$ now yields that the subalgebra
condition~\eqref{subalgebra condition} is equivalent to
\begin{equation}\label{subalgebra
conditionII} \forall i\in[n]:\; D \mcR_{(i)}(\alpha) D \equiv 0 \mod
(D_{ii}),
\end{equation}
where $\mcR_{(i)}(\alpha):=\alpha^{-1}\mcR(\alpha[i])(\alpha^{-1})^t$,
as a quick calculation shows. (Here we write $\alpha[i]$ for the
$i$-th column of the matrix~$\alpha$.) Considering these matrix
congruences modulo a common modulus, this is equivalent to
\begin{multline}
\forall
  i,r,s\in[n]:\\(\mcR_{(i)}(\alpha))_{rs}\,p^{r_0+\sum_{s\leq\iota\in
  I}r_\iota + \sum_{r\leq\iota\in I}r_\iota + \sum_{i>\iota\in
  I}r_\iota}\equiv 0 \mod (p^{\sum_{\iota\in
  I}r_\iota})\label{subalgebra conditionIII}
\end{multline}
which may in turn be
  reformulated as%\m{subalgebra conditionIV}
\begin{multline}\label{subalgebra conditionIV}
r_0\geq\\\sum_{\iota\in I}r_\iota-\underbrace{\min\left\{\sum_{\iota\in
I}r_\iota,\sum_{s\leq\iota\in I}r_\iota + \sum_{r\leq\iota\in I}r_\iota +
\sum_{i>\iota\in
I}r_\iota+v_{irs}(\alpha)|\,(i,r,s)\in[n]^3\right\}}_{=:m([\L])},
\end{multline}
where
$v_{irs}(\alpha):=\min\left\{v_p\left((\mcR_{(\iota)}(\alpha))_{\rho
\sigma}\right)|\iota\leq i,\rho\geq r,\sigma\geq s\right\}$.

We observe that this description of the quantity $m([\Lambda])$ is in
terms which are linear in the $(r_\iota)_{\iota\in I}$ and terms
$v_{irs}(\alpha)$, which only depend on $\alpha$. Moreover, by
construction the $v_{irs}(\alpha)$ only depend on the coset $\alpha
B$, where $B\subset\GL_n(\Zp)$ is the Borel subgroup of
upper-triangular matrices. (This is the purpose of using inequalities
rather than equalities in their definition.) As in the proof of the
identity~\eqref{equation igusa}, this allows us to express the numbers
of lattice classes $[\Lambda]$ of given type $\nu([\Lambda])$ and
invariant $m([\Lambda])$ in terms of the Haar measure of the set on
which the integrand of a certain $p$-adic integral is constant. More
precisely, we set, for $(i,r,s)\in[n]^3$,
$$\bff_{irs}(\bfy):=\{(\mcR_{(\iota)}(\bfy))_{\rho\sigma}|\;\iota\leq
  i,\rho\geq r,\sigma\geq s\}$$ and
\begin{multline}\label{ZI formula}Z_I((s_\iota)_{\iota\in I},s_n):=\\\int_{p\Zp^{l}\times\G}\prod_{\iota\in I}|x_\iota|_p^{s_\iota}\left\|\left\{\prod_{\iota\in I}x_i\right\}\cup \bigcup_{(i,r,s)}\left(\prod_{\iota\in
  I}x_\iota^{\delta_{\iota\geq r}+\delta_{\iota\geq
  s}+\delta_{\iota<i}}\right)\bff_{irs}(\bfy)\right\|_p^{s_n}d\bfx_I
  d\bfy.
\end{multline}

(Here, we extended the $p$-adic absolute value to a set $\mcS$ of
$p$-adic numbers by setting $\|S\|_p:=\min\{|s|_p|\;s\in\mcS\}$. We
denoted by $d\bfx_I=dx_{i_1}\cdots dx_{i_l}$ the Haar measure on
$p\Zp^l$.) This $p$-adic integral has been expressly set up so that,
for each $I\subseteq[n-1]$,
\begin{align*}
\sum_{\nu([\L])=I}|L_p:\L_0|^{-s}=\binom{n}{I}_{p^{-1}}W_I(p^{-s}),
\end{align*}
say, where
$$W_I(p^{-s}):=\frac{Z_I((s(\iota+n)-\iota(n-\iota)-1)_{\iota\in
I},-sn)}{(1-p^{-1})^{l}\mu(\G)},$$ so that
$$W(p^{-s})=\sum_{I\subseteq[n-1]}\binom{n}{I}_{p^{-1}}W_I(p^{-s}).$$
We would like to establish that the functions $W_I(p^{-s})$ satisfy the
inversion property~\eqref{equation IP}. Let us first confirm this in
the abelian case.

\begin{example}[abelian groups revisited] \label{example igusa abelian II} 
If $L_p$ is abelian, i.e.~if the multiplication on $L_p$ is trivial,
all the sets of polynomials $\bff_{irs}$ are equal to $\{0\}$,
so~\eqref{ZI formula} takes the form
\begin{multline*}
Z_I((s_\iota)_{\iota\in I},s)=\int_{(p\Zp)^l\times\G}\prod_{\iota\in
I}|x_\iota|_p^{s_\iota+s_n}d\bfx_I d\bfy\\=\mu(\G)\prod_{\iota\in
I}\int_{p\Zp}|x_\iota|_p^{s_\iota+s_n}dx_\iota=\mu(\G)(1-p^{-1})^l\prod_{\iota\in
I}\frac{p^{-1-s_\iota-s_n}}{1-p^{-1-s_\iota-s_n}}
\end{multline*}
and thus (cf.~\eqref{WI abelian})
\begin{equation}\label{WI abelian II}
W_I(p^{-s})=\prod_{\iota\in
I}\frac{p^{\iota(n-\iota)-s\iota}}{1-p^{\iota(n-\iota)-s\iota}}.
\end{equation}
(Of course we could have deduced this immediately from~\eqref{equation
index}, avoiding any reference to $p$-adic integrals.)
\end{example}

We note that in the formula~\eqref{ZI formula}, the variables $\bfx$
enter \emph{monomially}. If the same were true for the variables
$\bfy$, the inversion properties~\eqref{equation IP} would follow from
the following proposition, generalising a result of Stanley:

\begin{proposition}\cite[Proposition~2.1]{Voll/06a}\label{proposition stanley}
  Let $s,t\in\N_0$ and, for $\sigma\in[s]$, $\tau\in[t]$, let
  $L_{\sigma\tau}(\bfn)$ be $\Z$-linear forms in the variables
  $n_1,\dots,n_r$. Let $X_1,\dots,X_r$, $Y_1,\dots,Y_s$ be independent
  variables and set
\begin{align*}
  Z^\circ(\bfX,\bfY):=&\sum_{\bfn\in\N^r}\prod_{\rho\in[r]}
  X_\rho^{n_\rho}\prod_{\sigma\in[s]}Y_\sigma^{\min_{\tau\in[t]}\{L_{\sigma\tau}(\bfn)\}},\\
  Z(\bfX,\bfY):=&\sum_{\bfn\in\N_0^r}\prod_{\rho\in[r]}
  X_\rho^{n_\rho}\prod_{\sigma\in[s]}Y_\sigma^{\min_{\tau\in[t]}\{L_{\sigma\tau}(\bfn)\}}.
\end{align*}
Then
$$Z^\circ(\bfX^{-1},\bfY^{-1}) = (-1)^r Z(\bfX,\bfY).$$
\end{proposition}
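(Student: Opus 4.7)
The strategy is to reduce to Theorem~\ref{theorem stanley} by introducing slack variables that linearise the piecewise-linear exponents $\min_\tau L_{\sigma\tau}(\bfn)$.

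First, for each $\sigma \in [s]$ and $\tau \in [t]$, introduce a slack variable $u_{\sigma\tau} \in \Z_{\ge 0}$ subject to the $s(t-1)$ linking equations
\[
u_{\sigma\tau} - u_{\sigma, 1} = L_{\sigma\tau}(\bfn) - L_{\sigma, 1}(\bfn), \qquad \sigma \in [s], \ \tau \in \{2, \ldots, t\},
\]
in the variables $(\bfn, \mathbf{u}) \in \mathbb{R}_{\ge 0}^{r + st}$. These equations are linearly independent (each isolates a distinct $u_{\sigma\tau}$), so they define a cone $\mathscr{C}$ of dimension $d = r + st - s(t-1) = r + s$.

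Next, writing $a^{(\rho)}_{\sigma, 1}$ for the coefficient of $n_\rho$ in $L_{\sigma, 1}$, I would apply the substitution
\[
T_\rho := X_\rho \prod_{\sigma \in [s]} Y_\sigma^{a^{(\rho)}_{\sigma, 1}}, \qquad U_{\sigma, 1} := Y_\sigma^{-1}, \qquad U_{\sigma, \tau} := 1 \text{ for } \tau \ge 2.
\]
A direct computation shows that, for fixed $\bfn \in \N_0^r$, the non-negativity constraints $u_{\sigma\tau} \ge 0$ force $u_{\sigma, 1} \ge L_{\sigma, 1}(\bfn) - \min_\tau L_{\sigma\tau}(\bfn)$, and summing the resulting geometric series in $u_{\sigma, 1}$ (with $u_{\sigma,\tau}$ for $\tau \geq 2$ determined by the linking equations) yields
\[
E_\Phi(\mathbf{T}, \mathbf{U}) = Z(\bfX, \bfY) \prod_{\sigma \in [s]} \frac{1}{1 - Y_\sigma^{-1}}, \qquad \ol{E}_\Phi(\mathbf{T}, \mathbf{U}) = Z^\circ(\bfX, \bfY) \prod_{\sigma \in [s]} \frac{1}{Y_\sigma - 1}.
\]

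Finally, apply Theorem~\ref{theorem stanley} to $\mathscr{C}$ --- whose positive lattice points are non-empty, as any $\bfn \in \N^r$ extends to a valid $\mathbf{u} \in \N^{st}$ by taking $u_{\sigma, 1}$ sufficiently large --- to obtain $\ol{E}_\Phi(\mathbf{T}^{-1}, \mathbf{U}^{-1}) = (-1)^{r + s} E_\Phi(\mathbf{T}, \mathbf{U})$. The key observation is that inverting every $T_\rho$ and $U_{\sigma\tau}$ is equivalent to the substitution $\bfX \mapsto \bfX^{-1}$, $\bfY \mapsto \bfY^{-1}$ in the original variables. Translating the reciprocity back through this substitution and using $(Y_\sigma^{-1} - 1)^{-1} = -(1 - Y_\sigma^{-1})^{-1}$, the factors $\prod_\sigma(1 - Y_\sigma^{-1})^{-1}$ cancel, and the sign $(-1)^{r+s}$ combines with the extracted $(-1)^s$ to yield $Z^\circ(\bfX^{-1}, \bfY^{-1}) = (-1)^r Z(\bfX, \bfY)$.

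The main obstacle is the sign and dimension bookkeeping: the delicate point is that $\dim \mathscr{C} = r + s$ rather than $r + st$, so that the $s$ superfluous $(-1)$'s appearing from the passage between $\ol{E}_\Phi$ and $Z^\circ$ cancel precisely against the $s$ inside the $(-1)^{r+s}$ provided by Stanley's reciprocity, leaving exactly $(-1)^r$.
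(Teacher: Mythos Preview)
Your argument is correct, and it takes a different route from the one the paper indicates. The paper does not actually prove the proposition: for $t\le 1$ it notes that $Z$ and $Z^\circ$ are already of the form $E$ and $\ol{E}$ for the \emph{empty} system of equations (after the monomial substitution $T_\rho=X_\rho\prod_\sigma Y_\sigma^{a^{(\rho)}_{\sigma,1}}$), so Theorem~\ref{theorem stanley} applies directly; for general $t$ it only says that the result ``follows from an adaptation of the proof'' of Stanley's reciprocity, i.e.\ one is meant to open up the simplicial-cone decomposition underlying Stanley's argument and rerun it with the piecewise-linear exponents $\min_\tau L_{\sigma\tau}$ in place. You instead linearise the $\min$ with slack variables and invoke Theorem~\ref{theorem stanley} as a black box. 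This buys you a self-contained reduction once Stanley's theorem is granted; the price is the small bookkeeping around the specialisation $U_{\sigma,\tau}\to 1$ for $\tau\ge 2$, which is harmless here precisely because your linking equations make those coordinates redundant (each fibre over $(\bfn,(u_{\sigma,1})_\sigma)$ is a single point, so no infinite collapse occurs and both $E_\Phi$ and $\ol{E}_\Phi$ remain regular rational functions after substitution). The paper's suggested approach avoids slack variables and specialisation altogether but requires redoing Stanley's combinatorics from scratch.
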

For $t\leq1$ this follows immediately from Theorem~\ref{theorem
stanley}, as $Z^\circ(\bfX,\bfY)$ (and $Z(\bfX,\bfY)$) may be
interpreted in terms of the generating functions $\ol{E}(\bfX)$ (and
$E(\bfX)$, respectively) associated with the empty set of equations in
$r$ variables. The general case follows from an adaptation of the
proof of~\cite[Proposition 4.16.14]{Stanley/97}.

In general, the inversion properties~\eqref{equation IP} can be proved
by making the integral~\eqref{ZI formula} `locally monomial' in the
variables~\bfy. This is achieved by applying a `principalisation of
ideals', a tool from algebraic geometry. More precisely, we apply the
following deep result to the ideal
$\prod_{(i,r,s)\in[n]^3}(\bff_{irs}(\bfy))$, defining a subvariety of
the homogeneous space $X=\GL_n/B$.

\begin{theorem}\cite[Theorem 1.0.1]{Wlodarczyk/05}\label{theorem principalisation}
  Let $\mcI$ be a sheaf of ideals on a smooth algebraic
  variety~$X$. There exists a principalisation $(Y,h)$ of $\mcI$, that
  is, a sequence
  $$
  X=X_0\stackrel{h_1}{\longleftarrow}X_1\longleftarrow\dots\stackrel{h_\iota}{\longleftarrow}X_{\iota}\longleftarrow\dots\stackrel{h_r}\longleftarrow
  X_r=Y$$ of blow-ups $h_{\iota}:X_{{\iota}}\rightarrow X_{\iota-1}$
  of smooth centres $C_{{\iota}-1}\subset X_{{\iota}-1}$ such that
\begin{itemize}
\item[a)] The exceptional divisor $E_{\iota}$ of the induced morphism
  $h^{\iota}=h_{\iota}\circ\dots\circ h_{1}:X_{\iota}\rightarrow X$
  has only simple normal crossings and $C_{\iota}$ has simple normal
  crossings with~$E_{\iota}$.
\item[b)] Setting $h:=h_r\circ\dots\circ h_1$, the total transform
  $h^{*}(\mcI)$ is the ideal of a simple normal crossing
  divisor~$\wt{E}$. If the subscheme determined by $\mcI$ has no
  components of codimension one, then $\wt{E}$ is an $\N$-linear
  combination of the irreducible components of the divisor~$E_r$.
\end{itemize}
\end{theorem}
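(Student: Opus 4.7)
The plan is to follow the canonical desingularisation strategy developed by Włodarczyk (building on Hironaka, Bierstone--Milman, Villamayor, and Encinas--Villamayor): induct on $\dim X$, and at each stage blow up a carefully chosen smooth centre determined by a local invariant attached to $\mcI$. Because the centres must patch to a global morphism over all of $X$, the invariant, and hence the choice of centre, must be \emph{canonical}, i.e.\ functorial with respect to smooth morphisms; this allows the construction to be performed locally and then glued.

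The first key step is to define the local invariant. To each closed point $x\in X$ I attach the order $\operatorname{ord}_x(\mcI)=\max\{n \mid \mcI_x\subseteq \mathfrak{m}_x^n\}$, together with a secondary refinement (the Hironaka characteristic) recording how the order drops after successive restriction to maximal-contact hypersurfaces. The maximum-order locus of $\mcI$ is closed; I would blow up its smooth components. To control what happens after blowing up, I would invoke the theory of \emph{maximal contact}: in characteristic zero, there exists locally a smooth hypersurface $H\subset X$ containing the maximum-order locus that is preserved by all subsequent admissible blow-ups. Restricting $\mcI$ to $H$, more precisely passing to the associated \emph{coefficient ideal}, one obtains a sheaf of ideals on a smooth variety of dimension $\dim X - 1$, to which the inductive hypothesis applies.

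The induction then proceeds: by the inductive hypothesis there is a principalisation of the coefficient ideal on $H$, each of whose blow-ups lifts to an admissible blow-up on $X$ that strictly decreases the local invariant of $\mcI$. After finitely many steps the order drops, and one iterates. Simultaneously I must arrange that every centre $C_\iota$ meets the accumulating exceptional divisor $E_\iota$ transversally; this normal-crossings condition can be folded into the definition of the invariant so that the inductive machinery preserves it automatically. Once the order reaches zero everywhere, $h^{\ast}(\mcI)$ is locally generated by a monomial in the defining equations of the exceptional components, yielding both (a) and (b).

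The principal obstacle is proving \emph{termination}: that the invariant strictly decreases under each admissible blow-up, and that the nested induction on dimension actually halts. This is where characteristic zero is essential, for one uses differential operators to form the derivative ideals of $\mcI$ and thereby to produce the maximal-contact hypersurface and the coefficient ideal, and to prove that the latter is well defined up to the equivalence implicit in the construction. A second, more technical obstacle is canonicity under smooth pullback, needed for the local constructions to glue to a globally defined sequence of blow-ups; this is handled by proving that every ingredient of the invariant can be expressed purely in terms of $\mcI$ and the running total transform, so that it commutes with smooth base change.
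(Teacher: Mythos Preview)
The paper does not prove this theorem at all: it is quoted verbatim as \cite[Theorem~1.0.1]{Wlodarczyk/05} and immediately followed by the remark that ``the existence of a principalisation lies as deep as Hironaka's celebrated resolution of singularities in characteristic zero. See~\cite{Wlodarczyk/05} for details.'' In other words, the paper invokes the result as a black box and defers entirely to W{\l}odarczyk's paper for the argument. There is therefore nothing in the paper to compare your proposal against.

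That said, what you have written is a reasonable high-level outline of the strategy actually employed in~\cite{Wlodarczyk/05} (and its predecessors by Bierstone--Milman, Villamayor, Encinas--Villamayor): the upper-semicontinuous invariant based on order and coefficient ideals, maximal contact in characteristic zero, induction on dimension, canonicity under smooth morphisms to allow gluing, and termination. None of the individual steps you describe is wrong in spirit. But you should be aware that the full execution of this programme is a substantial paper in its own right; your sketch names the main ingredients without carrying out any of them, and in particular the ``principal obstacle'' you identify---termination of the nested induction and well-definedness of the coefficient ideal up to the appropriate equivalence---is precisely where the bulk of the technical work in~\cite{Wlodarczyk/05} lies. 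For the purposes of the present survey paper, citing the result is the intended approach; if your aim was instead to reconstruct W{\l}odarczyk's proof, what you have is an accurate table of contents rather than a proof.
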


The existence of a principalisation lies as deep as Hironaka's
celebrated resolution of singularities in characteristic
zero~\cite{Hironaka/64}. See~\cite{Wlodarczyk/05} for details.

\subsection{A class of examples: $3$-dimensional $p$-adic Lie algebras}\label{section 3D}
Constructing an explicit principalisation for a given family of ideals
$(\bff_{irs}(\bfy))_{irs})$ is in general very difficult. In the
special case that $L_p$ is an anti-symmetric (not necessarily
nilpotent or Lie) $\Zp$-algebra of dimension~$3$, however, the
approach of Theorem~\ref{theorem voll annals} leads to an explicit,
unified expression for the zeta function
of~$L_p$. 

\begin{theorem} \label{3D theorem}\cite[Theorem 1]{KlopschVoll/07}
  Let $L$ be a $3$-dimensional $\Zp$-Lie algebra. Then there is a
  ternary quadratic form $f(\bfx)\in\Zp[x_1,x_2,x_3]$, unique up to
  equivalence, such that, for $i\geq 0$,
  \begin{equation*}%\label{zeta formula}
   \zeta_{p^iL}(s)=\zeta_{\Zp^3}(s)-Z_f(s-2)\zeta_p(2s-2)\zeta_p(s-2)p^{(2-s)(i+1)}(1-p^{-1})^{-1},
  \end{equation*}
 where $Z_f(s)$ is Igusa's local zeta function associated with $f$.
\end{theorem}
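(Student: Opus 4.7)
The plan is to apply the general framework of Theorem~\ref{theorem voll annals}, specialised to the three-dimensional case where the subalgebra conditions turn out to reduce to a \emph{single} ternary quadratic condition. First, following Sections~\ref{subsection counting} and~\ref{subsection local funeqs}, I would parameterise sublattices of $p^iL_p$ by cosets $\mcU M$ with $M\in\trthreezp$, decompose the resulting integral according to the elementary divisor type $(I,\bfr)$, and compare term by term with $\zeta_{\Zp^3}(s)$, which arises from the same parameterisation once the subalgebra condition is dropped. The correction term in the theorem should then record exactly those cosets which fail to encode subalgebras of $p^iL_p$.

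Second, for a 3-dimensional $\Zp$-Lie algebra I would work out the subalgebra condition~\eqref{subalgebra conditionII} explicitly. The matrix $\mcR(\bfy)\in\text{Mat}_3(\Z[\bfy])$ of structure constants is antisymmetric, and in dimension $3$ the combination of antisymmetry with the Jacobi identity forces all of the polynomials $\bff_{irs}(\bfy)$ arising in~\eqref{ZI formula} to be expressible in terms of a single ternary quadratic form $f(\bfx)\in\Zp[x_1,x_2,x_3]$, well-defined up to $\GL_3(\Zp)$-equivalence. Conceptually $f$ arises by combining the bracket map $\wedge^2 L\to L$ with a choice of volume form on $L$, producing a canonical quadratic invariant of $L$; uniqueness up to equivalence follows because a change of basis for $L$ transforms $f$ by the corresponding $\GL_3(\Zp)$-substitution.

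Third, I would assemble the $p$-adic integral. After the elementary divisor decomposition, the integration over the auxiliary $p$-adic variables factors as a product of a monomial piece (contributing the combinatorial factors $\zeta_p(2s-2)\zeta_p(s-2)$ together with the normalisation $(1-p^{-1})^{-1}$, analogously to Example~\ref{example igusa abelian II}) and an Igusa-type integral over the entries governed by the quadratic condition, which by construction equals $Z_f(s-2)$; the shift $s\mapsto s-2$ encodes the elementary divisor weighting in rank~$3$. Passing from $L$ to $p^iL$ merely rescales the ambient lattice by $p^i$, producing the geometric factor $p^{(2-s)(i+1)}$. Subtracting this contribution from the unconstrained baseline $\zeta_{\Zp^3}(s)$ yields the claimed identity.

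The main obstacle is the reduction in the second step: showing that the subalgebra congruences collapse to a single inequality of the form $v_p(f(\bfx))\geq N(\bfr)$ for an appropriate linear function $N$ of the elementary divisor exponents. This is a genuine dimension-$3$ phenomenon -- for $n\geq 4$ the polynomials $\bff_{irs}$ in~\eqref{ZI formula} are no longer all expressible via a single quadratic, and no single Igusa integral suffices -- and it requires careful analysis of the ideal generated by the $\bff_{irs}(\bfy)$ to identify its essential generator with a power of $f$. Once this reduction is achieved, the remaining computations reduce to standard $p$-adic integrations over $\Zp^3$ and over the flag variety $\GL_3/B$.
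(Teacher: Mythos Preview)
Your strategy is essentially the paper's approach in Section~\ref{section 3D}. Two refinements are worth noting. First, the Jacobi identity plays no role: the paper emphasises that the result holds for any antisymmetric $3$-dimensional $\Zp$-algebra, Lie or not, so your invocation of Jacobi is superfluous. Second, the reduction you flag as the main obstacle is sharper than ``all $\bff_{irs}$ are expressible via a single quadratic $f$''. The paper observes that the subalgebra condition~\eqref{subalgebra conditionIII} is \emph{vacuous} whenever $1\notin I$, and for $1\in I$ collapses to the single congruence $p^{r_0}(\mcR_{(1)}(\alpha))_{23}\equiv 0\pmod{p^{r_1}}$. A direct calculation then gives $\det(\alpha)\cdot(\mcR_{(1)}(\alpha))_{23}=f(\alpha[1])$ with
\[
f(\bfx):=\tuL_{23}(\bfx)x_1-\tuL_{13}(\bfx)x_2+\tuL_{12}(\bfx)x_3,
\]
so the condition becomes $r_0\geq r_1-v_p(f(\alpha[1]))$, and the remaining integral over the first column of $\alpha$ is precisely the Igusa integral~$Z_f$. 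This is the concrete content of the step you left as an obstacle.
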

The form $f(\bfx)$ in Theorem~\ref{3D theorem} may be defined
explicitly in terms of the structure constants of $L_p$ with respect
to a chosen basis; different bases give rise to equivalent forms
(see~\cite{KlopschVoll/07} for details).

This result yields, in particular, a uniform expression for the zeta
functions of all $3$-di\-men\-sio\-nal $\Zp$-(Lie) algebras we have
seen so far (and others, e.g.~\cite{Klopsch/03}). For example, the
forms $f(\bfx)$ for the abelian algebra $\Zp^3$, the Heisenberg Lie
algebra and the `simple' Lie algebra $\sltwo(\Zp)$ are $0$, $x_3^2$ and
$x_3^2-4x_1x_2$, respectively.

Using the setup of Section~\ref{subsection local funeqs}, the key to
proving Theorem~\ref{3D theorem} is the observation that only the
functions $W_I(p^{-s})$ with $1\in I$ differ from the `abelian'
functions~\eqref{WI abelian}. Indeed, if $1\not\in I$, the
conditions~\eqref{subalgebra conditionIII} hold for all
$r_0\in\N_0$. If $1\in I$ then they hold if and only if
\begin{equation}\label{3D subalgebra}
  p^{r_0}(\mcR_{(1)}(\alpha))_{23}\equiv0\mod (p^{r_1})
\end{equation}
and a quick calculation shows that, for $\alpha=(\alpha_{ij})\in\G_3$,
$$
\det(\alpha)(\mcR_{(1)}(\alpha))_{23}=\tuL_{23}(\alpha[1])\alpha_{11}-\tuL_{13}(\alpha[1])\alpha_{21}+\tuL_{12}(\alpha[1])\alpha_{31}.$$
Setting
\begin{equation*}%\label{form}
f(\bfx):=\tuL_{23}(\bfx)x_1-\tuL_{13}(\bfx)x_2+\tuL_{12}(\bfx)x_3
\end{equation*}
we see that~\eqref{3D subalgebra} holds if and only if 
$$r_0\geq r_1-v_p(f(\alpha[1])).$$ The computation of the
integral~\eqref{ZI formula} is thus no harder than the computation of
the Igusa zeta function associated with the quadratic polynomial
$f(\bfx)$.

We note that Theorem~\ref{3D theorem} also yields a complete
description of the possible poles of zeta functions of $3$-dimensional
$\Zp$-Lie algebras, as the poles of Igusa's local zeta function of
quadratic forms are well understood
(cf.~\cite[Corollary~1.2]{KlopschVoll/07}). In higher dimensions, such
a description is entirely elusive.  Also, Theorem~\ref{3D theorem}
shows explicitly the relationship between $\zeta_L(s)$ and
$\zeta_{pL}(s)$ if $L$ is of dimension~$3$. No such formula is known
in higher dimensions.

\subsection{Global zeta functions of groups and rings}\label{section global}

Let $G$ be a group with polynomial subgroup growth. As noted in the
introduction, the degree of polynomial subgroup growth of~$G$ is
encoded in an analytic invariant of the group's zeta function, namely
its abscissa of convergence~$\alpha$ (cf.~\eqref{alpha
definition}). The following is a deep result.

\begin{theorem}\cite[Theorem 1.1]{duSG/00} \label{theorem duSG analytic} 
Let $G$ be a $\T$-group.
\begin{enumerate}
\item The abscissa
  of convergence $\alpha$ of its zeta function $\zeta_G(s)$ is a
  rational number, and $\zeta_G(s)$ can be meromorphically continued
  to $\mathfrak{Re}(s)>\alpha-\delta$ for some $\delta>0$. The
  continued function is holomorphic on the line
  $\mathfrak{Re}(s)=\alpha$ except for a pole at $s=\alpha$.
\item Let $b+1$ denote the multiplicity of the pole of $\zeta_G(s)$ at
$s=\alpha$. There exists a real number $c\in\mathbb{R}$ such that
$$\sum_{i\leq m}a_i \sim c \cdot m^\alpha (\log m)^b\quad\text{as $m\rightarrow\infty$}.$$ 
\end{enumerate}
\end{theorem}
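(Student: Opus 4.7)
The plan is to reduce to the ring setting and then analyse the Euler product of local factors via the description supplied by Theorem~\ref{theorem duSG}. By the linearisation results of Section~\ref{section linearisation}, there is a nilpotent Lie ring $L=L(G)$ such that $\zeta_{G,p}(s)=\zeta_{L,p}(s)$ for all but finitely many primes~$p$. The finitely many exceptional local factors are rational functions in $p^{-s}$ by Theorem~\ref{theorem rationality} and therefore contribute only a holomorphic, non-vanishing factor in any right half plane we care about; in particular they do not affect the abscissa of convergence, the location or multiplicity of the rightmost pole, nor the existence of a meromorphic continuation slightly to its left. Thus I may assume from the outset that $\zeta_G(s)=\prod_p\zeta_{L,p}(s)$ with the local factors given, for almost all $p$, by the formula
\begin{equation*}
\zeta_{L,p}(s)=\sum_{t=1}^m c_t(p)\,W_t(p,p^{-s})
\end{equation*}
of Theorem~\ref{theorem duSG}, where $c_t(p)=|\overline{V_t}(\Fp)|$.

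Next I would expand each $W_t(p,p^{-s})$ as a power series in $p^{-s}$ with coefficients that are rational functions in $p$, and group the resulting monomials $p^{a-bs}$ by their exponent pair $(a,b)$. This exhibits each local factor as a $\Q$-linear combination, with coefficients polynomial in $p$ and $c_1(p),\dots,c_m(p)$, of geometric-series terms $(1-p^{a-bs})^{-1}$. The Euler product therefore admits a decomposition in which the ``leading'' Dirichlet factors $\prod_p(1-p^{a-bs})^{-1}$ are translates of the Riemann zeta function, while the contributions of $c_t(p)$ can be encoded by Hasse--Weil L-functions of the varieties $V_t$. Extracting the finitely many poles coming from these $\zeta$- and $L$-factors leaves an Euler product which, by a quantitative estimate on the coefficients (comparing the trivial bound $c_t(p)=O(p^{\dim V_t})$ with the Lang--Weil estimate $c_t(p)=p^{\dim V_t}+O(p^{\dim V_t -1/2})$), converges absolutely in a half plane strictly larger than $\mathfrak{Re}(s)>\alpha$. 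This is the heart of the argument and the main obstacle: one must show that after factoring out an \emph{explicit} finite product of translated Riemann and Hasse--Weil L-functions, the remaining Euler product has abscissa of convergence strictly less than~$\alpha$. This is achieved in du Sautoy--Grunewald by a careful analysis of the cone integrals underlying Theorem~\ref{theorem duSG}, which allows one to read off the set of candidate exponent pairs $(a,b)$ and show that only finitely many of them can produce poles on the line $\mathfrak{Re}(s)=\alpha$.

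Granted this factorisation, rationality of $\alpha$ follows because $\alpha$ equals $\max\{a/b\}$ over the finite set of exponent pairs contributing a pole, hence $\alpha\in\Q$. Meromorphic continuation to $\mathfrak{Re}(s)>\alpha-\delta$ follows from the known meromorphic continuation of the Riemann zeta function and, for Hasse--Weil L-functions of the smooth varieties $\overline{V_t}$, from the Weil conjectures combined with a mild analytic continuation (it is enough to continue slightly past the edge of absolute convergence, which can be done by standard techniques). Holomorphy on the line $\mathfrak{Re}(s)=\alpha$ away from $s=\alpha$ comes from the fact that the only translated Riemann zeta factor producing a pole at $s=\alpha$ is the one with $a/b=\alpha$, and the corresponding Euler factors have no zeros on the critical line except possibly at $s=\alpha$ itself; Hasse--Weil factors do not contribute poles here by the Weil bounds.

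Finally, for the asymptotic statement I would apply a Tauberian theorem of Ikehara--Delange type (such as the version used in \cite{duSG/00}, which allows for meromorphic continuation rather than mere holomorphicity to the right of the pole). Part (1) provides exactly the analytic input required: a Dirichlet series with non-negative coefficients, abscissa $\alpha$, meromorphic continuation past $\mathfrak{Re}(s)=\alpha$, and a single pole on that line at $s=\alpha$ of order $b+1$. The Tauberian theorem then yields the partial-sum asymptotic $\sum_{i\leq m}a_i\sim c\,m^\alpha(\log m)^b$, with $c$ proportional to the leading Laurent coefficient of $\zeta_G(s)$ at $s=\alpha$, completing the proof.
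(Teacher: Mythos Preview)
The paper does not itself prove this theorem: it merely records it as \cite[Theorem~1.1]{duSG/00} and remarks that ``the proof\dots\ proceeds via an analysis of the (local) `cone integrals' mentioned above.'' Your sketch is broadly consistent with this---you reduce to rings, invoke the structural description of the local factors (Theorem~\ref{theorem duSG}), and then defer the key step (extracting the dominant Euler factors and controlling the remainder) to the cone-integral analysis of \cite{duSG/00}, before applying a Delange-type Tauberian theorem. At that level of resolution your outline and the paper's one-line summary agree.

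There is, however, a genuine soft spot in your write-up. You propose to encode the contributions of the point-counts $c_t(p)$ via Hasse--Weil $L$-functions of the varieties $V_t$, and then appeal to ``the Weil conjectures combined with a mild analytic continuation'' of these $L$-functions past their abscissa of convergence. Analytic (let alone meromorphic) continuation of Hasse--Weil $L$-functions of general smooth varieties---even by a thin strip---is not available by ``standard techniques''; for arbitrary $V_t$ it is a deep open problem. The actual argument in \cite{duSG/00} avoids this entirely: the cone-integral description shows that the dominant contribution to each local factor is governed by finitely many shifted Riemann zeta factors $\prod(1-p^{a_i-b_is})^{-1}$, and the Lang--Weil (or Deligne) estimate $c_t(p)=p^{\dim V_t}+O(p^{\dim V_t-1/2})$ is used only to show that the \emph{remaining} Euler product converges absolutely in a half-plane $\mathfrak{Re}(s)>\alpha-\delta$. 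No continuation of any Hasse--Weil $L$-function is needed; the only global input is the meromorphic continuation of $\zeta(s)$. You already mention Lang--Weil in the right place, so the fix is simply to drop the Hasse--Weil detour and let the Weil bounds do all the work of pushing the error term into the absolutely convergent tail.
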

The proof of Theorem~\ref{theorem duSG analytic} given
in~\cite{duSG/00} proceeds via an analysis of the (local) `cone
integrals' mentioned above.

Whilst it is a remarkable fact that global zeta functions of nilpotent
groups always allow for some analytic continuation beyond their
abscissa of convergence, it is not the case that they may all be
continued to the whole complex plane, as is the case for abelian
groups or the Heisenberg group. In fact, numerous groups have been
found for which there are natural boundaries for analytic continuation
(cf.~\cite[Chapter 7]{duSautoyWoodward/08}). Surprisingly little is
known about the abscissa of convergence $\alpha$ and the pole order
$b+1$ in general.

\section{Variations on a theme}\label{section variations}
The theme of counting subobjects of finite index in a nilpotent group
or a ring may be varied in several interesting ways. 

\subsection{Normal subgroups and ideals}\label{subsection normal subgroups} 
One of the forerunners of the very concept of the zeta function of a
group is the Dedekind zeta function of a number field, one of the most
classical objects in algebraic number theory. Given a number field~$k$, with ring of integers $\mathcal{O}$, the Dedekind zeta function
of $k$ is defined as the Dirichlet series
 $$\zeta_k(s):=\sum_{\mathfrak{a}\nl_f\mathcal{O}}
 |\mathcal{O}:\mathfrak{a}|^{-s},$$ where the sum ranges over the
 ideals of finite index in $\mathcal{O}$.  Owing to our understanding
 of the ideal structure in the Dedekind ring $\mathcal{O}$ we have a
 very good control of arithmetic and analytic properties of this
 important function. In particular, we know that it allows for an
 analytic continuation to the whole complex plane and has a simple
 pole at $s=1$. Its residue at this pole encodes important arithmetic
 information about the number field~$k$, given by the class number
 formula.

 Given a general ring $L$, its ideal zeta function is defined as the
 Dirichlet series
 $$\zeta^\nl_{L}(s):=\sum_{m=1}^\infty
 b^\nl_mm^{-s}=\sum_{H\triangleleft_f L}|L:H|^{-s},$$ where
 $b^\nl_m=b^\nl_m(L)$ is the number of ideals in $L$ of index~$m$.
 Similarly, the normal zeta function of a nilpotent group $G$ is
 defined as
 $$\zeta^\nl_{G}(s):=\sum_{m=1}^\infty
a^\nl_mm^{-s}=\sum_{H\triangleleft_f G}|G:H|^{-s},$$ where
$a^\nl_m=a^\nl_m(G)$ denotes the number of normal subgroups of $G$ of
index~$m$.

Both the ideal zeta function of a ring $L$ and the normal zeta
function of a nilpotent group $G$ satisfy an Euler product
decomposition
 $$\zeta^\nl_{L}(s)=\prod_{p \text{ prime}}\zeta^\nl_{L,p}(s),\quad\quad\quad
 \zeta^\nl_{G}(s)=\prod_{p \text{ prime}}\zeta^\nl_{G,p}(s)$$ into
 local factors enumerating subobjects of $p$-power index. Fortunately,
 also the study of normal subgroup growth can be linearised using the
 Lie ring introduced in Section~\ref{section
 linearisation}. By~\cite[Section 4]{GSS/88} we have, for almost all
 primes $p$,
 \begin{equation}\label{equation malcev normal subgroups}
  \zeta^\nl_{G,p}(s)=\zeta^\nl_{L(G),p}(s)
 \end{equation}
where $L(G)$ is the nilpotent Lie ring associated with $G$
(cf.~Section~\ref{section linearisation}).

 \begin{example}\label{example heisenberg normal}
 Let $G$ be the discrete Heisenberg group from Example \ref{example
   heisenberg}. It can be shown that
 $$\zeta^\nl_{G}(s)=\zeta^\nl_L(s)=\zeta(s)\zeta(s-1)\zeta(3s-2).$$
 \end{example}
 \noindent Note again that the equation~\eqref{equation malcev normal
 subgroups} holds for all primes~$p$.

In many ways, the theory of ideal zeta functions of nilpotent groups
is similar to the theory of their (subgroup) zeta functions. In
particular, their local factors are also rational in $p^{-s}$, and
analogues of Theorems~\ref{theorem duS denominators}, \ref{theorem
  duSG} and \ref{theorem duSG analytic} hold. The first explicitly
computed example of a non-uniform zeta function is the normal zeta
function of a class-$2$-nilpotent group.

\begin{example}\label{example nonuniform}
In ~\cite{duS-ecI/01} du Sautoy showed that both the subgroup and the
normal subgroup zeta function of the following class-$2$-nilpotent
group are not finitely uniform. He defined
\begin{equation*}
G := \langle x_1,\dots,x_6,y_1,y_2,y_3|\;\forall i,j: [x_i,x_j]=\mcR(\bfy)_{ij},\;\text{all other $[\,,]$ trivial}\rangle,
\end{equation*}
where
$$\mcR(\bfy)=\left(\begin{array}{cc}0&R(\bfy)\\-R(\bfy)^t&0\end{array}\right)
\quad\text{ with }\quad
R(\bfy)=\left(\begin{array}{ccc}y_3&y_1&y_2\\y_1&y_3&0\\y_2&0&y_1\end{array}\right).$$
Notice that the polynomial $\det(R(\bfy))=y_1y_3^2-y_1^3-y_2^2y_3$
defines the projective elliptic curve $E$ considered in
Example~\ref{example ec projective}. It can be shown
(cf.~\cite[p.~1031]{Voll/04}) that, for $p\not=2$,
$$\zeta^\tl_{G,p}(s)=\zeta_{\Zp^6}(s)(W_1(p,p^{-s}) + b(p) W_2(p,p^{-s})),$$
where $b(p)$ is the number defined in Example~\ref{example ec projective} and
\begin{align*}
W_1(X,Y)&=\frac{1+X^6Y^7+X^7Y^7+X^{12}Y^8+X^{13}Y^8+X^{19}Y^{15}}{(1-X^{18}Y^9)(1-X^{14}Y^8)(1-X^8Y^7)}\\
W_2(X,Y)&=\frac{(1-Y^2)X^6Y^5(1+X^{13}Y^8)}{(1-X^{18}Y^9)(1-X^{14}Y^8)(1-X^8Y^7)(1-X^7Y^5)}.
\end{align*}
Using the identity $b(p)|_{p\rightarrow p^{-1}}=p^{-1}b(p)$
established earlier, the functional equation
\begin{equation}\label{normal funeq}
\zeta^\triangleleft_{G,p}(s)|_{p\rightarrow
p^{-1}}=-p^{36-15s}\zeta^\triangleleft_{G,p}(s)
\end{equation} 
follows immediately. The local subgroup zeta functions
$\zeta_{G,p}(s)$ have not been calculated explicitly.
\end{example}

The methods used to perform the calculations in Example~\ref{example
nonuniform} rely on the fact that the (square root of the) determinant
of the matrix of relations $\mcR(\bfy)$ defines a smooth hypersurface
in the projective space over the centre of the group.  Together with
the algebro-geometric fact that every smooth plane curve defined over
$\Q$ may be defined by the determinant of a suitable matrix of linear
forms, one can, in this way, force any such curve to take on the role
played by the elliptic curve in Example~\ref{example nonuniform} in
the normal zeta function of a class-$2$-nilpotent group. We refer
to~\cite{Voll/05} for details.

Equation~\eqref{normal funeq} is a special case of an analogue of
Theorem~\ref{theorem voll annals} for normal zeta functions of
class-$2$-nilpotent Lie rings (\cite[Theorem~C]{Voll/06a}). To what
extent this symmetry phenomenon extends to normal zeta functions of
other (Lie) rings is largely mysterious. Examples due to
Woodward~(\cite{duSautoyWoodward/08}) show that this may or may not
hold in Lie rings of higher nilpotency classes, and in certain soluble
Lie rings.

\subsection{Representations} \label{subsection reps}
\bigskip Another variant of the theme of counting subgroups in a group
consists in enumerating the group's finite-dimensional irreducible
complex representations. Again, the concept of a zeta function is
helpful to study these if the group has -- at least up to some natural
equivalence relation -- only finitely many irreducible complex
representations of each finite dimension, and if these numbers grow at
most polynomially. We call an (abstract or profinite) group $G$
\emph{rigid} if, for every $n\in\N$, the number $r_n(G)$ of
(continuous, if $G$ is profinite,) irreducible complex representations
of $G$ of dimension $n$ is finite. We say that a rigid group $G$ has
\emph{polynomial representation growth (PRG)} if, for each $m\in\N$,
the number of representations of $G$ of dimension at most $m$ is
bounded above by a polynomial in~$m$, i.e.~ $\sum_{i\leq
  m}r_i(G)=O(1+m^\alpha)$ for some $\alpha\in\mathbb{R}$. As in the
case of counting subgroups, we define a Dirichlet generating function
$$\zeta^{{\rm irr}}_G(s):=\sum_{n=1}^\infty
r_n(G)n^{-s}=\sum_{\rho}(\dim(\rho))^{-s},$$ where $\rho$ ranges over
the finite-dimensional irreducible complex representations of $G$,
called the representation zeta function of $G$. It defines a
convergent function on the complex half-plane determined by the
infimum of these~$\alpha$.

No general characterisation of rigid or PRG groups is known. In the
current section we will concentrate on results regarding three classes
of groups: finitely generated torsion-free nilpotent (or $\T$-)groups,
arithmetic groups and compact $p$-adic analytic groups.

As we will see in Section~\ref{subsubsection reps T-groups},
$\T$-groups are `rigid up to twisting with one-d\-imen\-sio\-nal
representations'. The growth of the numbers of the ensuing equivalence
classes, called `twist-isoclasses', is polynomial, and the associated
representation zeta functions satisfy Euler product decompositions,
indexed by the primes, analogous to the context of counting
subgroups. The Kirillov orbit method offers a suitable `linearisation'
of the problem of counting twist-isoclasses of representations of
$p$-power dimension, and we may once again use our arsenal of tools
from $p$-adic integration to study the Euler factors (at least for
almost all primes).

It is known that arithmetic groups are PRG if and only if they satisfy
the Congruence Subgroup Property (CSP). In Section~\ref{subsubsection
  reps p-adic analytic groups} we review results that show that the
representation zeta functions of these groups, too, satisfy an Euler
product decomposition, indexed by all places of the underlying number
field (including the archimedean ones). The non-archimedean factors
are zeta functions associated with compact $p$-adic analytic
groups. As we shall see, these are also rational functions, albeit not
solely in the parameter $p^{-s}$.

\subsubsection{$\T$-groups} \label{subsubsection reps T-groups}
A $\T$-group has infinitely many one-dimensional irreducible
representations: it has infinite abelianisation, and the group of
one-di\-men\-sio\-nal representations of $\Z^n$, i.e.~of homomorphisms of
$\Z^n$ to $\C^*$, is isomorphic to $(\C^*)^n$. Tensoring with
one-dimensional representations will thus give us an infinitude of
$m$-dimensional representations for every $m$ for which such
representations exist. Fortunately, this is all that needs
fixing. More precisely, given a $\T$-group~$G$, we denote by $R_n(G)$
the set of $n$-dimensional irreducible complex representations
of~$G$. Given $\sigma_1,\sigma_2\in R_n(G)$, we say that $\sigma_1$
and $\sigma_2$ are {\sl twist--equivalent} if there exists a
one-dimensional representation~$\chi\in R_1(G)$ such
that~$\sigma_1=\chi\otimes\sigma_2$. The classes of this equivalence
relation are called twist-isoclasses. The set $R_n(G)$ has the
structure of a quasi--affine complex algebraic variety whose geometry
was analysed by Lubotzky and Magid. They proved in \cite[Theorem
6.6]{LubotzkyMagid/85} that, for every~$m\in\N$, there is a finite
quotient $G(m)$ of $G$ such that every $m$-dimensional irreducible
representation of~$G$ is twist-equivalent to one that factors
through~$G(m)$. In particular, the number $c_m=c_m(G)$ of
twist-isoclasses of irreducible $m$-dimensional representations is
finite. The representation zeta function of $G$ is defined
(cf.~\cite{HrushovskiMartin/07}) by
\begin{equation}\label{definition nilpotent rep zeta function}
\zirr_{G}(s):=\sum_{m=1}^\infty c_mm^{-s}.
\end{equation} Furthermore, the
function~$m\mapsto c_m$ is multiplicative. Indeed, this follows from
Lubotzky and Magid's result together with the group-theoretic fact
that the finite nilpotent groups $G(m)$ are the direct products of
their Sylow $p$-subgroups and the representation-theoretic fact
(\cite[(10.33)]{CurtisReiner-methods/81}) that the irreducible
representations of direct products of finite groups are exactly the
tensor products of irreducible representations of their factors. Thus
$$\zirr_{G}(s)=\prod_{p \text{ prime}}\zirr_{G,p}(s), \quad \text{
  where }\quad\zirr_{G,p}(s):=\sum_{i=0}^\infty c_{p^i}p^{-is}.$$

As in the case of saturable pro-$p$-groups (see Section~10.2 in
Klopsch's lecture notes), there is a close connection between
representations of $\T$-groups and co-adjoint orbits. This
generalisation of Kirillov's orbit method to the discrete setting of
$\T$-groups is due to Howe. In \cite{Howe-nilpotent/77} he shows that
(twist-isoclasses of) irreducible representations in a $\T$-group $G$
are parametrised by co-adjoint orbits of certain (additive) characters
on the associated Lie ring $L(G)$. More precisely, we write
$\widehat{L}$ for the group $\text{Hom}(L,\C^*)$, and $\text{Ad}^*$
for the co-adjoint action of $G$ on $\widehat{L}$. Denote by $L'$ the
Lie subring of $L$ corresponding to the group's derived group~$G'$.
We say that a character $\psi\in\widehat{L}$ is \emph{rational} on
$L'$ if its restriction to $L'$ is a torsion element, i.e.~if
$\psi(nL')\equiv1$ for some $n\in\N$. The smallest such $n$ is called
the \emph{period} of~$\psi$.  Howe's principal result now states that
a character's co-adjoint orbit is finite if and only if the character
is rational on $L'$, and that finite $\text{Ad}^*$-orbits in
$\widehat{L}$, $\Omega$ say, of characters of odd period, are in
$1-1$-correspondence with (twist-isoclasses of) finite-dimensional
representations $U_\Omega$ of $G$ of dimension $|\Omega|^{1/2}$
(see~\cite[Section~3.4]{Voll/06a} for details).

To effectively enumerate twist-isoclasses of finite-dimensional
representations of $G$ we thus have to deal with two problems: given a
character $\psi\in\widehat{L}$ of finite period, we firstly need to
determine the size of its co-adjoint orbit. Secondly, to control
over-counting, we have to determine the size of the co-adjoint orbit of
the restriction of~$\psi$ to $L'$. From now on, we will restrict
ourselves to the case that the nilpotency class of $G$ is~$2$. In this
case, the latter task is trivial, as the co-adjoint action on the
restriction of characters to $L'$, which is central, is trivial.

As in the case of saturable pro-$p$ groups, we associate with a
character $\psi\in\widehat{L}$ the bi-additive antisymmetric map
$$b_\psi: L\times L \rightarrow \C^*, \quad(x,y)\mapsto\psi([x,y]).$$
Note that $b_\psi$ only depends on the restriction of $\psi$
to~$L'$. We define
$$\text{Rad}_\psi:=\text{Rad}(b_\psi)=\{x\in L|\;\forall y\in
L:b_\psi(x,y)=1\}.$$ One can show that, if $\psi$ is rational on $L'$
(so its co-adjoint orbit is finite by Howe's result) and
$|L:\text{Rad}|$ is coprime to finitely many `bad primes', depending
only on $G$, then $\text{Rad}_\psi$ is the Lie ring corresponding to
the stabiliser subgroup $\text{Stab}_G(\psi)$ of $\psi$ under the
co-adjoint action. Then, by the Orbit Stabiliser Theorem, the index
$|L:\text{Rad}_\psi|$ equals the size of the co-adjoint orbit
of~$\psi$. The Kirillov correspondence now implies that the
representation associated with the orbit of $\psi$ has degree
$|L:\text{Rad}_\psi|^{-1/2}$.

Recall that, for a class-$2$-nilpotent group, finite co-adjoint orbits
are pa\-ra\-me\-tri\-zed by rational characters on $L'$ of finite
period. For a prime $p$ and $N\in\N_0$, we write $\Psi_N$ for the set
of $\psi\in\widehat{L'}$ of period~$p^N$. By Howe's results we have

\begin{theorem}\cite[Corollary 3.1]{Voll/06a}\label{repgrowth class 2 nilpotent} Let $G$ be a class-$2$-nilpotent $\T$-group. Then, for almost all
primes~$p$,
\begin{equation}\label{formula reps class 2}\zeta^{\rm{irr}}_{G,p}(s)=\sum_{N\in\N_0,\;\psi\in\Psi_N}|L:\rm{Rad}_\psi|^{-s/2}.
\end{equation}
\end{theorem}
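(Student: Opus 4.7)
The plan is to decompose the local representation zeta function as a sum over coadjoint orbits in $\widehat{L}$ via Howe's correspondence, and then exploit the class-$2$ hypothesis to collapse this sum to one over characters of $L'$ alone.

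First, I would record the Euler factorisation already stated in the text, so that the local factor at $p$ enumerates twist-isoclasses of irreducible representations of $p$-power dimension with weight $(\dim\rho)^{-s}$:
$$\zeta^{\rm irr}_{G,p}(s)=\sum_{[\rho]}(\dim\rho)^{-s},$$
the sum running over twist-isoclasses of irreducible representations of $p$-power dimension. By Howe's correspondence, each such twist-isoclass is represented by an $\text{Ad}^*$-orbit $\Omega\subset\widehat{L}$ of a character $\psi\in\widehat L$ rational on $L'$, with $\dim U_\Omega=|\Omega|^{1/2}$.

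Next, I would compute these coadjoint orbits explicitly using the class-$2$ hypothesis. Since $[L,L']=0$, the Campbell--Baker--Hausdorff formula collapses to $\text{Ad}(\exp y)x=x+[y,x]$, whence
$$(\text{Ad}^*(\exp y)\psi)(x)=\psi(x)\cdot b_\psi(x,y),$$
so the orbit of $\psi$ is the coset $\psi\cdot\{b_\psi(\cdot,y):y\in L\}$. The assignment $y\mapsto b_\psi(\cdot,y)$ factors through $L/\text{Rad}_\psi$, and each $b_\psi(\cdot,y)$ is trivial on $L'$ (because $[L,L']=0$), hence lies in $\widehat{L/L'}$. Away from the finitely many ``bad primes'' listed in the text, $|\Omega|=|L:\text{Rad}_\psi|$ by the Orbit--Stabiliser Theorem, and this index depends only on the restriction $\psi|_{L'}$.

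The third step is to match twist-isoclasses with characters of $L'$. Every element of a given coadjoint orbit has the same restriction to $L'$, so $\Omega\mapsto\psi|_{L'}$ is well-defined. Conversely, any two lifts of a given $\bar\psi\in\widehat{L'}$ to $\widehat L$ differ by a character of $L/L'$, which corresponds to tensoring by a $1$-dimensional representation of $G$. Hence twist-isoclasses of irreducible representations of $p$-power dimension are in bijection with the set $\bigsqcup_{N\in\N_0}\Psi_N$ of $p$-power-period characters of $L'$, and the associated dimension equals $|L:\text{Rad}_\psi|^{1/2}$. Substituting this into the sum above yields \eqref{formula reps class 2}.

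The main obstacle is the careful book-keeping of the three equivalences being juggled simultaneously: (i) verifying that the restriction-to-$L'$ map on orbits is a \emph{bijection} between twist-isoclasses and rational characters of $L'$ (not merely a surjection), (ii) ensuring that the stated exclusion of bad primes is exactly what is needed so that $\text{Rad}_\psi$ genuinely computes the $\text{Ad}^*$-stabiliser of $\psi$ and so that $|L:\text{Rad}_\psi|$ is the correct orbit size, and (iii) reconciling the ``$p$-power dimension'' condition on $\rho$ with the ``$p$-power period'' condition on $\psi|_{L'}$, which uses multiplicativity of $m\mapsto c_m$ and the Chinese Remainder Theorem in the spirit of the subgroup-counting Euler factorisation.
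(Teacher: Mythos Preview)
Your proposal is correct and follows essentially the same route as the paper: the paper's argument is the discussion immediately preceding the theorem, which invokes Howe's correspondence, notes that in class~$2$ the co-adjoint action is trivial on restrictions to $L'$ (so orbits are parametrised by rational characters of $L'$), and uses the Orbit--Stabiliser identification $|\Omega|=|L:\mathrm{Rad}_\psi|$ away from finitely many bad primes. Your explicit computation of $\mathrm{Ad}^*(\exp y)\psi=\psi\cdot b_\psi(\cdot,y)$ via Baker--Campbell--Hausdorff is a welcome concreteness not spelled out in the paper, but it is the same underlying argument.
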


Assume that ${\rm rk}(G/G')=d$ and ${\rm rk}(G')=d'$, say, and let $p$
be a prime for which~\eqref{formula reps class 2} holds. To compute
the right hand side of this equation effectively, we identify $\Psi_N$
with $W_{p,N}:=(\Z/(p^N))^{d'}\setminus p(\Z/(p^N))^{d'}$ as additive
groups, and let $\mcR(\bfy)\in{\rm Mat}(d,\Z[y_1,\dots,y_{d'}])$ be
the matrix of linear forms encoding the commutator structure of $G$,
i.e.~$\mcR(\bfy)_{ij}=\sum_{k=1}^{d'}\lambda_{ij}^ky_k$ if $G$ is
generated by $e_1,\dots,e_d$ subject to the relations
$[e_i,e_j]=\sum_{k=1}^{d'}\lambda_{ij}^kf_k$, say, where $G/G'=\langle
e_1G',\dots,e_dG'\rangle$ and $G'=\langle f_1,\dots,f_{d'}\rangle$.

A simple computation shows that if $\psi\in\Psi_N$ corresponds to
$\bfl\in W_{p,N}$, then the index of $\rm{Rad}_\psi$ in $L$ equals the
index of the system of linear congruences
\begin{equation}\label{linear congruences reps}
\mcR(\bfl)\bfx\equiv0\mod(p^N)
\end{equation}
where $\bfx\in\Zp^d$, say. This index can be easily computed from the
elementary divisors of the matrix $\mcR(\bfl)$.  Recall that
$\mcR(\bfl)$ is said to have elementary divisor type
$\bfm=(m_1,\dots,m_d)\in[N]_0^d$ -- written $\nu(\mcR(\bfl))=\bfm$ --
if there are matrices $\beta,\gamma\in\text{GL}_d(\Z/p^N)$ such that
$$\beta\mcR(\bfl)\gamma\equiv\left(\begin{array}{ccc}p^{m_1}&&\\&\ddots&\\&&p^{m_d}\end{array}\right)$$
and $m_1\leq\dots\leq m_d$.  Given $N\in\N_0$ and $\bfm\in\N_0^d$ we
set
$$\mcN_{N,\bfm}:=\left|\left\{\bfl\in
  W_{p,N}|\,\nu(\mcR(\bfl))=\bfm\right\}\right|.$$ It is now easy to
  see that
\begin{equation}\label{formula reps poincare}
\zeta^{\rm{irr}}_{G,p}(s)=\sum_{N\in\N_0,\;\bfm\in\N_0^d}\mcN_{N,\bfm}p^{-\sum_{i\in[d]}(N-m_i)s/2}.
\end{equation}
%and define the Poincar\'e series
%$$P_{\mcR}(\bfr):=P_{\mcR}(r_1,\dots,r_f):=\sum_{N\in\N_0,\;\bfm\in\N_0^f}\mcN_{N,\bfm}p^{-\sum_{i\in[f]}(N-m_i)r_i}.$$
This `Poincar\'e series' may, in analogy to equation~\eqref{equation
  igusa}, be expressed in terms of a $p$-adic integral. The integrand
of this (in general quite complicated) integral is defined in terms of
the minors of the matrix $\mcR(\bfy)$. This approach yields
immediately the rationality of (almost all of) the local
representation zeta functions of $\T$-groups, which was first
established in~\cite{HrushovskiMartin/07} by model-theoretic means
(and for all primes~$p$). The general case (of $\T$-groups of
arbitrary nilpotency class) is complicated by having to account for
over-counting when we run over the characters of $L'$. This can also
be formulated in terms of elementary divisors of matrices of
forms. See~\cite[Section 2.2]{Voll/06a} for details. We illustrate the
computations outlined above with a familiar example.

\begin{example} Let $G$ be the discrete Heisenberg group from Example \ref{example
heisenberg}. Here $d=2$ and $d'=1$. For all primes $p$ and $N\in\N_0$
  we have $W_{p,N}=(\Z/(p^N))^\times$. The commutator matrix
  $\mcR(\bfy)$ is given by
$$\mcR(y)=\left(\begin{array}{cc}&y\\-y&\end{array} \right)$$
and therefore 
$$\mcN_{N,\bfm}=\begin{cases}1&\text{ if }N=0,\\(1-p^{-1})p^N&\text{
if }N\in\N\text{ and }m_1=m_2=0,\\0&\text{ otherwise.}\end{cases}$$
Thus, for all primes $p$,
\begin{align*}
\zeta^{{\rm irr}}_{G,p}(s)&=\sum_{N\in\N_0, \bfm\in\N_0^2}\mcN_{N,\bfm}p^{-Ns+(m_1+m_2)s/2}\\
&=1+\sum_{N\in\N}(1-p^{-1})p^{(1-s)N}\\
&=(1-p^{-s})/(1-p^{1-s}),
\end{align*}
or, equivalently,
$$\zirr_G(s)=\sum_{m=1}^\infty\phi(m)m^{-s}=\zeta(s-1)\zeta(s)^{-1},$$
where $\phi$ denotes the Euler totient function. This was first proved
in \cite[Theorem 5]{NunleyMagid/89}, by entirely different means.
\end{example}

Notice that the local factors of the representation zeta function of
the Heisenberg group all satisfy the functional equation $$\zeta^{{\rm
    irr}}_{G,p}(s)|_{p\rightarrow p^{-1}}=p\,\zeta^{{\rm
    irr}}_{G,p}(s).$$ This generalises in the following way:
\begin{theorem}\cite[Theorem D]{Voll/06a}\label{theorem D} Let $G$ be a $\T$-group
  with derived group $G'$ of Hirsch length $d'$. Then, for almost
  all primes $p$,
\begin{equation}\label{funeq representations}
\nonumber\zirr_{G,p}(s)|_{p\rightarrow p^{-1}} =  p^{d'}\zirr_{G,p}(s).
\end{equation}
\end{theorem}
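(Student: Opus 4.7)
The plan is to adapt the proof strategy of Theorem~\ref{theorem voll annals} to the representation-theoretic setting, exploiting the Kirillov--Howe orbit parametrisation already used to derive~\eqref{formula reps class 2} and~\eqref{formula reps poincare}. First I would promote the Poincar\'e-type series~\eqref{formula reps poincare} to a $p$-adic integral, in analogy with~\eqref{equation igusa}. Specifically, the condition $\nu(\mcR(\bfl))=\bfm$ is equivalent to a list of inequalities among the $p$-adic valuations of the sets of $j \times j$ minors of $\mcR(\bfl)$ for $j\in[d]$. Writing $\bff_j(\bfy)$ for the set of these minors, this allows one to express $\zirr_{G,p}(s)$ as an integral over $W_p := \Zp^{d'} \setminus p\Zp^{d'}$ whose integrand is a monomial in the quantities $\|\bff_j(\bfl)\|_p$, in complete analogy with the construction of the integrals~\eqref{ZI formula}. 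The overall factor $p^{d'}$ on the right-hand side of the functional equation should ultimately be traceable to the dimension of the domain $W_p$ of integration (after inversion of the Haar measure on it), in the same way that the factor $p^{\binom{n}{2} - ns}$ in Theorem~\ref{theorem voll annals} reflects the dimension of the relevant flag variety.

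Second, I would decompose this integral via a Bruhat-style partition. Acting by $\GL_d(\Zp)\times\GL_d(\Zp)$ on $\mcR(\bfl)$ (row and column operations), one can partition the characters according to a combinatorial type $I\subseteq[d-1]$ which records the jump pattern of the elementary divisor sequence. This would yield a decomposition
\begin{equation*}
\zirr_{G,p}(s) \;=\; \sum_{I\subseteq[d-1]} \binom{d}{I}_{p^{-1}} W_I(p^{-s}),
\end{equation*}
in which each $W_I(p^{-s})$ is a $p$-adic integral over a parabolic-type stratum inside $\GL_d(\Zp)\times W_p$. The goal is then to verify that the family $(W_I(p^{-s}))_{I\subseteq[d-1]}$ satisfies the inversion identities~\eqref{equation IP} of Proposition~\ref{proposition IP}, so that the latter applies and produces a functional equation of the shape~\eqref{equation funeq}, which one rescales and tracks carefully to pick up the weight $p^{d'}$ rather than $p^{\binom{n}{2}}$.

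The key technical step is establishing these inversion identities for the $W_I$. As in Section~\ref{subsection local funeqs}, the integrand of $W_I$ is only \emph{locally} monomial in the variable $\bfl$; globally the minor sets $\bff_j(\bfy)$ define a non-trivial subvariety of $\GL_d/B$. The standard remedy is to apply the principalisation result Theorem~\ref{theorem principalisation} to the ideal generated by $\prod_j (\bff_j(\bfy))$, producing coordinates in which the integrand becomes monomial. On each chart one may then invoke the Stanley-type reciprocity of Proposition~\ref{proposition stanley}, from which the inversion properties~\eqref{equation IP} follow term by term and assemble into the required global identity.

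The hard part is handling nilpotency class $c>2$. For such $G$, the co-adjoint action on restrictions of characters to $L'$ is non-trivial, so Theorem~\ref{repgrowth class 2 nilpotent} must be replaced by a version that tracks stabilisers at each step of the lower central series. This forces the integrand to involve several matrices of linear forms (one essentially for each graded piece of the Lie ring), and the elementary divisor combinatorics has to be refined accordingly -- in particular, the combinatorial indexing set becomes richer than a single subset of $[d-1]$, and one needs a nested family of Bruhat-type decompositions. Verifying that a single principalisation can make \emph{all} of these ideals simultaneously locally monomial, and that the resulting multi-variable Stanley reciprocity still yields the clean factor $p^{d'}$ (and no spurious $s$-dependence), is where the real work lies. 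Since the principalisation is non-constructive, one only obtains the conclusion for primes of good reduction, which accounts for the `almost all primes' hypothesis.
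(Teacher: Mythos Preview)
Your overall philosophy is right --- express $\zirr_{G,p}(s)$ as a $p$-adic integral whose integrand is built from the minors (in fact the Pfaffians, since $\mcR(\bfy)$ is antisymmetric) of the commutator matrix, principalise the ideals these generate, and deduce the functional equation from a Weil-conjecture symmetry combined with a reciprocity of Stanley type (Proposition~\ref{proposition stanley}). The paper itself does not give a proof here but points exactly in this direction: it says the Poincar\'e series~\eqref{formula reps poincare} ``may, in analogy to equation~\eqref{equation igusa}, be expressed in terms of a $p$-adic integral'', i.e.\ in analogy with \emph{Igusa's} integral, not with the flag-decomposed integrals~\eqref{ZI formula} of Section~\ref{subsection local funeqs}.

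Your second paragraph, however, contains a genuine misconception. Partitioning characters $\bfl$ according to the jump type $I\subseteq[d-1]$ of the elementary divisors of $\mcR(\bfl)$ does \emph{not} produce a decomposition of the shape $\sum_{I}\binom{d}{I}_{p^{-1}}W_I(p^{-s})$: the number of $\bfl\in W_{p,N}$ realising a given divisor type is an arithmetic quantity depending on the specific forms defining $\mcR$, not the combinatorial Gaussian binomial $\binom{d}{I}_{p^{-1}}$. Already for the Heisenberg group ($d=2$, $d'=1$) every primitive $\bfl$ gives $\mcR(\bfl)$ of divisor type $(0,0)$, so only $I=\varnothing$ occurs; the inversion hypothesis~\eqref{equation IP} would force $W_\varnothing|_{p\to p^{-1}}=W_\varnothing$, which fails since $\zirr_{G,p}(s)|_{p\to p^{-1}}=p\,\zirr_{G,p}(s)\neq\zirr_{G,p}(s)$. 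More structurally, Proposition~\ref{proposition IP} with index set $[d-1]$ would output the symmetry factor $(-1)^{d-1}p^{\binom{d}{2}}$, which has nothing to do with $p^{d'}$.

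The route actually taken in \cite{Voll/06a} bypasses Proposition~\ref{proposition IP} altogether in the representation-theoretic setting. One works directly with a generalised Igusa integral over a $d'$-dimensional domain (essentially $(p\Zp)\times \Zp^{d'}$, the first coordinate encoding the period $p^N$), principalises the product of the minor ideals $(\bff_j(\bfy))$ over $\mathbb{P}^{d'-1}$, and obtains the functional equation from the Denef--Meuser mechanism: on each chart the integrand is monomial, Proposition~\ref{proposition stanley} supplies the cone reciprocity, and the Weil symmetry $b_V(p^{-1})=p^{-\dim V}b_V(p)$ for the smooth projective strata accounts for the rest. The exponent $d'$ arises simply from the dimension of this domain; there is no flag variety in $\Fp^d$ anywhere in the argument. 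Your first paragraph already had this right --- the detour through $[d-1]$-indexed Bruhat cells should be dropped. Your remarks on the additional bookkeeping for nilpotency class $c>2$ (several commutator matrices, simultaneous principalisation, the ``almost all primes'' caveat) are accurate.
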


\subsubsection{Arithmetic groups}\label{subsubsection reps p-adic analytic groups}
Let $k$ be a number field with ring of integers $\mathcal{O}$ and let
$G=\bfG(\O_S)$ be an arithmetic lattice in a semisimple, simply
connected and connected $k$-defined algebraic group~$\bfG$ or, for
short, an arithmetic group. Recall that $G$ is said to have the
Congruence Subgroup Property (CSP) if every finite index subgroup of
$G$ is a congruence subgroup. (See~Section~3 of Nikolov's notes for
definitions of these terms.) Recall further that, if $G$ is rigid, the
representation zeta function of~$G$,
$$\zeta^{{\rm irr}}_G(s)=\sum_{n=1}^\infty r_n(G)n^{-s},$$ has finite
abscissa of convergence if and only if $G$ has polynomial
representation growth (PRG).

\begin{theorem}\cite[Theorems 1.2 and 1.3]{LubotzkyMartin/04}
Let $G$ be an arithmetic group. Then $G$ has PRG if and only it has the CSP.
\end{theorem}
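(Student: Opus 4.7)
The plan is to establish the two implications separately, beginning with the direction that CSP implies PRG. For this direction, I would first invoke Margulis super-rigidity (available since $\bfG$ is assumed semisimple and simply connected of higher rank; for rank one, additional input such as Corlette--Gromov--Schoen is required) to reduce the problem to representations coming from rational representations of $\bfG$. Combined with the CSP, this implies that every finite-dimensional complex irreducible representation of $G$ factors through a congruence quotient of $G$, that is, through some $\bfG(\O_S/\mathfrak{a})$.

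Next, the CSP identifies the profinite completion $\hat{G}$ with the congruence completion, which, by strong approximation, is (up to a finite kernel) the restricted direct product $\prod_{v\notin S}\bfG(\O_v)$ over finite places $v$ of $k$ outside $S$. Any continuous irreducible representation of such a product decomposes as a tensor product of continuous irreducibles of finitely many factors, yielding an Euler-type decomposition
\[
\zirr_G(s) = \prod_{v\notin S}\zirr_{\bfG(\O_v)}(s),
\]
up to finitely many local corrections accounting for the archimedean places and the (finite) congruence kernel. It then suffices to establish convergence of this product on some right half-plane. Each local factor is the representation zeta function of the compact $p$-adic analytic group $\bfG(\O_v)$, which by the FAb property inherited from the simple connectedness of $\bfG$ is rigid; results of Jaikin-Zapirain and Larsen--Lubotzky provide abscissae of convergence that are uniformly bounded in $v$. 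Comparison with a partial Dedekind zeta factor then yields convergence, hence PRG.

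For the converse, I would argue by contraposition: assume that $G$ fails to have the CSP, so that the congruence kernel $C(G)=\ker(\hat{G}\to\ol{G})$ is infinite. The strategy is to produce too many finite-dimensional complex representations of $G$ to be compatible with polynomial growth. Since $G$ is finitely generated, so is $\hat{G}$ topologically; hence $C(G)$ has arbitrarily large finite continuous quotients, each of which corresponds to a finite quotient of $G$ that is not a congruence quotient. Pulling back their irreducible representations produces irreducible representations of $G$ not already accounted for by the congruence part. A careful analysis, using the Margulis normal subgroup theorem together with lower bounds on the minimal faithful dimensions of the finite simple groups appearing as composition factors of these quotients (provided by the classification of finite simple groups), shows that the resulting lower bound on $r_n(G)$ is super-polynomial.

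The main obstacle is the converse implication: converting the existence of infinitely many non-congruence finite quotients of $G$ into an effective super-polynomial lower bound on the number of low-dimensional representations requires nontrivial structural results about the congruence kernel (in particular, its centrality in higher rank, as established by Serre, Raghunathan and others) combined with CFSG-based dimension estimates. The forward direction, by contrast, leverages more standard machinery once the reduction to local factors is in hand; the principal work there is hidden in the uniform bounds on the local representation zeta functions of the $\bfG(\O_v)$.
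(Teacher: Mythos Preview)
The paper does not prove this theorem: it is a survey, and the result is simply cited from \cite{LubotzkyMartin/04} without any argument; immediately after the statement the text reads ``Assume from now on that $G$ is an arithmetic group with the CSP.'' There is therefore no ``paper's own proof'' to compare your proposal against.

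As a sketch of the Lubotzky--Martin argument itself, your outline is broadly on the right track but has a couple of inaccuracies and anachronisms worth flagging. First, in the CSP $\Rightarrow$ PRG direction, it is not true that every irreducible representation of $G$ factors through a congruence quotient: representations with infinite image (restrictions of rational representations of $\bfG$) are genuinely present, and this is exactly why the archimedean factor $\zirr_{\bfG(\C)}(s)$ appears in the Euler product \eqref{euler product arithmetic groups}. Your parenthetical mention of ``local corrections accounting for the archimedean places'' does not quite capture this; super-rigidity is used precisely to show that every irreducible is a tensor product of a rational piece and a piece factoring through the congruence completion, not to eliminate the rational piece. Second, you invoke Jaikin-Zapirain \cite{Jaikin/06} and Larsen--Lubotzky \cite{LarsenLubotzky/08} to bound the local factors, but these appeared \emph{after} \cite{LubotzkyMartin/04}; the original argument uses instead estimates on representation growth of finite groups of Lie type and pro-$p$ techniques available at the time. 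Finally, for the converse, your outline is correct in spirit (infinite congruence kernel forces super-polynomial growth via its centrality and character theory), but the ``careful analysis'' you allude to is the substantive content of the Lubotzky--Martin paper and is not a routine step.
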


Assume from now on that $G$ is an arithmetic group with the CSP.

\begin{proposition}\cite[Proposition~4.6]{LarsenLubotzky/08}
\label{proposition euler product reps}
There is a subgroup $G_0$ of $G$ of finite index in $G$ such that
\begin{equation}\label{euler product arithmetic groups}
\zirr_{G_0}(s)=\zirr_{\bfG(\C)}(s)^{|S_\infty|}\prod_{v\not\in
S}\zirr_{L_v}(s),
\end{equation}
 where $S_\infty$ denotes the set of archimedean valuations of $k$,
 $L_v$ is an open subgroup of $\bfG(\mathcal{O}_v)$ and
 $\zirr_{\bfG(\C)}(s)$ (resp. $\zirr_{L_v}(s)$) enumerates irreducible
 rational (resp. continuous) representations of $\bfG(\C)$
 (resp. $L_v$).
\end{proposition}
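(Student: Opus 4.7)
The strategy is to use the Congruence Subgroup Property (CSP), strong approximation, and Margulis superrigidity in order to realise every irreducible finite-dimensional complex representation of some finite-index $G_0\leq G$ as a tensor product of local components, one for each place of $k$. Once this local decomposition is in place, the asserted Euler product follows from the multiplicativity of dimension under tensor product and the classification of irreducibles of a direct product as outer tensor products of irreducibles of the factors.

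First I would invoke the CSP and strong approximation to identify, after passing to a finite-index subgroup $G_0\leq G$, the profinite completion $\widehat{G_0}$ with a direct product $\prod_{v\not\in S}L_v$, where each $L_v$ is an open subgroup of $\bfG(\O_v)$ equal to $\bfG(\O_v)$ for all but finitely many~$v$. Strong approximation for the simply connected group $\bfG$ makes the diagonal image of $G_0$ dense in $\prod_{v\not\in S}\bfG(\O_v)$, and the CSP then identifies $\widehat{G_0}$ with (an open subgroup of) this profinite product. Any representation of $G_0$ with finite image therefore corresponds canonically to a continuous representation of $\widehat{G_0}$, whose irreducible constituents decompose as outer tensor products $\bigotimes_v\sigma_v$, each $\sigma_v$ being an irreducible of $L_v$.

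Second I would apply Margulis superrigidity: every irreducible representation $\rho$ of $G_0$ splits, after restriction to a further finite-index subgroup (which we absorb into $G_0$), as $\rho_\infty\otimes\rho_f$, where $\rho_\infty$ extends to a rational representation of $\bfG(\C)^{|S_\infty|}$ via the archimedean embeddings of $k$, and $\rho_f$ has finite image, factoring through $\widehat{G_0}$ as above. This yields a bijection between irreducible representations of $G_0$ and pairs $(\sigma_\infty,\sigma_f)$ consisting of an irreducible rational representation of $\bfG(\C)^{|S_\infty|}$ and an irreducible continuous representation of $\prod_v L_v$. Using $\dim(\rho_\infty\otimes\rho_f)=\dim(\rho_\infty)\dim(\rho_f)$ and $\dim(\bigotimes_v\sigma_v)=\prod_v\dim(\sigma_v)$, the Dirichlet series $\zirr_{G_0}(s)$ factors as $\zirr_{\bfG(\C)}(s)^{|S_\infty|}\prod_{v\not\in S}\zirr_{L_v}(s)$, which is precisely the asserted equation~\eqref{euler product arithmetic groups}.

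The main obstacle is coordinating the three finite-index passages---for CSP, for strong approximation, and for the Margulis extension theorem---into a single choice of $G_0$ that preserves the bijective correspondence between irreducibles of $G_0$ and the claimed pairs, without introducing multiplicities or omissions. Closely related is the need to ensure convergence of each local factor $\zirr_{L_v}(s)$: this rests on each $L_v$ being FAb, itself a consequence of $\bfG$ being simply connected and semisimple.
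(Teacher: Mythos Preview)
The paper does not prove this proposition; it merely quotes it from \cite{LarsenLubotzky/08}. There is thus no ``paper's own proof'' to compare against. Your outline is, however, essentially the argument Larsen and Lubotzky give: CSP plus strong approximation identifies $\widehat{G_0}$ with $\prod_{v\notin S}L_v$, and Margulis superrigidity splits an arbitrary finite-dimensional representation into a rational (archimedean) piece tensored with a finite-image (non-archimedean) piece. The Euler product then drops out of the tensor decomposition of irreducibles over a direct product.

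Your identification of the main obstacle is apt and is where the real work lies. The phrase ``after restriction to a further finite-index subgroup (which we absorb into $G_0$)'' glosses over the genuine difficulty: superrigidity, as usually stated, produces for each $\rho$ a finite-index subgroup on which $\rho$ agrees with a rational representation, and this subgroup \emph{a priori} depends on $\rho$. To get a single $G_0$ that works simultaneously for all $\rho$ one needs a sharper form of the extension theorem (as in \cite[Proposition~4.2]{LarsenLubotzky/08}), together with the hypothesis that $\bfG$ is simply connected so that rational representations of the ambient group restrict without ambiguity. A second point you pass over lightly is why the archimedean contribution is $\zirr_{\bfG(\C)}(s)^{|S_\infty|}$ rather than a product over possibly non-isomorphic real forms: this uses that rational representations of a real form are governed by the complexification, so each archimedean place, real or complex, contributes a copy of $\zirr_{\bfG(\C)}(s)$. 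Both points are handled in \cite{LarsenLubotzky/08}, and your sketch would be complete once you cite or reproduce those arguments.
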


The fact that we need to pass to a finite index subgroup in
Proposition~\ref{proposition euler product reps} is insubstantial if we are mainly interested in the representation zeta function's abscissa of convergence. Indeed, we have the following

\begin{lemma}(\cite[Corollary 4.5]{LarsenLubotzky/08}) 
If $G_0$ is a finite index subgroup of the rigid PRG group
$G$, then the abscissae of convergence of the zeta functions
$\zirr_G(s)$ and $\zirr_{G_0}(s)$
coincide.
\end{lemma}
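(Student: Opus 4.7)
The plan is to sandwich $\zirr_{G_0}(s)$ and $\zirr_{G}(s)$ between constant multiples of one another using restriction and induction, forcing them to converge on exactly the same right half-planes. First I would reduce to the case where $G_0$ is normal in $G$ by passing to the normal core $N:=\bigcap_{g\in G}gG_0g^{-1}$, which has finite index in $G$ and is normal in both $G$ and $G_0$; applying the normal case twice along the chain $N\subseteq G_0\subseteq G$ then yields equality of the abscissae of $\zirr_G$, $\zirr_N$, and $\zirr_{G_0}$.

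Assume henceforth $G_0\triangleleft G$ and set $m:=[G:G_0]$. The key input is Clifford's theorem: if $\rho\in\mathrm{Irr}(G)$ and $\sigma\in\mathrm{Irr}(G_0)$ is an irreducible constituent of $\mathrm{Res}^G_{G_0}\rho$, then the other distinct irreducible constituents of $\mathrm{Res}^G_{G_0}\rho$ lie in the $G/G_0$-orbit of $\sigma$, so there are at most $m$ of them and they all share the common dimension $\dim\sigma$, which satisfies $\dim\rho/m\leq\dim\sigma\leq\dim\rho$.

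Using Frobenius reciprocity I would fix, for each $\sigma\in\mathrm{Irr}(G_0)$, an irreducible constituent $\rho(\sigma)\in\mathrm{Irr}(G)$ of $\Ind_{G_0}^G\sigma$ whose restriction contains $\sigma$. The resulting map $\sigma\mapsto\rho(\sigma)$ is at most $m$-to-one (each fibre consists of distinct constituents of $\mathrm{Res}^G_{G_0}\rho(\sigma)$), and $\dim(\sigma)^{-s}\leq m^s\dim(\rho(\sigma))^{-s}$ for real $s>0$, so that
\[\zirr_{G_0}(s)\leq m^{s}\sum_{\sigma\in\mathrm{Irr}(G_0)}\dim(\rho(\sigma))^{-s}\leq m^{s+1}\zirr_G(s).\]
Dually, fixing for each $\rho\in\mathrm{Irr}(G)$ a constituent $\sigma(\rho)\in\mathrm{Irr}(G_0)$ of $\mathrm{Res}^G_{G_0}\rho$ gives $\dim(\rho)^{-s}\leq\dim(\sigma(\rho))^{-s}$ and a map whose fibres are contained in the constituents of $\Ind_{G_0}^G\sigma$ (a representation of dimension $m\dim\sigma$, hence with at most $m$ distinct constituents), yielding $\zirr_G(s)\leq m\cdot\zirr_{G_0}(s)$.

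Combining both estimates, the two Dirichlet series share identical abscissae of convergence. The main subtlety is that Clifford's ramification multiplicity $e$ may be large and would spoil a naive $m$-to-one count; this is circumvented by counting only \emph{distinct} irreducible constituents, which is exactly what the Dirichlet series sees (no multiplicities are summed). The same sandwich estimates also show that $G_0$ inherits both rigidity and polynomial representation growth from $G$, so that $\zirr_{G_0}(s)$ is a bona fide convergent Dirichlet series in the first place.
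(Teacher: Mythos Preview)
The paper does not supply its own proof of this lemma; it simply quotes the result from Larsen--Lubotzky. Your Clifford-theoretic sandwich argument is correct and is the standard route to this fact (and is, in outline, the argument in the cited source).

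One step is a little compressed and worth making explicit: when you assert that $\Ind_{G_0}^G\sigma$, being of dimension $m\dim\sigma$, has at most $m$ distinct irreducible constituents, this does not follow from the dimension alone. You need the additional observation that every irreducible constituent $\rho$ of $\Ind_{G_0}^G\sigma$ satisfies $\dim\rho\geq\dim\sigma$, which holds by Frobenius reciprocity combined with the Clifford inequality $\dim\sigma\leq\dim\rho$ you recorded earlier. Once that is spelled out, the fibre bound is immediate and your two inequalities
\[
\zirr_{G_0}(s)\leq m^{s+1}\zirr_G(s)\quad\text{and}\quad \zirr_G(s)\leq m\,\zirr_{G_0}(s)\qquad(s>0\text{ real})
\]
force the abscissae to coincide. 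Note also that, in your reduction to the normal case, you apply the normal-case argument to $N\triangleleft G_0$ before having verified that $G_0$ is itself rigid PRG; you do address this at the end, but logically it belongs earlier, since the sandwich for $N\triangleleft G$ already yields rigidity and PRG for $N$, and then the inequality $\zirr_{G_0}(s)\leq[G_0:N]^{s+1}\zirr_N(s)$ transfers these to~$G_0$.
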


\begin{example} 
Let $G=\text{SL}_n(\Z)$. It is well-known that $\text{SL}_n(\Z)$
satisfies the CSP if and only if $n\geq 3$. In this case,
Proposition~\ref{proposition euler product reps} yields that
$$\zirr_{\text{SL}_n(\Z)}(s)=\zirr_{\text{SL}_n(\C)}(s)\prod_{p \text{
prime}}\zirr_{\text{SL}_n(\Zp)}(s).$$
\end{example}

Already at first glance the Euler product~\eqref{euler product
arithmetic groups} differs from the Euler factorisations we have
encountered before by the presence of a factor `at infinity'. The
Euler factor $\zirr_{\bfG(\C)}(s)$ is, however,
comparatively well understood. In particular, we know its abscissa of
convergence in certain cases.

\begin{theorem}\cite[Theorem 5.1]{LarsenLubotzky/08}\label{theorem abscissa infinity}
If $\bfG(\C)$ is defined as above then the abscissa of convergence of
$\zirr_{\bfG(\C)}(s)$ is equal to $\rho/\kappa$, where
$\rho=\rm{rk}(G)$ and $\kappa=|\Phi^+|$ is the number of positive
roots.
\end{theorem}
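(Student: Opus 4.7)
The plan is to use the Weyl dimension formula to reduce the claim to a lattice-point counting problem in the dominant chamber of the weight lattice. Since $\bfG$ is semisimple, simply connected and connected, the Theorem of the Highest Weight puts the isomorphism classes of finite-dimensional irreducible rational representations of $\bfG(\C)$ in bijection with dominant integral weights. Fixing a maximal torus, a positive system $\Phi^+$, and fundamental weights $\omega_1,\dots,\omega_\rho$ identifies the dominant weight lattice with $\N_0^\rho$ via $\lambda = \sum_i n_i\omega_i$, and the Weyl dimension formula
$$
\dim V_\lambda \;=\; \prod_{\alpha\in\Phi^+}\frac{\langle\lambda+\delta,\alpha^\vee\rangle}{\langle\delta,\alpha^\vee\rangle},\qquad \delta = \sum_i\omega_i,
$$
exhibits $\dim V_\lambda$ as $P(n_1+1,\dots,n_\rho+1)$ with $P(m) = \prod_{\alpha\in\Phi^+} L_\alpha(m)$ and $L_\alpha(m) = \sum_i \langle\omega_i,\alpha^\vee\rangle\,m_i$. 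So $P$ is a product of $\kappa=|\Phi^+|$ linear forms with non-negative integer coefficients, strictly positive on the open positive orthant.

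By Abel summation, $\zirr_{\bfG(\C)}(s) = s\int_1^\infty A(N)\,N^{-s-1}\,dN$ where $A(N) = |\{n\in\N_0^\rho : P(n+\mathbf{1}) \leq N\}|$, so the abscissa of convergence is $\limsup_{N\to\infty}(\log A(N))/\log N$. The heart of the argument is therefore the estimate $A(N)\asymp N^{\rho/\kappa}$. The lower bound is immediate: pick any $m^*$ in the open positive orthant, and observe that the lattice points in a translated scaled cone of size $\sim N^{1/\kappa}$ centred on the ray $\mathbb{R}_{>0}\cdot m^*$ all satisfy $P\leq cN$, giving $\gtrsim N^{\rho/\kappa}$ such points. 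For the upper bound I would decompose the orthant by the subset $J\subseteq[\rho]$ of ``asymptotically large'' coordinates and exploit the observation that a positive root $\alpha$ contributes to $P$ a factor growing with $\max_{i\in J}m_i$ exactly when the coroot $\alpha^\vee$ has non-zero support on some $\alpha_i^\vee$ with $i\in J$; the remaining factors, indexed by roots $\alpha$ with $\alpha^\vee$ supported in $J^c$, are bounded. The effective degree of $P$ on stratum $J$ is therefore $\kappa - |\Phi^+_{J^c}|$, where $\Phi^+_{J^c}$ is the positive system of the Levi subsystem generated by $\{\alpha_j : j\in J^c\}$, and a standard integral comparison bounds the contribution of this stratum by $N^{|J|/(\kappa-|\Phi^+_{J^c}|)}$.

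The decisive combinatorial input is the inequality
$$
\frac{|\Phi^+_K|}{|K|}\;\leq\;\frac{\kappa}{\rho}\qquad\text{for every non-empty }K\subseteq[\rho],
$$
so that each proper stratum contributes $\lesssim N^{\rho/\kappa}$ and the full-chamber stratum $J=[\rho]$ dominates the count. Granting this inequality, Abel summation yields the abscissa $\rho/\kappa$, matching the lower bound. This inequality---that the ratio of positive roots to simple roots is non-increasing under passage to any Levi subsystem---is the main obstacle in the argument; I would verify it either by the classification of irreducible root systems or uniformly, by comparing the Coxeter numbers of Levi subsystems with that of the ambient system.
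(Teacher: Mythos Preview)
The paper does not prove this theorem: it is quoted from \cite{LarsenLubotzky/08}, and the only comment on the proof is that it is ``based [on] the fact that the rational representations of these groups are combinatorially parametrised by their highest weights''. Your sketch is consistent with that hint and is essentially the argument of \cite[Section~5]{LarsenLubotzky/08}: Weyl's dimension formula turns $\zirr_{\bfG(\C)}(s)$ into a sum over $\N_0^\rho$ of $P^{-s}$ with $P$ a product of $\kappa$ linear forms, and the abscissa $\rho/\kappa$ falls out of the homogeneity once the boundary behaviour is controlled.

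Your identification of the Levi inequality $|\Phi^+_K|/|K|\le\kappa/\rho$ as the crux of the upper bound is correct, and the Coxeter-number argument you propose works: for an irreducible root system $|\Phi^+|/\rho=h/2$, and every irreducible component of a standard parabolic subsystem has strictly smaller Coxeter number (the height of the highest root can only drop when simple roots are deleted). One caveat worth flagging: this inequality can fail when the ambient root system is reducible (take $A_1\times A_2$ and $K$ the $A_2$ factor), and indeed for a product the abscissa is $\max_j \rho_j/\kappa_j$ over the simple factors, not $(\sum\rho_j)/(\sum\kappa_j)$. So the statement as quoted should be read with $\bfG$ simple; your argument is fine under that hypothesis.
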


The proof of Theorem~\ref{theorem abscissa infinity} is based the fact
that the rational representations of these groups are combinatorially
parametrised by their highest weights;
see~\cite[Section~5]{LarsenLubotzky/08} for details.

\begin{example}\label{example sl2 reps}
The group $\text{SL}_2(\C)$ has a unique irreducible representation of each finite dimension. Thus
$$\zirr_{\text{SL}_2(\C)}=\sum_{m=1}^\infty
m^{-s}=\zeta(s).$$ Indeed, the abscissa of convergence of the Riemann
zeta function is $1=1/1=\rho/\kappa$.
\end{example}

\begin{theorem}\cite[Theorem 1.2]{Avni/08}\label{theorem avni}
Let $G$ be an arithmetic group which satisfies the CSP. Then the
abscissa of convergence of $\zirr_{G}(s)$ is a rational number.
\end{theorem}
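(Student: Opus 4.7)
The plan is to combine the Euler product structure with a $p$-adic integral description of the non-archimedean factors, in the spirit of the approach used for subgroup zeta functions in Sections~\ref{subsection counting}--\ref{subsection lhde}. By the lemma of Larsen--Lubotzky quoted above, we may replace $G$ by the finite-index subgroup $G_0$ of Proposition~\ref{proposition euler product reps} without affecting the abscissa of convergence. Writing
\[
\zirr_{G_0}(s)=\zirr_{\bfG(\C)}(s)^{|S_\infty|}\prod_{v\not\in S}\zirr_{L_v}(s),
\]
it suffices to bound the abscissa of the non-archimedean Euler product against that of the archimedean factor, since Theorem~\ref{theorem abscissa infinity} supplies the rational value $\rho/\kappa$ for the latter.

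The first main step is to analyse the local factors $\zirr_{L_v}(s)$ for $v\not\in S$. Each $L_v$ is an open compact subgroup of $\bfG(\mcO_v)$ and, passing to a deep enough congruence subgroup, is saturable pro-$p$, so the Kirillov orbit method applies and parametrises continuous irreducible representations by finite co-adjoint orbits of characters on the associated $\mcO_v$-Lie lattice. As in the treatment leading to~\eqref{formula reps poincare}, the radicals of the associated alternating forms are computable in terms of elementary divisors of a matrix of linear forms encoding the Lie bracket, and one can rewrite
\[
\zirr_{L_v}(s)=\int_{X_v} |f_v(\bfx)|_v^{s/2}\,|g_v(\bfx)|_v^{t(s)}\,d\mu_v,
\]
where $X_v\subseteq\mcO_v^N$ is a definable subset, $f_v,g_v$ are polynomials, and $t(s)$ is an affine function of $s$, all defined over $\mcO_k$ and independent of the place $v$. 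This puts us in the realm of (generalised) Igusa zeta functions.

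The second main step is to apply Hironaka's resolution of singularities to $f$ and $g$ over $k$ (cf.~Theorem~\ref{theorem principalisation}), yielding a proper birational model with simple normal crossings exceptional divisor whose numerical data $(N_j,\nu_j)_{j\in T}$ are independent of $v$. For almost all $v$, one obtains an explicit formula of the shape $\zirr_{L_v}(s)=\sum_{U\subseteq T}c_{U,v}\prod_{j\in U}\frac{(q_v-1)q_v^{-\nu_j-N_j s/2}}{1-q_v^{-\nu_j-N_j s/2}}$, with $c_{U,v}$ counting $\mathbb{F}_{q_v}$-points of a geometrically defined variety. From this one reads off that the abscissa of $\zirr_{L_v}(s)$ is the maximum of finitely many rationals $2(1-\nu_j)/N_j$, independent of $v$.

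The third and most delicate step is to pass from local to global: one must show that the abscissa of the Euler product over all $v\not\in S$ is a rational number. This requires controlling the growth of $\zirr_{L_v}(s)-1$ in $q_v$ uniformly, and showing that the global abscissa equals $\max(\rho/\kappa,\,\sigma)$ where $\sigma$ is determined by the combinatorial data of the resolution together with the Weil-type estimates on the point counts $c_{U,v}$. The main obstacle lies precisely here: the non-archimedean contribution is an infinite product whose convergence exponent is not simply the supremum of the local abscissae but a subtler quantity, and one needs a uniform comparison of $\log\zirr_{L_v}(s)$ with an explicit rational function of $q_v$ and $q_v^{-s}$ to identify it. Once this uniformity is established, rationality of the global abscissa follows because the candidate values form a finite set of rationals coming from the resolution data and the archimedean value $\rho/\kappa$.
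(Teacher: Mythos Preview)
The paper does not actually prove this theorem; it merely cites Avni and remarks that the proof ``uses sophisticated tools from algebraic geometry, model theory and the representation theory of finite groups of Lie type'' and that, although the conclusion parallels Theorem~\ref{theorem duSG analytic}, ``its proof requires substantially different methods.'' Your proposal is precisely an attempt to push the du~Sautoy--Grunewald resolution-of-singularities strategy through, so already on the paper's own account you are diverging from Avni's actual argument.

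There are genuine gaps. First, for almost all $v$ the group $L_v$ is all of $\bfG(\mcO_v)$, which is \emph{not} pro-$p$: the Kirillov orbit method only describes representations of the first congruence kernel $\bfG^1(\mcO_v)$, and between that and $\bfG(\mcO_v)$ sits the finite reductive group $\bfG(\Fq)$, whose representation theory (Deligne--Lusztig) must be fed in. This is exactly the ``representation theory of finite groups of Lie type'' the paper flags, and without it $\zirr_{L_v}(s)$ is not a single Igusa-type integral of the shape you write. Second, your uniformity claim---that the integrand is given by polynomials ``defined over $\mcO_k$ and independent of the place $v$''---is where Avni invokes model-theoretic machinery (definable equivalence relations in the Denef--Pas language, motivic integration \`a la Hrushovski--Martin and Cluckers--Loeser); a global resolution of singularities alone does not produce such a formula, because the parametrisation of irreducible representations by co-adjoint orbits is not given by evaluating a fixed polynomial map. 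Third, you yourself label the local-to-global passage ``the main obstacle'' and do not carry it out: identifying the Euler-product abscissa with a rational number from a finite list requires uniform asymptotics for $\zirr_{L_v}(s)-1$ in $q_v$, and it is exactly this uniformity that the model-theoretic input provides and that your sketch leaves open.
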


The proof of this deep result uses sophisticated tools from algebraic
geometry, model theory and the representation theory of finite groups
of Lie type. We only remark that whilst its conclusion is analogous to
one of the conclusions of Theorem~\ref{theorem duSG analytic}, its
proof requires substantially different methods.

\subsubsection{Compact $p$-adic analytic groups}

The groups $L_v$ in Proposition~\ref{proposition euler product reps}
are compact $p$-adic analytic groups. Let, more generally, $G$ be a
finitely generated profinite group. It is well-known (cf.~Section 10.1
in Klopsch's lecture notes) that the number $r_n(G)$ of isomorphism
classes of continuous irreducible $n$-dimensional complex
representations of $G$ is finite if and only if $G$ is FAb, i.e.~if
and only if every open subgroup of $G$ has finite abelianisation.

\begin{theorem}\cite[Theorem 1.1]{Jaikin/06}\label{theorem jaikin JAMS}
Let $G$ be a compact FAb $p$-adic analytic group with $p>2$. Then
there are natural numbers $n_1,\dots,n_k$ and functions
$f_1(p^{-s}),\dots,f_k(p^{-s})$, rational in $p^{-s}$, such that
$$\zeta^{{\rm irr}}_G(s)=\sum_{i\in[k]}n_i^{-s}f_i(p^{-s}).$$
\end{theorem}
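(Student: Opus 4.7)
The strategy is to reduce to the saturable pro-$p$ case by Clifford theory, apply the Kirillov orbit method there, and express the orbit enumeration as a $p$-adic integral whose rationality in $p^{-s}$ is guaranteed by Denef's theorem. The separation into a finite $n_i^{-s}$ prefactor and a rational function $f_i(p^{-s})$ will reflect the distinction between the contribution of the finite quotient and that of the pro-$p$ radical.

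\emph{Step 1: Clifford-theoretic reduction.} By Lazard's theorem the compact $p$-adic analytic group $G$ contains an open normal uniform pro-$p$ subgroup $N$, and the hypothesis $p>2$ ensures that $N$ is saturable. Set $Q:=G/N$, a finite group. Clifford theory attaches to each $\rho\in\mathrm{Irr}(G)$ a $Q$-orbit of some $\sigma\in\mathrm{Irr}(N)$, a (possibly projective) extension $\tilde\sigma$ of $\sigma$ to the stabilizer $G_\sigma$, and an irreducible $\alpha_\sigma$-projective representation $\pi$ of $G_\sigma/N$, such that $\rho=\mathrm{Ind}_{G_\sigma}^G(\tilde\sigma\otimes\pi)$. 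This yields
\[
\zeta^{\mathrm{irr}}_G(s)=\sum_{[\sigma]\in\mathrm{Irr}(N)/Q}[G:G_\sigma]^{-s}\,\dim(\sigma)^{-s}\,\zeta^{\alpha_\sigma}_{G_\sigma/N}(s),
\]
where $\zeta^{\alpha_\sigma}_{G_\sigma/N}(s)$ enumerates $\alpha_\sigma$-projective irreducibles of the finite group $G_\sigma/N$, a Dirichlet polynomial with bounded-integer dimensions.

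\emph{Step 2: Kirillov orbit method on $N$.} Since $N$ is saturable and $p>2$, the Kirillov correspondence (as invoked for saturable pro-$p$ groups, cf.\ Section~\ref{subsection reps}) gives a bijection between $\mathrm{Irr}(N)$ and the coadjoint orbits $\Omega\subset\widehat{\mathcal{L}}$ of continuous characters on the associated $\Zp$-Lie lattice $\mathcal{L}=\log(N)$, with $\dim(U_\Omega)=|\Omega|^{1/2}$. The dimensions are therefore $p$-powers, computed by the Orbit--Stabilizer theorem as $[\mathcal{L}:\mathrm{Rad}_\psi]^{1/2}$ for a representative $\psi$ of the orbit, where $\mathrm{Rad}_\psi$ is the radical of the antisymmetric form $(x,y)\mapsto\psi([x,y])$.

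\emph{Step 3: From orbits to $p$-adic integrals.} Stratify the characters $\psi\in\widehat{\mathcal{L}}$ by the $p$-adic elementary divisor type of the commutator matrix $\mcR(\bfy)$ of $\mathcal{L}$, and simultaneously by the $Q$-stabilizer type of the associated coadjoint orbit (the latter being cut out by algebraic equations in the coordinates of $\psi$ since the $Q$-action on $\widehat{\mathcal{L}}$ is algebraic). Exactly as in the passage from~\eqref{formula reps class 2} to~\eqref{formula reps poincare}, the resulting sum $\sum_{[\sigma]}[G:G_\sigma]^{-s}\dim(\sigma)^{-s}$ is encoded as a finite $\Z$-linear combination of Denef-type $p$-adic integrals whose integrands are products of $p$-adic norms of minors of $\mcR(\bfy)$ raised to $\Z$-linear functions of $s$. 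Denef's theorem on rationality of such integrals yields that each contribution is a rational function in $p^{-s}$; multiplying by the Clifford prefactors $[G:G_\sigma]^{-s}\zeta^{\alpha_\sigma}_{G_\sigma/N}(s)$, whose dimensions take only finitely many integer values $n_i$ as $[\sigma]$ varies, produces the claimed form $\sum_i n_i^{-s}f_i(p^{-s})$.

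\emph{Main obstacle.} The delicate point is Step 1: one must show that the Clifford data $(G_\sigma,\alpha_\sigma,\pi)$ vary in \emph{definable families} over the Kirillov-parametrized $Q$-orbits of $\widehat{\mathcal{L}}$, so that the contribution of each family fits into a single Denef-style integral, and that only finitely many $Q$-conjugacy classes of Clifford data occur (this finiteness is what produces the finite list of integers $n_i$). The $G$-equivariance of the Kirillov map under the coadjoint action, combined with the finiteness of $Q$ and of $H^2(H,\C^\times)$ for subgroups $H\leq Q$, should suffice; nevertheless, making the $2$-cocycle class $[\alpha_\sigma]$ depend constructibly on $\psi$ requires careful bookkeeping, and is the technical heart of the argument. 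Rationality of the resulting integrals, by contrast, is a classical consequence of resolution of singularities in the $p$-adic setting.
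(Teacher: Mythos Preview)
The paper does not give its own proof of this theorem: it is quoted as \cite[Theorem 1.1]{Jaikin/06} and the only commentary on the proof is the single sentence ``the proof of Theorem~\ref{theorem jaikin JAMS} is based on a Kirillov orbit method for compact $p$-adic analytic groups.'' Your proposal is consistent with that remark and with the actual strategy of Jaikin's paper --- Clifford theory to pass to an open normal uniform (saturable, since $p>2$) pro-$p$ subgroup, Kirillov--Howe orbit method to parametrise its irreducible characters by coadjoint orbits, and a rationality argument for the resulting generating function --- so at the level of an outline you are on the right track.

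Two cautions. First, your Step~3 leans on the passage from~\eqref{formula reps class 2} to~\eqref{formula reps poincare}, but those formulae are specific to class-$2$ nilpotent $\T$-groups, where the coadjoint action is trivial on $L'$ and over-counting is not an issue; in the general $p$-adic analytic setting the bookkeeping is genuinely more involved, and Jaikin's argument proceeds via definability in the language of valued fields rather than via explicit cone-type integrals. Second, the ``main obstacle'' you flag --- constructibility of the Clifford data $(G_\sigma,[\alpha_\sigma])$ as $\psi$ varies --- is indeed the crux, and the finiteness of $Q$ and of Schur multipliers alone does not obviously make the cocycle class $[\alpha_\sigma]$ a definable function of $\psi$; Jaikin handles this by working with a fixed system of coset representatives and showing that the relevant conditions are first-order definable, which is where the real work lies.
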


This deep result takes a more complicated form than the rationality
results for Euler factors we have met before. It should not surprise
us, however, that the representation zeta function of a $p$-adic
analytic group is not, in general, a rational function just in
$p^{-s}$: whereas the continuous representations of a pro-$p$ group
clearly all have dimension a power of $p$ (as they factor over finite
index normal subgroups of the group), a $p$-adic analytic group is
only \emph{virtually} pro-$p$, i.e.~it has a pro-$p$ subgroup of
finite index. The natural numbers $n_1,\dots,n_k$ in
Theorem~\ref{theorem jaikin JAMS} can be interpreted as the dimensions
of the representations of the quotient of $G$ by a normal, finite
index pro-$p$ subgroup.

As the work on representation zeta functions for $\T$-groups sketched
in Section~\ref{subsubsection reps T-groups}, the proof of
Theorem~\ref{theorem jaikin JAMS} is based on a Kirillov orbit method
for compact $p$-adic analytic groups.

Explicit examples of representation zeta functions of compact $p$-adic
groups are thin on the ground. In~\cite{Jaikin/06}, Jaikin gives the
example of $\zeta^{{\rm irr}}_{\text{SL}_2(\Zp)}(s)$ for odd
$p$. (Note, however, that the Euler product over the local factors
(including $p=2$ and `infinity') only counts `congruence
representations' of $\text{SL}_2$, as $\text{SL}_2$ does not satisfy
the CSP.) In~\cite{KlopschVoll/08}, formulae are developed for the
representation zeta functions of the principal congruence subgroups
$\text{SL}_3^k(\Zp)$ for all primes~$p$ and $k\in \N$ ($k\geq 2$ if
$p=2$), and the abscissa of convergence of
$\zirr_{\text{SL}_3(\Z)}(s)$ is determined. A result on functional
equations of representation zeta functions of pro-$p$ groups in
globally defined families can also be found in this paper (cf.~Theorem
10.3 in Klopsch's lecture notes).

\subsection{Further variations}\label{subsection other variations}

\subsubsection{Nilpotent groups} 
Besides the zeta functions counting all subgroups, normal subgroups
 and representations of a $\T$-group~$G$, people have studied the zeta
 functions enumerating subgroups of~$G$ which are isomorphic to~$G$
 (\cite{GSS/88}), the `pro-isomorphic' zeta functions enumerating
 subgroups whose profinite completion is isomorphic to the profinite
 completion of~$G$ (\cite{duSLubotzky/96, Berman/07}, and the zeta
 functions enumerating subgroups up to conjugacy
 (\cite[Section~3.2]{Voll/06a}). The last two satisfy Euler product
 decompositions into Euler factors which are rational in~$p^{-s}$.

\subsubsection{Compact $p$-adic analytic groups} 
Let $G$ be a compact $p$-adic analytic group. Recall that such a group
is virtually pro-$p$. In~\cite{duS/93} du Sautoy proved that the
`local' zeta function
$$\zeta_{G,p}(s)=\sum_{n=0}^\infty a_{p^n}(G)p^{-ns}$$ of $G$ is
rational in $p^{-s}$. He also proved that the `global' zeta function
$\zeta_{G}(s)$ counting all finite-index subgroups is rational in
$p^{-s}, n_1^{-s},\dots,n_{k}^{-s}$ for natural numbers
$n_1,\dots,n_k$ (analogous to Theorem~\ref{theorem jaikin JAMS}), and
established similar results for zeta functions counting normal
subgroups, $r$-generator subgroups and subgroups up to conjugacy in
compact $p$-adic analytic groups. We refer to \cite[Chapter
  16]{LubotzkySegal/03} for details. In~\cite{duS/05} du Sautoy showed
the rationality of certain generating functions enumerating the class
numbers of (i.e.~the total numbers of conjugacy classes in) families
of finite groups associated with compact $p$-adic analytic groups.

\subsubsection{Finite $p$-groups} 
The methods used to study the subgroup growth of nilpotent or $p$-adic
analytic groups have found applications in the enumeration of finite
$p$-groups. Given a prime $p$ and natural numbers $c$ and $d$, let
$f(n,p,c,d)$ denote the number of (isomorphism classes of)
$d$-generator $p$-groups of order $p^n$ and nilpotency class at most
$c$. We define the Dirichlet generating function
$$\zeta_{c,d,p}(s):=\sum_{n=0}f(n,p,c,d)p^{-ns}.$$ In~\cite{duS/02}
du Sautoy proved that these generating series are rational in the parameter
$p^{-s}$ (cf.~\cite[Section~16.4]{LubotzkySegal/03} for an
exposition). It follows easily from the structure theorem for finite
abelian $p$-groups that
\begin{equation}
\zeta_{1,d,p}(s)=\zeta_p(s)\zeta_p(2s)\cdots\zeta_p(ds).\label{formula
count abelian p}
\end{equation} In~\cite{Voll/02} it was proved that, for all primes
$p$,
$$\zeta_{2,2,p}(s)=\zeta_p(s)\zeta_p(2s)\zeta_p(3s)^2\zeta_p(4s).$$ No
other explicit formulae of this kind are known.

\section{Open problems and conjectures}\label{section open problems}

\subsection{Subring and subgroup zeta functions}

%\subsubsection{Nilpotent groups and Lie rings} 
\begin{conjecture}\cite[p.~188]{GSS/88}
Let $F_{c,d}$ denote the free class-$c$-nilpotent group on $d$
  generators. Then $\zeta_{F_{c,d}}(s)$ and $\zeta^\nl_{F_{c,d}}(s)$
  are almost uniform, i.e.~there are rational functions $W_{c,d}(X,Y),
  W^\nl(X,Y)\in\Q(X,Y)$ such that, for almost all primes~$p$,
\begin{align*}
\zeta_{F_{c,d},p}(s)&=W_{c,d}(p,p^{-s})\\
\zeta^\nl_{F_{c,d},p}(s)&=W^\nl_{c,d}(p,p^{-s}).
\end{align*}
\end{conjecture}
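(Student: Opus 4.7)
The plan is to exploit the universality of the free nilpotent Lie ring to make the cone integral machinery of Sections~\ref{subsection counting}--\ref{subsection lhde} uniform in~$p$. By the linearisation principle~(Section~\ref{section linearisation}) together with~\eqref{equation malcev normal subgroups}, for almost all primes $p$ one has $\zeta_{F_{c,d},p}(s)=\zeta_{L_{c,d},p}(s)$ and $\zeta^{\triangleleft}_{F_{c,d},p}(s)=\zeta^{\triangleleft}_{L_{c,d},p}(s)$, where $L_{c,d}$ is the free class-$c$-nilpotent Lie ring on $d$ generators. It therefore suffices to establish almost uniformity for the Lie ring $L_{c,d}$ (in both the subring and ideal variants).

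Fix a Hall basis of $L_{c,d}$ indexed by basic commutators; let $n=\mathrm{rk}_{\Z}(L_{c,d})$. I would then proceed as in Section~\ref{subsection counting}: parametrise full sublattices of $L_{c,d}\otimes\Zp$ by cosets $\mcU M$ with $M\in\trnzp$, and translate the subring condition~\eqref{subalgebra condition} into a finite system of polynomial divisibilities in the entries of $M$, whose coefficients are the structure constants of $L_{c,d}$ in the chosen Hall basis. The crucial feature is that these coefficients are \emph{integers that do not depend on~$p$}. Packaging this as a cone integral and invoking Theorem~\ref{theorem duSG} yields, for almost all $p$,
\begin{equation*}
\zeta_{L_{c,d},p}(s)=\sum_{t\in[m]}c_t(p)\,W_t(p,p^{-s}),
\end{equation*}
with varieties $V_t/\Q$ that arise, via a principalisation of the ideal generated by the subring polynomials, independently of~$p$. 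The ideal-zeta-function version is analogous, after replacing the subring condition by the ideal condition.

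The central task is then to show that, for the universal ring $L_{c,d}$, the varieties $V_t$ can be chosen so that each $c_t(p)$ is a polynomial in~$p$; this would immediately yield almost uniformity with
$W_{c,d}(X,Y)=\sum_t c_t(X)W_t(X,Y)$, and similarly in the ideal case. The strategy I would pursue is to use the natural $\GL_d$-action on $L_{c,d}$ (permuting the generators and extending via the Lie bracket) together with the grading by bracket length to construct a $\GL_d$-equivariant principalisation of the defining ideal on a flag-type homogeneous space. Schubert-style stratifications then force each stratum to be an iterated affine-space fibration, and by the cell-decomposition arguments of Section~\ref{subsection flags} (see Proposition~\ref{proposition binomial length}) the corresponding point counts $c_t(p)$ would be polynomials in~$p$.

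The main obstacle is precisely the construction of such an equivariant principalisation: Hironaka's theorem and its effective refinements such as Theorem~\ref{theorem principalisation} are not known to respect a prescribed reductive group action, so producing one for the (already combinatorially intricate) ideal of subring conditions on $L_{c,d}$ is a serious algebro-geometric problem. An alternative route, less ambitious but also less transparent, would be a direct combinatorial attack: use the Hall-Lazard basis to express the relevant minors of the matrix $\mcR(\bfy)$ in a form amenable to the linear-homogeneous-diophantine machinery of Section~\ref{subsection lhde}, reducing the whole computation to an enumeration of lattice points in finitely many rational polyhedral cones whose structure is governed only by $c$ and $d$. The evidence that the known cases $(c,d)\in\{(2,2),(2,3),(3,2)\}$ are all uniform (indeed, computed explicitly) suggests that this combinatorial approach is feasible, but a genuinely new idea seems required to handle general $(c,d)$.
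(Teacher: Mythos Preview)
The statement you are addressing is presented in the paper as an \emph{open conjecture} (Section~\ref{section open problems}); the paper offers no proof, and indeed none is known. There is therefore nothing to compare your proposal against, and the relevant question is whether your argument actually establishes the conjecture.

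It does not, and to your credit you essentially say so yourself. Everything up to and including the decomposition
\[
\zeta_{L_{c,d},p}(s)=\sum_{t\in[m]}c_t(p)\,W_t(p,p^{-s})
\]
is correct and already contained in Theorem~\ref{theorem duSG}; this holds for \emph{every} ring, free nilpotent or not, and so cannot by itself yield almost uniformity. The entire content of the conjecture is the claim that the $c_t(p)$ may be taken to be polynomials in~$p$, and this is precisely the step where your argument becomes aspirational. A $\GL_d$-equivariant principalisation with Schubert-type strata would be lovely, but no such result is available; the functorial resolution algorithms (e.g.\ Theorem~\ref{theorem principalisation}) are equivariant with respect to smooth group actions only in a weak sense that does not force the exceptional strata to be cellular, and there is no general reason why the reductions mod~$p$ of the resulting $V_t$ should have polynomial point counts. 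Your alternative ``combinatorial'' route via Hall bases and polyhedral cones is exactly what practitioners have attempted in the known cases $(2,2),(2,3),(3,2)$; the difficulty is that beyond these the subalgebra conditions are genuinely non-monomial (compare~\eqref{div condition sl2}), and no mechanism is known that would, for arbitrary $(c,d)$, reduce them to cone conditions without introducing auxiliary varieties of uncontrolled arithmetic.

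In short: your write-up is a reasonable survey of how one might attack the problem, but it is not a proof, and the gap you flag---producing varieties with polynomial point counts---is the whole conjecture.
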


\begin{conjecture}\label{degree conjecture} 
  Let $L$ be a class-$c$-nilpotent Lie ring of rank $n$ with upper
  central series $(Z_i(L))_i$, $i=0,\dots,c$. Set
  $n_i:=\text{rk}(L/Z_i(L))$ (so $n_0=n=\text{rk}(L)$). Then, for
  almost all primes $p$,
\begin{align}\label{degree conjecture I}
\text{deg}_{p^{-s}}(\zeta^\nl_{L,p}(s))&=-\sum_{i=0}^{c}n_i\\
\lim_{s\rightarrow
-\infty}(p^{-s})^{\sum_{i=1}^{c}n_i}\zeta^\nl_{L,p}(s)&=(-1)^np^{\binom{n}{2}}.\label{degree
conjecture II}
\end{align}
\end{conjecture}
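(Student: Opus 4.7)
The plan is to adapt the $p$-adic integral and principalisation machinery of Section~\ref{subsection local funeqs} from subring zeta functions to the ideal setting. Writing $\zeta^\nl_{L,p}(s) = (1-p^{-ns})^{-1}\sum_{[\Lambda]}|L_p:\Lambda_0|^{-s}$, where the sum now ranges over homothety classes of \emph{ideals} of $L_p$, the closure condition~\eqref{subalgebra condition} is replaced by the strictly weaker condition $[L_p,\Lambda]\subseteq\Lambda$. Parametrising $[\Lambda]$ by type $(I,\bfr)$, the ideal analogue of~\eqref{subalgebra conditionII} takes the form $D\mathcal{R}_{(i)}(\alpha)\equiv 0 \pmod{D_{ii}}$, with only a single factor of $D$ on the left, so the resulting analogues $W^\nl_I(p^{-s})$ of~\eqref{ZI formula} are governed by a family of ideals generated by the entries of $\mathcal{R}_{(i)}(\bfy)$; an application of Theorem~\ref{theorem principalisation} to this family should yield an explicit rational expression for each $W^\nl_I(p^{-s})$.

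For Part~\eqref{degree conjecture I}, I would track the $p^{-s}$-degree of each $W^\nl_I(p^{-s})$ through the filtration by the upper central series. Writing $d_j:=n_{j-1}-n_j$ for the rank of the $j$-th central factor $Z_j(L)/Z_{j-1}(L)$, the key combinatorial identity $\sum_{i=0}^c n_i = \sum_{j=1}^c j\,d_j$ should emerge as natural bookkeeping: an index-factor in the $j$-th layer is charged $j$ times, once for $\Lambda$ as a subgroup and once for each absorption relation $[Z_k(L),\Lambda]\subseteq Z_{k-1}(L)\cap\Lambda$ with $k<j$. For Part~\eqref{degree conjecture II}, in nilpotency class~$2$ the leading-order identity follows directly from Voll's functional equation~\cite[Theorem~C]{Voll/06a}; in general, one hopes that the dominant stratum of the cone integral is the "abelianised" one (cf.~Example~\ref{example igusa abelian II}), so that the leading coefficient $(-1)^np^{\binom{n}{2}}$ is inherited unchanged from the abelian local computation~\eqref{formula abelian local}.

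The main obstacle is the absence of a functional equation for $\zeta^\nl_{L,p}(s)$ beyond class~$2$: Woodward's examples (Section~\ref{subsection normal subgroups}) show that ideal zeta functions of higher-class or certain soluble Lie rings can fail such symmetries, so one cannot deduce~\eqref{degree conjecture II} from a global inversion identity of the type~\eqref{equation IP}. The leading coefficient must instead be isolated by proving that every non-abelian stratum $W^\nl_I(p^{-s})$ with $I\neq\varnothing$ contributes to strictly subleading order in $p^{-s}$, uniformly in the Lie-ring structure and (for almost all) $p$. Establishing this uniform subleading bound, which in the class-$2$ case is invisible because it is subsumed by the full inversion properties of Proposition~\ref{proposition IP}, is in my view the principal technical hurdle of the conjecture; controlling the exceptional divisors produced by principalisation on each non-abelian stratum, and showing that their contributions to the leading term of $p^{-s}$ cancel, appears to require genuinely new ideas going beyond the cone-integral bookkeeping available today.
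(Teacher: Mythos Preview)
This statement is labelled a \emph{Conjecture} in the paper, and the paper offers no proof.  The only remark following it is that, whenever the functional equation~\eqref{equation funeq normal} holds (in particular for nilpotency class $c\leq 2$, by~\cite[Theorem~C]{Voll/06a}), both~\eqref{degree conjecture I} and~\eqref{degree conjecture II} drop out as formal corollaries, and that all explicitly computed examples in higher class happen to satisfy them.  So there is nothing to compare your proposal against: you are sketching an attack on an open problem, not reproducing an argument from the paper.

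Read on its own terms, your write-up is candid about this: phrases like ``should yield'', ``one hopes'', and ``appears to require genuinely new ideas'' make clear that what you have is a programme, not a proof.  Your identification of the class-$2$ case with the functional equation, and of the obstruction in higher class with Woodward's counterexamples, matches exactly what the paper says.  The upper-central-series bookkeeping you propose for~\eqref{degree conjecture I} is plausible and is indeed the heuristic behind the conjectured exponent, but it has not been turned into a proof; the ``uniform subleading bound'' you isolate at the end is precisely the missing ingredient that keeps this a conjecture.

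Two small technical slips are worth flagging.  First, the ideal condition $[L_p,\Lambda]\subseteq\Lambda$ is \emph{stronger} than the subring condition $[\Lambda,\Lambda]\subseteq\Lambda$, not weaker: since $\Lambda\subseteq L_p$, absorption by all of $L_p$ implies closure under the bracket.  Second, because the ideal condition is homothety-invariant (unlike the subring condition, which improves under scaling by~$p$), the r\^ole of $\Lambda_0$ in the decomposition $\zeta^{\triangleleft}_{L,p}(s)=(1-p^{-ns})^{-1}\sum_{[\Lambda]}|L_p:\Lambda_0|^{-s}$ is different from the subring case: here $\Lambda_0$ is simply the maximal representative contained in $L_p$, and the sum ranges only over classes whose members are ideals at all.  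This does not invalidate your outline, but the ideal analogue of~\eqref{subalgebra conditionII} is $D\,\alpha^{-1}\mathcal{R}(\alpha[i])\equiv 0\pmod{D_{ii}}$ (cf.\ the exercise at the end of the paper), without the trailing $(\alpha^{-1})^t$ that your $\mathcal{R}_{(i)}(\alpha)$ carries.
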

Note that, for the primes $p$ for which $\zeta_{L,p}(s)$ satisfies a
functional equation of the form
\begin{equation}\label{equation funeq normal}
\zeta^\nl_{L,p}(s)|_{p\rightarrow
  p^{-1}}=(-1)^np^{\binom{n}{2}-s\sum_{i=0}^cn_i}\zeta^\nl_{L,p}(s),
\end{equation}
these are simple corollaries of~\eqref{equation funeq normal}. In
particular, Conjecture~\ref{degree conjecture} holds if $c\leq 2$
(cf.~\cite[Theorem C]{Voll/06a}). For higher classes, however, it is
known that the equation~\eqref{equation funeq normal} does not hold in
general. All known examples (cf., e.g., \cite{duSautoyWoodward/08})
nevertheless satisfy equations~\eqref{degree conjecture I} and
\eqref{degree conjecture II}.

\begin{problem} Characterise nilpotent Lie rings for which the functional
  equation~\eqref{equation funeq normal} holds for almost all
  primes~$p$.
\end{problem}

A `conjectural' characterisation has been given
in~\cite[Chapter~4]{duSautoyWoodward/08}. 

\begin{conjecture} 
Let $L$ be a class-$2$-nilpotent Lie ring with $\text{rk}(L/L')=d$, $\text{rk}(Z(L))=m$ and $\text{rk}(L/Z(L))=r$. Let $\alpha^\nl$ denote the abscissa of
  convergence of $\zeta^\nl_L(s)$. Then
  $$\alpha^\nl=\max_{k\in[m]}\left\{d,\frac{k(m+d-k)+1}{r+k}\right\}.$$
\end{conjecture}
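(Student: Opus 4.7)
The plan is to reduce $\zeta^\nl_{L,p}(s)$ to a sum over pairs of lattices. Every finite-index ideal $H \leq L_p$ determines the pair
$$(\bar H, K) := ((H+Z(L)_p)/Z(L)_p,\; H \cap Z(L)_p),$$
a full-rank sublattice $\bar H$ of $L_p/Z(L)_p$ together with a full-rank sublattice $K$ of $Z(L)_p$, constrained by the \emph{ideal condition} $[\bar H, L_p] \subseteq K$ (which makes sense because $[L_p,L_p]=L'_p \subseteq Z(L)_p$ by class-$2$-nilpotency). Conversely, every such pair is realised by exactly $|\mathrm{Hom}(\bar H, Z(L)_p/K)|=|Z(L)_p/K|^r$ ideals, and $|L_p:H|=|L_p/Z(L)_p:\bar H|\cdot |Z(L)_p:K|$. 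This yields the master identity
$$\zeta^\nl_{L,p}(s) \;=\; \sum_{(\bar H, K):\,[\bar H,L_p]\subseteq K}|L_p/Z(L)_p:\bar H|^{-s}\,|Z(L)_p:K|^{r-s},$$
reducing the computation of $\alpha^\nl$ to a lattice-counting problem with the commutator constraint.

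For the lower bound, I would first note that every sublattice of $L_p$ containing $L'_p$ is automatically an ideal (since $[L_p,L_p]=L'_p$), and such lattices are in bijection with sublattices of $L_p/L'_p \cong \Zp^d$; their contribution is therefore $\zeta_{\Zp^d}(s)$, with abscissa $d$. For each $k\in[m]$, I would construct an explicit stratum realising the exponent $\frac{k(m+d-k)+1}{r+k}$ by restricting to pairs where $Z(L)_p/K$ has $p$-rank exactly $k$ with elementary divisors of common scale $e$, so that $|Z(L)_p:K|=p^{ek}$. Counting such $K$'s by Hall-type enumeration ($\approx p^{ek(m-k)}$), the extensions of $K\cap L'_p$ inside $Z(L)_p$ ($\approx p^{ek(d-r)}$), the number of ideals per pair ($p^{ekr}$), and summing the resulting geometric series in $e$ (which produces the $+1$ in the numerator), one expects a subseries whose abscissa of convergence is exactly $\frac{k(m+d-k)+1}{r+k}$.

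For the upper bound, the plan is to encode $\zeta^\nl_{L,p}(s)$ as a $p$-adic integral in the style of Section~\ref{subsection counting}, with variables recording the elementary-divisor data of $(\bar H, K)$ and with an integrand controlled by the minors of the commutator matrix $\mcR(\bfy)$ of $L$. The abscissa of convergence then becomes the solution of a piecewise-linear optimisation problem on the Newton polytope of the integrand. The optimum is attained on a face indexed by the $p$-rank $k$ of $Z(L)_p/K$, matching the maximisation over $k \in [m]$ in the conjecture, while the universal contribution $d$ appears from the face corresponding to $K \supseteq L'_p$.

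The hard part will be the upper bound. While the integral formulation is standard and the lower bound is essentially combinatorial, closing the gap requires uniform control over $\mathrm{Rad}(b_K)$ as $K$ varies, in particular over the degeneration of the alternating pairing $b_K \colon L_p/Z(L)_p \times L_p/Z(L)_p \to Z(L)_p/K$. Because the polynomial conditions defining these degeneracy loci are not monomial, the cone-integral machinery of Section~\ref{subsection lhde} does not apply directly; instead one would need an explicit principalisation (Theorem~\ref{theorem principalisation}) of the ideal of appropriate minors of $\mcR(\bfy)$, and would have to treat the regimes $k \leq d-r$ (where $K \supseteq L'_p$ is possible among the dominant configurations) and $k > d-r$ separately.
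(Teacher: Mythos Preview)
This statement is listed in the paper as a \emph{conjecture}, not a theorem; the paper contains no proof. Immediately after stating it, the paper records the state of the art: the inequality $\alpha^\nl\geq\max_{k\in[m]}\{d,\frac{k(m+d-k)+1}{r+k}\}$ was established by Paajanen, and equality is known only in special cases (notably the free class-$2$-nilpotent groups $F_{2,d}$). So there is nothing in the paper to compare your proposal against; you are attempting an open problem.

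Your master identity and the lower-bound sketch are essentially correct and in the spirit of Paajanen's argument: the pair decomposition $(\bar H,K)$ with the multiplicity $|Z(L)_p/K|^r$ is the standard way to organise $\zeta^\nl_{L,p}(s)$ for class~$2$, and the strata you describe do produce subseries with the claimed abscissae. That part is fine, but it is also the part already known.

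The genuine gap is exactly where you place it, and your plan does not close it. Writing $\zeta^\nl_{L,p}(s)$ as a cone-type $p$-adic integral is routine; the difficulty is that after principalisation the abscissa of the global Euler product is governed not by a clean Newton-polytope optimisation but by the numerical data $(N_i,\nu_i)$ attached to the exceptional divisors of a resolution that you have no explicit control over for a general class-$2$ Lie ring $L$. There is no reason these data should depend on $L$ only through the integers $d,m,r$; the commutator matrix $\mcR(\bfy)$ can have arbitrarily complicated degeneracy loci (indeed, du~Sautoy's elliptic-curve example in Section~\ref{subsection normal subgroups} already shows nontrivial geometry entering for fixed $(d,m,r)$). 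Your assertion that ``the optimum is attained on a face indexed by the $p$-rank $k$'' is precisely the content of the conjecture, not something the integral formulation hands you for free. Until one can show that the geometric contributions from the degeneracy strata of $\mcR(\bfy)$ never push the abscissa above the stated maximum, the upper bound remains open.
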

That $\alpha^\nl$ is greater or equal to the right hand side was
proved in~\cite{Paajanen/07}. Equality has been proved, in particular,
for the free class-$2$-nilpotent groups $F_{2,d}$
in~\cite{Voll/05a}. More generally, we ask

\begin{problem}\label{problem abscissae} 
Given a ring $L$, determine the abscissae of convergence of its
subring and ideal zeta functions, respectively.
\end{problem}

\begin{problem}\label{problem poles} 
Given a ring $L$, determine (a small superset of) the natural numbers
  $a_i,b_i$ occurring in the denominators of its local (ideal) zeta
  functions~(cf.~Theorem~\ref{theorem duS denominators}).
\end{problem}

It follows from~\cite{duSG/00} that the abscissa of convergence of a
ring's global zeta function is a simple function of these
integers. Problem~\ref{problem poles} is thus strictly harder than
Problem~\ref{problem abscissae}. Even (partial) answers for specific
families of Lie rings as nilpotent or soluble Lie rings or `simple'
Lie rings like $\mathfrak{sl}_n(\Z)$ would be very interesting.

\subsection{Representation zeta functions}

%%%%%\subsubsection{Nilpotent groups} 
\begin{problem}(\cite[Problem 4.2]{LarsenLubotzky/08}) 
Characterise rigid groups, and groups of polynomial representation
growth (PRG).
\end{problem}

\begin{problem}
Let $G$ be a $\T$-group with representation zeta function
$\zirr_G(s)$. Is the abscissa of convergence of $\zirr_G(s)$ a
rational number? Does $\zirr_G(s)$ admit analytic continuation beyond
its abscissa of convergence?  Interpret the abscissa of convergence
and the poles of the Euler factors of $\zirr_G(s)$ in terms of the
structure of $G$.
\end{problem}

%(cf.~\eqref{definition nilpotent rep zeta function})

A positive answer to this problem would imply asymptotic
statements about the numbers of twist-isoclasses of representations of
$\T$-groups, analogous to Part~B of Theorem~\ref{theorem duSG
analytic}. %Note that it is not evident that the techniques
%from~\cite{duSG/00} are applicable, as it is not known how to describe
%local representation zeta functions of $\T$-groups as `cone
%integrals'.

%\subsubsection{Compact $p$-adic analytic groups}

\begin{problem}
  Let $\SLn^k(\Zp):=\ker(\SLn(\Zp)\rightarrow\SLn(\Z/(p^k\Z))$ denote
  the $k$-th congruence subgroup of $\text{SL}_n(\Zp)$. How do the
  functions $\zirr_{\SLn^k(\Zp)}(s)$ vary with the prime~$p$? What are
  the abscissae of convergence of the zeta functions
  $\zirr_{\SLn^k(\Z)}(s)$? What about other `classical' $p$-adic
  analytic groups?
\end{problem}

\begin{problem}
Let $G$ be an arithmetic group satisfying the CSP. Does its representation
zeta function $\zirr_G(s)$ admit analytic continuation beyond its
rational (Theorem~\ref{theorem avni}) abscissa of convergence?
\end{problem}

Again, a positive answer would give us control over the asymptotic of
the numbers $r_n(G)$ as $n$ tends to infinity.

\section{Exercises}

\begin{exercise}
Let $q$ be a prime power, $n\in\N$, and $I\subseteq[n-1]$. Show that
the number of flags of type $I$ in $\Fp^n$ is equal to
$\binom{n}{I}_q$.
\end{exercise}

\begin{exercise} 
Prove equation~\eqref{generating function heisenberg} directly.
\end{exercise}

\begin{exercise}(cf.~Example~\ref{example heisenberg normal}) 
Let $p$ be a prime, and $L_p=\Zp\otimes L$, where $L$ is the
Heisenberg Lie ring. In the setup of Section~\ref{subsection local
funeqs}, show that a coset $\Gamma M$ corresponds to an ideal if and
only if $M_{33}\mid M_{11}$ and $M_{33}\mid M_{22}$. Deduce that, for
all primes,
$$\zeta^\triangleleft_{L_p}(s)=\sum_{H\triangleleft_fL_p}|L_p:H|^{-s}=\frac{1}{(1-p^{-s})(1-p^{1-s})(1-p^{2-3s})}.$$
\end{exercise}

\begin{exercise} [$\star$]
Let $L$ be a ring of additive rank~$n$. Using the setup and notation
of Section~\ref{subsection counting}, show that a matrix
$M=(M_{ij})\in\text{Tr}_3(\Zp)$ encodes the generators of an
\emph{ideal} if and only if
\begin{equation*}
\forall i\in[n]:\; D \alpha^{-1}\mcR(\alpha[i])\equiv 0 \mod D_{ii},
\end{equation*}
(This is the `ideal'-analogue of equation~\eqref{subalgebra conditionII}.)
\end{exercise}

\begin{exercise} [$\star$] For $n\in\N$, let $L(n)=\Z^n$, considered as a ring with component-wise multiplication. Show that, for all primes $p$,
\begin{equation*}
\zeta_{L(2),p}(s)=\frac{(1+p^{-s})^2}{(1-p^{-s})(1-p^{1-3s})}.
\end{equation*}
Show that, for all $n\in\N$ and all primes $p$,
$$\zeta_{L(n),p}(s)=\zeta_p(s)^n,$$
where $\zeta_p(s)=(1-p^{-s})^{-1}$.
\end{exercise}

\begin{exercise} [$\star$]
Let $G$ be the group defined in Example~\ref{example nonuniform}. Show that, for $p\neq2$, 
$$\zirr_{G,p}(s)=W_1(p,p^{-s}) + b(p) W_2(p,p^{-s}),$$
where 
\begin{equation*}
W_1(X_1,X_2)=\frac{1-X_2^{3}}{1-X_1^3X_2^3},\quad
W_2(X_1,X_2)=\frac{(X_1-1)(X_2-1)X_2^2}{(1-X_1^2X_2^2)(1-X_1^3X_2^3)}
\end{equation*}
and $b(p)$ is defined as in Example~\ref{example ec
projective}. Deduce the assertion of Theorem~\ref{theorem D} in these
cases.
\end{exercise}

\begin{exercise} Establish formula~\eqref{formula count abelian p}.
\end{exercise}

\begin{acknowledgements} 
I am indebted to Mark Berman, Benjamin Klopsch and Alexander
Stasinski, whose careful comments greatly improved these notes.
\end{acknowledgements}

\bibliographystyle{amsplain}
\providecommand{\bysame}{\leavevmode\hbox to3em{\hrulefill}\thinspace}
\providecommand{\MR}{\relax\ifhmode\unskip\space\fi MR }
% \MRhref is called by the amsart/book/proc definition of \MR.
\providecommand{\MRhref}[2]{%
  \href{http://www.ams.org/mathscinet-getitem?mr=#1}{#2}
}
\providecommand{\href}[2]{#2}

%\bibliography{masterbibliography08dec08.bib}

\begin{thebibliography}{10}

\bibitem{Avni/08}
N.~Avni, \emph{{Arithmetic groups have rational representation growth}},
  arXiv:0803.1331, 2008.

\bibitem{Berman/07}
M.~Berman, \emph{{Uniformity and functional equations for zeta functions of
  $\mathbb{Q}$-split algebraic groups}}, preprint, 2007.

\bibitem{CurtisReiner-methods/81}
C.~W. Curtis and I.~Reiner, \emph{Methods of representation theory, with
  applications to finite groups and orders}, vol.~1, {John Wiley \& Sons},
  1981.

\bibitem{duS/93}
M.~P.~F. du~Sautoy, \emph{Finitely generated groups, {$p$}-adic analytic groups
  and {P}oincar\'e series}, Ann. of Math. (2) \textbf{137} (1993), no.~3,
  639--670.

\bibitem{duS/94}
\bysame, \emph{Zeta functions of groups and rings: uniformity}, Israel J. Math.
  \textbf{86} (1994), 1--23.

\bibitem{duS/02}
\bysame, \emph{Counting $p$-groups and nilpotent groups}, Publ. Math. I.H.E.S.
  \textbf{92} (2000), 63--112.

\bibitem{duS-ecI/01}
\bysame, \emph{A nilpotent group and its elliptic curve: non-uniformity of
  local zeta functions of groups}, Israel J. Math. \textbf{126} (2001),
  269--288.

\bibitem{duS-ecII/01}
\bysame, \emph{Counting subgroups in nilpotent groups and points on elliptic
  curves}, J. Reine Angew. Math. \textbf{549} (2002), 1--21.

\bibitem{duS-ennui/02}
\bysame, \emph{Zeta functions of groups: The quest for order versus the flight
  from ennui}, Groups St. Andrews 2001 in Oxford, London Math. Soc. Lecture
  Note Ser., 304, Cambridge University Press, 2003, pp.~150--189.

\bibitem{duS/05}
M.~P.~F. du~Sautoy, \emph{Counting conjugacy classes}, Bull. London Math. Soc.
  \textbf{37} (2005), no.~1, 37--44.

\bibitem{duSG/00}
M.~P.~F. du~Sautoy and F.~J. Grunewald, \emph{Analytic properties of zeta
  functions and subgroup growth}, Ann. of Math. \textbf{152} (2000), 793--833.

\bibitem{duSLubotzky/96}
M.~P.~F. du~Sautoy and A.~Lubotzky, \emph{Functional equations and uniformity
  for local zeta functions of nilpotent groups}, Amer. J. Math. \textbf{118}
  (1996), no.~1, 39--90.

\bibitem{duSSegal/00}
M.~P.~F. du~Sautoy and D.~Segal, \emph{Zeta functions of groups}, New horizons
  in pro-$p$ groups, Progr. Math., Birkh\ae user, Boston MA, 2000,
  pp.~249--286.

\bibitem{duSTaylor/02}
M.~P.~F. du~Sautoy and Gareth Taylor, \emph{The zeta function of
  {$\mathfrak{sl}_2$} and resolution of singularities}, Math. Proc. Cambridge
  Philos. Soc. \textbf{132} (2002), no.~1, 57--73.

\bibitem{duSautoyWoodward/08}
M.~P.~F. du~Sautoy and L.~Woodward, \emph{{Zeta functions of groups and
  rings}}, Lecture Notes in Mathematics 1925, {Springer Verlag}, 2008.

\bibitem{GSS/88}
F.~J. Grunewald, D.~Segal, and G.~C. Smith, \emph{Subgroups of finite index in
  nilpotent groups}, Invent. Math. \textbf{93} (1988), 185--223.

\bibitem{Hironaka/64}
H.~Hironaka, \emph{Resolution of singularities of an algebraic variety over a
  field of characteristic zero. {I}, {II}}, Ann. of Math. (2) 79 (1964),
  109--203; ibid. (2) \textbf{79} (1964), 205--326.

\bibitem{Howe-nilpotent/77}
R.~E. Howe, \emph{On representations of discrete, finitely generated,
  torsion-free, nilpotent groups}, Pacific J. Math. \textbf{73} (1977), no.~2,
  281--305.

\bibitem{HrushovskiMartin/07}
E.~Hrushovski and B.~Martin, \emph{Zeta functions from definable equivalence
  relations}, math.LO/0701011 on arxiv.org, 2007.

\bibitem{Humphreys/90}
J.~E. Humphreys, \emph{Reflection groups and {C}oxeter groups}, Cambridge
  Studies in Advanced Mathematics, vol.~29, Cambridge University Press,
  Cambridge, 1990.

\bibitem{IrelandRosen/82}
K.~Ireland and M.~Rosen, \emph{A classical introduction to modern number
  theory}, GTM 84, Springer, 1982.

\bibitem{Jaikin/06}
A.~Jaikin-Zapirain, \emph{{Zeta function of representations of compact $p$-adic
  analytic groups}}, J. Amer. Math. Soc. \textbf{19} (2006), no.~19, 91--118.

\bibitem{Klopsch/03}
B.~Klopsch, \emph{{Zeta functions related to the pro-$p$ group
  $SL_1(\Delta_p)$}}, Math. Proc. Cambridge Philos. Soc. \textbf{135} (2003),
  45--57.

\bibitem{KlopschVoll/07}
B.~Klopsch and C.~Voll, \emph{{Zeta functions of $3$-dimensional $p$-adic Lie
  algebras}}, arXiv:0710.1970v1, to appear in Math. Z., 2007.

\bibitem{KlopschVoll/08}
\bysame, \emph{{Representation zeta functions of compact $p$-adic Lie groups}},
  in preparation, 2008.

\bibitem{LarsenLubotzky/08}
M.~Larsen and A.~Lubotzky, \emph{Representation growth of linear groups}, J.
  Eur. Math. Soc. (JEMS) \textbf{10} (2008), no.~2, 351--390.

\bibitem{LubotzkyMagid/85}
A.~Lubotzky and A.~R. Magid, \emph{Varieties of representations of finitely
  generated groups}, Mem. Amer. Math. Soc. \textbf{58} (1985), no.~336, xi+117
  pp.

\bibitem{LubotzkyMannSegal/93}
A.~Lubotzky, A.~Mann, and D.~Segal, \emph{Finitely generated groups of
  polynomial subgroup growth}, Israel J. Math. \textbf{82} (1993), no.~1-3,
  363--371.

\bibitem{LubotzkyMartin/04}
A.~Lubotzky and B.~Martin, \emph{{Polynomial representation growth and the
  congruence subgroup growth}}, Israel J. Math. \textbf{144} (2004), 293--316.

\bibitem{LubotzkySegal/03}
A.~Lubotzky and D.~Segal, \emph{Subgroup growth}, Birkh\"{a}user Verlag, 2003.

\bibitem{NunleyMagid/89}
C.~Nunley and A.~R. Magid, \emph{{Simple representations of the integral
  Heisenberg group}}, Contemp. Math. \textbf{82} (1989), 89--96.

\bibitem{Paajanen/07}
P.~M. Paajanen, \emph{{On the degree of polynomial subgroup growth in
  class-$2$-nilpotent groups}}, Israel J. Math. \textbf{157} (2007), 323--332.

\bibitem{Stanley/96}
R.~P. Stanley, \emph{Combinatorics and commutative algebra}, Birkh\"{a}user,
  1996, second edition.

\bibitem{Stanley/97}
\bysame, \emph{Enumerative combinatorics}, Cambridge Studies in Advanced
  Mathematics, 49, vol.~1, Cambridge University Press, 1997.

\bibitem{Voll/02}
C.~Voll, \emph{{Zeta functions of groups and enumeration in Bruhat-Tits
  buildings}}, Ph.D. thesis, University of Cambridge, 2002.

\bibitem{Voll/04}
\bysame, \emph{{Zeta functions of groups and enumeration in Bruhat-Tits
  buildings}}, Amer. J. Math. \textbf{126} (2004), 1005--1032.

\bibitem{Voll/05}
\bysame, \emph{{Functional equations for local normal zeta functions of
  nilpotent groups}}, Geom. Func. Anal. (GAFA) \textbf{15} (2005), 274--295,
  with an appendix by A. Beauville.

\bibitem{Voll/05a}
\bysame, \emph{{Normal subgroup growth in free class-$2$-nilpotent groups}},
  Math. Ann. \textbf{332} (2005), 67--79.

\bibitem{VollBLMS/06}
\bysame, \emph{{Counting subgroups in a family of nilpotent semidirect
  products}}, Bull. London Math. Soc. \textbf{38} (2006), 743--752.

\bibitem{Voll/06a}
\bysame, \emph{{Functional equations for zeta functions of groups and rings}},
  arXiv:math/0612511, to appear in Ann. of Math., 2006.

\bibitem{Wlodarczyk/05}
J.~Wlodarczyk, \emph{{Simple Hironaka resolution in characteristic zero}}, J.
  Amer. Math. Soc. \textbf{18} (2005), no.~4, 779--822.

\end{thebibliography}

\end{document}